%% file: main.tex
\documentclass[11pt]{article}
\usepackage[margin=1in]{geometry}
\usepackage{amsmath,amssymb,amsthm}
\usepackage{float}
\usepackage{hyperref}
\usepackage{microtype}
\usepackage{tikz}
\hypersetup{colorlinks=true,linkcolor=blue,citecolor=blue,urlcolor=blue}
\newtheorem{theorem}{Theorem}
\newtheorem{lemma}[theorem]{Lemma}
\newtheorem{proposition}[theorem]{Proposition}
\theoremstyle{definition}
\newtheorem{definition}{Definition}[section]
\theoremstyle{remark}
\newtheorem*{remark}{Remark}
\theoremstyle{plain}
\newcommand{\E}{\mathbb E}
\newcommand{\Prob}{\mathbb P}
\newcommand{\F}{\mathbb F}
\newcommand{\R}{\mathbb R}
\newcommand{\one}{\mathbf 1}
\DeclareMathOperator{\tr}{tr}

\DeclareMathOperator{\diag}{diag}
\DeclareMathOperator{\cum}{cum}
\newcommand{\norm}[1]{\left\lVert#1\right\rVert_2}
\title{Subspace embeddings with the rerandomized SRHT}
\author{Yuning Yang\thanks{School of Mathematics, Guangxi University, Nanning 530004, China. Email: yyang@gxu.edu.cn}}
\date{}
\begin{document}
\maketitle

\begin{abstract}
This work studies subspace embeddings obtained by two normalized real Walsh
transforms, two independent sign diagonals, and uniform coordinate
sampling without replacement. The main result shows that the prescribed sample size
$k=\min\{n,\lceil Cr/\varepsilon^2\rceil\}$, for a universal constant $C$,
suffices to preserve all squared norms on each fixed $r$-dimensional
subspace within $1\pm\varepsilon$ with probability at least $0.99$.
The result holds for every ambient Walsh dimension and all ranks,
and answers Problem TR-01 in the Open Problems in Numerical Linear
Algebra repository.
The proof controls joint entry cumulants of the transformed projection
through connected graph contractions and Walsh character identities.
These estimates then bound the expected trace of even powers of a product of
centered projections. A two-projection decomposition converts this
estimate into control of both spectral edges. Bernoulli sampling at arbitrary
densities and a deterministic upper bound near full sampling yield the
prescribed number of coordinates.
\end{abstract}

\section{Introduction and main result}\label{sec:introduction}

An oblivious subspace embedding reduces dimension while approximately
preserving all norms on a fixed subspace. Its distribution is chosen
without knowing that subspace. Such maps provide a basic tool for
randomized least-squares and low-rank approximation algorithms
\cite{hmt,boutsidis_gittens}. For an $r$-dimensional subspace and fixed
failure probability, Gaussian maps achieve squared-norm error
$\varepsilon$ with $O(r/\varepsilon^2)$ output coordinates
\cite[Section~1]{nelson_nguyen}.
Fast structured maps seek the same geometric guarantee with less
work when applying the map.

The subsampled randomized Hadamard transform (SRHT) applies random
signs, a normalized Hadamard transform, and coordinate sampling.
The fast Walsh--Hadamard transform permits multiplication in
$O(n\log(2n))$ operations, compared with $O(nk)$ for a dense
$n$-by-$k$ map. Its one-round construction has a logarithmic
obstruction: the coupon-collector construction of
Halko, Martinsson, and Tropp for the Fourier transform
\cite[Remark~11.2]{hmt}, in Tropp's Walsh formulation
\cite[Section~3.3]{tropp_srht}, uses an $r$-dimensional subspace in ambient dimension $r^2$
and requires a sample of order $r\log r$ even for injectivity.
The example concerns the traditional one-round law and motivates
asking what a further randomization can accomplish.

Problem TR-01 in the Open Problems in Numerical Linear Algebra repository
\cite{tr01} asks whether a second independent sign diagonal and Hadamard
transform yield dimension proportional to $r/\varepsilon^2$ at a prescribed
sample size. This work answers that question affirmatively for uniform
sampling without replacement. The normalization and randomness used
throughout the paper are specified first.

Let \(n=2^m\), \(m\ge0\), and identify the coordinate set with
\(K=(\F_2)^m\). The real normalized Walsh matrix is
\[
 H_{ab}=n^{-1/2}(-1)^{\langle a,b\rangle},\qquad H^T=H,\quad H^2=I_n.
\]
Let \(D_1,D_2\) have mutually independent uniform Rademacher diagonal
entries. Independently let \(J\) be a uniformly chosen \(k\)-element
subset of \(K\), and let \(S_J\) collect its coordinate vectors. Put
\[
 \Omega_{n,k}=\sqrt{\frac nk}\,D_1HD_2HS_J.
\]
For a deterministic \(n\times r\) frame \(V\), \(V^TV=I_r\), write
\[
 X=HD_2HD_1V,\qquad P=XX^T,\qquad \delta=\frac rn.
\]
Thus \(X^TX=I_r\), and the Gram matrix in question is
\begin{equation}\label{eq:sampled-gram}
 V^T\Omega_{n,k}\Omega_{n,k}^TV
   =\frac nk X^T S_JS_J^T X.
\end{equation}
Here and below $\norm{\cdot}$ denotes the Euclidean operator norm.

\begin{theorem}\label{thm:main}
There is a universal constant \(C\ge1\) such that for every power of
two \(n\), every \(1\le r\le n\), and every \(0<\varepsilon<1\), the
prescribed integer
\begin{equation}\label{eq:prescribed-width}
 k=\min\{n,\lceil Cr/\varepsilon^2\rceil\}
\end{equation}
satisfies
\begin{equation}\label{eq:main-bound}
 \sup_{V^TV=I_r}
 \Prob\!\left\{\norm{V^T\Omega_{n,k}\Omega_{n,k}^TV-I_r}
                      >\varepsilon\right\}\le0.01.
\end{equation}
\end{theorem}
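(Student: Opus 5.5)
The plan follows the abstract's outline: a high-moment trace method applied to products of centered projections. Let $Q=S_JS_J^T$ denote the coordinate projection onto $J$, and write $\rho=k/n$. The sampled Gram matrix is $\rho^{-1}X^TQX$, so the claim is equivalent to bounding the spectrum of $PQP$ on $\operatorname{range}(P)$ within $\rho(1\pm\varepsilon)$.

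I would first replace sampling without replacement by Bernoulli sampling. Let $Q_\rho=\diag(\xi_a)$ with i.i.d.\ $\xi_a\sim\mathrm{Bernoulli}(\rho)$. Conditioning on $|J|$ and using monotonicity of $Q$ in $J$, a standard Poissonization argument transfers tail bounds, at the cost of adjusting $\rho$ by a factor $1\pm\varepsilon/4$. The regime $k$ close to $n$ is handled deterministically: if $k\ge n(1-\varepsilon/2)$, then $\frac nk X^TQX\preceq \frac nk I$ gives the upper edge directly. The lower edge there is a rank perturbation controlled by $\|X^T(I-Q)X\|\le\|(I-Q)X\|^2$. That quantity is small whenever $n-k$ coordinates carry little mass, which I would again verify by the moment bound below applied with $1-\rho$ in place of $\rho$. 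This gives uniformity over all densities.

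For the core estimate, I would center both projections: $\bar P=P-\delta I$ and $\bar Q=Q_\rho-\rho I$. Then
\[
PQP-\rho P=P\bar QP,
\]
and the goal becomes $\E\tr(P\bar QP)^{2p}\le r\,(c\sqrt{\rho\delta}+\dots)^{2p}$ with $p\asymp\log r$, or even constant $p$ given the failure level $0.01$. Expanding $\tr(P\bar Q)^{2p}$ over indices gives a sum of products of entries $P_{a_ib_i}$ times $\E\prod\bar\xi$. The Bernoulli moments factor over distinct indices, and each block forces coincidences among indices with a weight of order $\rho$ per block. The key input is therefore a bound on the joint cumulants of the entries of $P$ under the double randomization $X=HD_2HD_1V$. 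I would show that a cumulant of order $\ell$ of entries $P_{a_1b_1},\dots$ is bounded by $n^{-(\ell-1)}$ times a connected-graph contraction of $VV^T$. Averaging over $D_1$ pairs the signs, and averaging over $D_2$ turns $H D_2 H$ into a sum over Walsh characters. The Walsh identity $\sum_c(-1)^{\langle a+b,c\rangle}=n\one_{a=b}$ then collapses the sums, with each surviving graph bounded via $\tr (VV^T)^j=r$. This is the main obstacle. The double sign randomization is exactly what kills the coupon-collector coherence, so the leverage scores become uniformly of order $\delta$ in all joint moments rather than only marginally, and the bookkeeping of connected contractions must produce constants at most $C^{\ell}\ell!$ so that the moment method closes.

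With the moment bound in hand, Markov's inequality gives
\[
\|P\bar QP\|\le c\left(\sqrt{\rho\delta}+\delta+\rho\delta\right)
\]
with probability $0.99$. The two-projection decomposition, comparing $P\bar QP$ on $\operatorname{range}P$ and using the analogous bound with $Q$ and $I-Q$ swapped, yields control of both the upper and lower spectral edges. Dividing by $\rho$ gives error $c\sqrt{\delta/\rho}=c\sqrt{r/k}\le\varepsilon$ once $C\ge c^2$.
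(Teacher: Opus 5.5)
\textbf{The core moment estimate is missing.} You flag the bound $\E\tr(P\bar QP)^{2p}\le r(c\sqrt{\rho\delta})^{2p}$, with $c$ independent of $p\asymp\log r$, as the main obstacle. That bound is essentially the whole theorem, and your sketch would not reach it, because the difficulty is the number of terms rather than the size of individual cumulants. Already at leading dimensional order there are $(2p-1)!!$ equality patterns in which each of $p$ labels is visited twice. Each can be combined with any of the $(2p-1)!!$ pairings of its $2p$ entries into covariance blocks. If all you know is $|\cum(P_{ab},P_{cd})|\lesssim r/n^2$, each combination is worth $n^p\rho^p(r/n^2)^p=(\rho\delta)^p$, so the triangle inequality gives $((2p-1)!!)^2(\rho\delta)^p$. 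That only yields $\|P\bar QP\|\lesssim p\sqrt{\rho\delta}$, i.e.\ $k\gtrsim r\log^2 r/\varepsilon^2$ at $p\asymp\log r$, which is exactly the logarithmic loss the theorem removes.

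The paper kills these terms with a selection rule. Since $C_sPC_s$ has the law of $P$ for every Walsh character diagonal $C_s$, a joint cumulant of entries vanishes unless $\sum_f(i_f+j_f)=0$ in $\F_2^m$. The Walsh sums you mention are where such a rule comes from, but it must be isolated as a vanishing condition on the outer labels and fed into the count. Then a pair block of adjacent non-parallel edges is impossible for injective labels. Pairs of disjoint edges impose binary constraints whose rank $h$ costs $n^{-h}$. Every vertex meeting an odd-multiplicity edge must be paid for by a lost cumulant block or a constraint, via $t\le12d+4h$.

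Likewise, cumulant constants $C^\ell\ell!$ do not suffice by themselves. One block containing all $2p$ entries carries $(2p)!$, and it is affordable only because it loses $p-1$ blocks, i.e.\ a factor $r^{-(p-1)}$. The paper trades each lost label, block, or constraint ($\kappa^{-1},r^{-1},n^{-1}$) against combinatorial growth $L^{O(1)}$, $L=4p+1$. This works only when $r\ge2(4p+1)^{1000}$, so you also need a separate bounded-rank argument; the paper uses $\E\|G-I_r\|_F^2\le r(r+1)/k$, and your plan has no such step. Constant $p$ cannot substitute for this, because of the prefactor $r$ in your trace bound.

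\textbf{Secondary points.} As written, your cumulant scaling is off by a factor of $n$. The correct size is $|\cum(P_{i_1j_1},\ldots,P_{i_\ell j_\ell})|\le(2\ell)^{12\ell}r\,n^{-\ell}$ (compare $\E P_{ii}=r/n$). With $r\,n^{-(\ell-1)}$, the pair terms alone would give $(r\rho)^p$.

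The contractions produced by the sign expansion are general connected even-degree graphs with $H$ and $VV^T$ edges, not cycles, so $\tr(VV^T)^j=r$ does not bound them. The paper factors one $VV^T$ edge through $\R^r$ and applies the Mingo--Speicher input--output norm bound.

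Using the symmetric $P\bar QP$ is a legitimate alternative. Since $\|P\bar QP\|=\max_i|\lambda_i-\rho|$ controls both edges at once, you need neither a two-projection argument nor a $Q\leftrightarrow I-Q$ swap. The paper needs its two-projection transfer only because it also centers $P$, which makes $RW$ nonsymmetric with possibly negative even traces. The price of your choice is that uncentered $P$ puts singleton loop cumulants $\E P_{ii}=\delta$ back into the expansion, and the count must be redone with them.

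Finally, near full sampling the complementary-sample bound is unnecessary, and it would require the regime $n-k<r$. The lower Bernoulli density $(1-\varepsilon/4)k/n$ is always below one, so the lower coupling works for every $k<n$. Only the upper edge needs the deterministic bound $(n/k)X^TS_JS_J^TX\preceq(n/k)I_r$.
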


TR-01 states the prescribed-width question for $0<\varepsilon<1/2$;
Theorem~\ref{thm:main} proves the slightly stronger range $0<\varepsilon<1$.

\begin{remark}[Relation to the workshop formulation]
Problem~5.6 in the Simons workshop collection
\cite[Definitions~5.1 and~5.3, Problem~5.6]{workshop} asks whether the
rerandomized SRHT is an oblivious subspace embedding with dimension
$k=O(r/\varepsilon^2)$. TR-01 makes the width
$k=\min\{n,\lceil Cr/\varepsilon^2\rceil\}$, normalization, sampling
without replacement, and success probability $0.99$ explicit.
The workshop assumption $n=\Omega(\log r)$ imposes no additional
restriction here, since $\log r\le r\le n$.

An earlier version of this work answered the workshop question by
establishing the existence of a suitable width $k=O(r/\varepsilon^2)$.
The proof and its Lean~4 formalization are available in the
\href{https://github.com/yuningyang19/rerandSRHT_prob_5_6_simons_workshop/tree/5fb87c321a65baeaac09c79458a7356944647116}{earlier companion repository}.
Theorem~\ref{thm:main} strengthens that result by establishing the
guarantee at the prescribed width
$k=\min\{n,\lceil Cr/\varepsilon^2\rceil\}$, thereby resolving TR-01.
\end{remark}

For a fixed frame $V$, the event complementary to that in
\eqref{eq:main-bound} says precisely that
\[
 (1-\varepsilon)\|z\|_2^2
 \le \|\Omega_{n,k}^T Vz\|_2^2
 \le (1+\varepsilon)\|z\|_2^2
 \qquad\text{for every }z\in\R^r.
\]
The supremum in \eqref{eq:main-bound} is outside the probability:
the frame is fixed before both sign diagonals and the sample, while
the prescribed width depends only on $n,r,\varepsilon$ and satisfies
$r\le k\le n$. The proof controls the Gram matrix directly at this
width. Full sampling gives the Gram matrix $I_r$ exactly.

\subsection*{Proof overview}

The fixed-size sample is first replaced by a Bernoulli coordinate
projection $E$, independent of $P=XX^T$, whose diagonal entries
have mean $0<\theta<1$. The independent selectors make it possible to
expand the trace by grouping equal coordinate indices. The normalized
Gram error is $\theta^{-1}X^T(E-\theta I_n)X$.
A direct trace expansion therefore involves entries of $P$.
Since $\E P=\delta I_n$, with $\delta=r/n$, centering $P$ as well gives
\[
 R=P-\delta I_n,\qquad W=E-\theta I_n,\qquad A=RW.
\]
This second centering removes every first-order entry cumulant,
including the diagonal ones. Consequently, the cumulant expansion
contains only blocks of at least two entries, a restriction needed
for the later counts.

Centering both factors leaves a product $A$ that is generally
nonsymmetric, and its trace at an even order $2p$ can be negative.
Lemma~\ref{lem:transfer} deals with this difficulty deterministically.
The two-projection decomposition expresses the compressed Gram
eigenvalues through scalar and two-dimensional invariant blocks.
In each two-dimensional block of $A$, the two roots have a fixed
product, while their sum changes with the Gram eigenvalue.
This gives the lower bound
$\tr A^{2p}\ge-2r[\delta(1-\delta)\theta(1-\theta)]^p$.
After adding this deterministic compensation, a deviation at either
spectral edge forces a large trace contribution, under the
hypotheses of that lemma. Thus it suffices to bound the signed
expectation $\E\tr A^{2p}$; no estimate of
$\E|\tr A^{2p}|$ is required.

The random part of the proof must control both the size and the
number of terms in this trace expansion. Section~\ref{sec:cumulants}
first bounds the joint entry cumulants. The product formula in
Lemma~\ref{lem:cumulant-identities} expands the two independent sign
layers over connected partitions. Each resulting graph sum contains
a rank-$r$ projection edge; factoring it through $\R^r$ allows
Lemma~\ref{lem:graph} to bound the sum by $r$.
Lemma~\ref{lem:cumulants} combines this estimate with Walsh character
symmetry, which makes a cumulant vanish unless its endpoint labels
satisfy a binary equation. Section~\ref{sec:trace-proof} then counts
the terms that can remain. Lemma~\ref{lem:count} bounds the equality
patterns of the sampling indices, Lemma~\ref{lem:binary-constraints}
counts labels satisfying the binary equations, and
Proposition~\ref{prop:entry-contribution} bounds the contribution of
one equality pattern. Each loss of an equality block removes a free
row label together with its selector factor, giving $(n\theta)^{-1}$.
Losing a cumulant block gives $r^{-1}$, and an independent binary
equation gives $n^{-1}$.
Keeping these factors in the count offsets its growth with the
moment order and yields Proposition~\ref{prop:trace}.

Combining that trace estimate with the deterministic transfer gives,
under the hypotheses of Lemma~\ref{cor:bernoulli},
\[
 \Prob\{\norm{\theta^{-1}X^TEX-I_r}>\eta\}
 \le (K_0^p+2r)\left(\frac{4r}{\kappa\eta^2}\right)^p,
 \qquad \kappa=n\theta,
\]
where $0<\eta<1$, $\kappa$ is the expected sample size, and $K_0$
is the universal constant in Proposition~\ref{prop:trace}.
When $\kappa$ is a sufficiently large constant multiple of
$r/\eta^2$, both terms on the right decay geometrically in $p$,
apart from the factor $r$. A moment order proportional to $\log r$
absorbs this factor without introducing an extra logarithm into
the sampling width. The rank condition in
Lemma~\ref{cor:bernoulli} restricts this argument to sufficiently
large $r$.

Section~\ref{sec:main-proof} completes the sampling and parameter
choices. Lemma~\ref{lem:sampling} couples a uniform fixed-size sample
between two Bernoulli samples when the upper density is below one;
set inclusion compares their Gram matrices, and binomial tails
control the sample counts. At higher sampling fractions, the
deterministic bound $(n/k)X^TS_JS_J^TX\preceq(n/k)I_r$ controls
the upper edge, while the lower Bernoulli comparison remains valid.
For the remaining ranks, Lemma~\ref{lem:small} averages over both
rerandomization and sampling to obtain a second-moment bound
$r(r+1)/k$. These ranks lie below a fixed universal threshold
$R_0$, so this bound is sufficient at the same prescribed width
after absorbing $R_0$ into $C$. Full sampling gives an exact
Gram matrix when the minimum in \eqref{eq:prescribed-width} equals $n$.
Figure~\ref{fig:proof-dependencies} records the dependencies among
the numbered results.

\begin{figure}[H]
\centering
\begin{tikzpicture}[
  >=stealth,
  result/.style={draw,rounded corners=2pt,align=center,
    text width=3.35cm,minimum height=0.88cm,inner sep=4pt,font=\small},
  every path/.style={line width=0.45pt}]
\node[result] (l5) at (-4.6,0)
  {Lemma~\ref{lem:graph}\\Graph contraction};
\node[result] (l6) at (0,0)
  {Lemma~\ref{lem:cumulant-identities}\\Cumulant identities};
\node[result] (l7) at (-2.3,-1.7)
  {Lemma~\ref{lem:cumulants}\\Entry cumulants};
\node[result] (l8) at (4.6,0)
  {Lemma~\ref{lem:count}\\Equality-pattern count};
\node[result] (l9) at (-2.3,-3.4)
  {Lemma~\ref{lem:binary-constraints}\\Binary constraints};
\node[result] (p10) at (2.3,-3.4)
  {Proposition~\ref{prop:entry-contribution}\\One pattern's contribution};
\node[result] (l2) at (-2.3,-5.1)
  {Lemma~\ref{lem:transfer}\\Spectral transfer};
\node[result] (p3) at (2.3,-5.1)
  {Proposition~\ref{prop:trace}\\Expected trace bound};
\node[result] (l4) at (0,-6.6)
  {Lemma~\ref{cor:bernoulli}\\Bernoulli embedding};
\node[result] (l11) at (4.6,-6.6)
  {Lemma~\ref{lem:sampling}\\Fixed-size transfer};
\node[result] (l12) at (-4.6,-8.2)
  {Lemma~\ref{lem:small}\\Small-rank bound};
\node[result] (full) at (4.6,-8.2)
  {Full sampling\\Exact Gram matrix};
\node[result] (main) at (0,-10.0)
  {Theorem~\ref{thm:main}};
\draw[->] (l5) -- (l7);
\draw[->] (l6) -- (l7);
\draw[->] (l7) -- (l9);
\draw[->] (l7) -- (p10);
\draw[->] (l6) -- (p10);
\draw[->] (l9) -- (p10);
\draw[->] (l8) -- (p10);
\draw[->] (l8.east) -- (6.65,0) -- (6.65,-5.1) -- (p3.east);
\draw[->] (p10) -- (p3);
\draw[->] (p3) -- (l4);
\draw[->] (l2) -- (l4);
\draw[->] (l4) -- (main);
\draw[->] (l11.east) -- (6.65,-6.6) -- (6.65,-10.0) -- (main.east);
\draw[->] (l12) -- (main);
\draw[->] (full) -- (main);
\end{tikzpicture}
\caption{Proof dependencies. An arrow points from a result to a
proof that uses it. Theorem~\ref{thm:main} combines the Bernoulli
estimate and fixed-size transfer for large ranks, the second-moment
bound for small ranks, and full sampling when needed. Definitions
and routine calculations are omitted.}
\label{fig:proof-dependencies}
\end{figure}

\subsection*{Relation to prior work}

The dimension benchmark concerns all oblivious linear maps, including
dense ones. For $0<\varepsilon<1/3$ and fixed failure probability
$0.01$, Nelson and Nguyen \cite[Theorem~6]{nelson_nguyen} prove the
lower bound
\[
 k=\Omega\!\left(\min\{n,r/\varepsilon^2\}\right).
\]
Their theorem uses relative norm error; squared-norm error
$\varepsilon$ implies that norm guarantee with the same $\varepsilon$.
The Gaussian upper bound, with a constant rescaling of its accuracy
parameter, attains the $r/\varepsilon^2$ order. The identity map
covers ambient-dimension saturation \cite[Section~1]{nelson_nguyen}.
This comparison explains both the dimension scale in Problem~5.6
and the necessity of retaining the cap by $n$.

Fast structured maps are particularly useful when applying a dense
sketch would dominate the matrix computation. Halko, Martinsson,
and Tropp \cite[Sections~4.6 and~11]{hmt} develop structured random
test matrices for approximate matrix decompositions.
Boutsidis and Gittens \cite[Theorems~2.1 and~3.1, Lemma~4.11]{boutsidis_gittens}
analyze SRHT low-rank approximation, least-squares regression and
matrix multiplication. For subspace embeddings, Tropp's analysis
\cite[Theorem~3.1]{tropp_srht} combines row-norm control with matrix
Chernoff bounds.

Multiple Hadamard/sign rounds have also been used for other objectives.
Andoni, Indyk, Laarhoven, Razenshteyn, and Schmidt
\cite[Section~3.1]{andoni_lsh} use a three-round transform as a fast
rotation in cross-polytope locality-sensitive hashing.
Choromanski et al.\ \cite[Lemma~1 and Theorems~5.1--5.3]{triplespin}
give distributional and hashing guarantees for three-round structures
within the TripleSpin framework. These are the multiround constructions
discussed after Problem~5.6 in \cite{workshop}; their conclusions concern
different observables from the uniformly sampled subspace Gram matrix.
For the same unsampled two-round rotation used here, Zilca and
Mendelson \cite[Equation~(1) and Theorem~4.1]{zilca_mendelson}
prove an $O(n^{-1/5})$ coordinate Kolmogorov bound relative to
a Haar-rotated vector, uniformly over fixed unit inputs and coordinate
indices. Their nonvanishing worst-case full-vector Wasserstein
discrepancy \cite[Theorems~5.1--5.2]{zilca_mendelson} addresses the
full output distribution. Theorem~\ref{thm:main} instead controls
the two spectral edges after coordinate sampling.

Other random-map laws achieve linear dependence on the subspace
dimension. Chenakkod, Derezi\'nski, Dong, and Rudelson
\cite[Theorem~4.5]{cddr_sparse} combine a Hadamard preprocessing step
with an independent sparse factor. With singular-value error
$\eta=\varepsilon/3$ and failure probability $0.01$, their theorem
gives dimension $O(r/\varepsilon^2)$ in the regime
$r>\log(200n)$, with its prescribed sparsity parameters.
The sparse factor takes the place of the second Walsh transform
and uniform coordinate subset in the present construction.

A related set of proof tools appears in the work of Chenakkod,
Derezi\'nski, and Dong \cite[Theorems~3 and~7]{cdd_icalp2025}.
For $r>10$, they study OSNAP embeddings and obtain dimension $O(r/\varepsilon^2)$
with near-optimal column sparsity when the inverse distortion and
inverse failure probability are at most polynomial in $r$.
Their analysis combines decoupling, cumulant expansions along
Gaussian interpolation, and trace inequalities
\cite[Section~4; full version, Sections~7.2--7.5]{cdd_icalp2025}.
In the fixed Walsh model considered here, connected graph
sums and character identities control the joint entry cumulants
directly. The subsequent trace is that of a generally nonsymmetric
product of centered projections; the two-projection calculation
provides the compensation required to recover both spectral edges.

Centered alternating traces of projections also appear in Kunisky's
analysis of the MANOVA law, a limiting eigenvalue distribution
for products of random projections
\cite[Theorems~1.5 and~1.12]{kunisky_manova}.
His moment recursions give sufficient conditions for convergence
in probability of the largest eigenvalue. The common starting point is the
centered trace; here the spectral transfer is a finite-dimensional
inequality with explicit compensation, and the moment estimate
must hold for the prescribed two-sign Walsh model.

V\'azquez-Becerra \cite[Section~4]{vazquez_becerra} combines
cumulant expansions, graph sums, and the bounds of Mingo and
Speicher to study fluctuations of traces, including ensembles
built from discrete Fourier matrices. These tools are closely
related to those in Section~\ref{sec:cumulants}. The distinction
needed here is control of fixed matrix entries, with explicit
dependence on cumulant order and on the rank of an arbitrary
fixed input projection. The remarks after
Lemmas~\ref{lem:transfer} and~\ref{lem:cumulants} give the
corresponding method comparisons.

The remainder is organized as follows.  Section~\ref{sec:reduction} gives the deterministic spectral reduction
and the Bernoulli embedding consequence of the trace estimate.
Section~\ref{sec:cumulants} proves the joint cumulant bounds, and
Section~\ref{sec:trace-proof} supplies the complete trace calculation.
Section~\ref{sec:main-proof} treats fixed-size sampling and all
remaining ranks and dimensions to prove Theorem~\ref{thm:main}.
Throughout, unadorned logarithms are natural, $\log_2$ is the binary
logarithm, and $\|\cdot\|_F$ denotes the Frobenius norm.

\section{From spectral deviations to a centered trace estimate}
\label{sec:reduction}

The argument begins with Bernoulli sampling. Let $E$ be a diagonal projection
whose entries are independent Bernoulli variables of mean $\theta$,
independent of the two sign diagonals. The matrix
$\theta^{-1}X^TEX$ is the corresponding normalized Gram matrix.
Since $X^TX=I_r$, its deviation is
\begin{equation}\label{eq:gram-centering}
 \theta^{-1}X^TEX-I_r
       =\theta^{-1}X^T(E-\theta I_n)X.
\end{equation}
Writing $W=E-\theta I_n$, cyclicity of the trace gives, for every
positive integer $j$,
\[
 \tr\bigl(X^TWX\bigr)^j=\tr(PW)^j,\qquad P=XX^T.
\]
Thus a direct moment expansion introduces entries of the
uncentered projection $P$. Lemma~\ref{lem:cumulants} below proves
that $\E P=\delta I_n$.
Subtracting this mean removes every first-order entry cumulant,
including those of diagonal entries. This is the reason for also
centering $P$: the entry-cumulant expansion will then contain only
blocks of size at least two. The resulting product need not be
symmetric, so a signed trace estimate requires a separate passage
back to the Gram eigenvalues.
Write $R=P-\delta I_n$, $W=E-\theta I_n$, and $A=RW$.
This section shows why controlling a signed even trace of $A$
suffices to bound deviations of that Gram matrix.

The first step is deterministic. For two orthogonal projections,
the compressed eigenvalues describe two-dimensional invariant
blocks. Centering the projections changes each block to a matrix
with a quadratic characteristic polynomial. Its real and nonreal
roots contribute differently to an even trace; the following lemma
keeps the possible negative contribution explicit.

\begin{lemma}\label{lem:transfer}
Let \(P=XX^T\) and \(E\) be any real orthogonal projections,
\(X^TX=I_r\). Let \(0<\delta\le1/2\), \(0<\theta<1\), and set
\[
 A=(P-\delta I)(E-\theta I),\qquad
 a=\sqrt{\delta(1-\delta)\theta(1-\theta)}.
\]
For every integer \(p\ge1\), \(\tr A^{2p}\ge-2r a^{2p}\).
If \(0<\eta<1\) and \(\delta\le\theta\eta^2/64\), then pointwise
\begin{equation}\label{eq:spectral-transfer}
 1_{\{\norm{\theta^{-1}X^TEX-I_r}>\eta\}}
          (\theta\eta/2)^{2p}
                \le\tr A^{2p}+2r a^{2p}.
\end{equation}
\end{lemma}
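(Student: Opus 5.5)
The plan is to use the canonical (Halmos) decomposition of two orthogonal projections. Let $U=\operatorname{ran}P$, with $\dim U=r$, and $F=\operatorname{ran}E$. Then $\R^n$ splits orthogonally into the four intersections $U\cap F$, $U\cap F^\perp$, $U^\perp\cap F$, $U^\perp\cap F^\perp$ and a generic part. The generic part is a direct sum of two-dimensional subspaces, each invariant under both $P$ and $E$. In a suitable basis of such a block,
\[
P=\begin{pmatrix}1&0\\0&0\end{pmatrix},\qquad
E=\begin{pmatrix}c^2&cs\\cs&s^2\end{pmatrix},\qquad c=\cos\phi,\ s=\sin\phi,\ 0<\phi<\pi/2 .
\]
Each generic block meets $U$ in a line, so there are at most $r$ of them. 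The eigenvalues of the compression $X^TEX$ are $1$ on $U\cap F$, $0$ on $U\cap F^\perp$, and $\lambda=c^2\in(0,1)$, one per generic block. Since $A=(P-\delta I)(E-\theta I)$ respects this decomposition, $\tr A^{2p}$ is the sum of the block traces.

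\emph{Lower bound.} On a one-dimensional block, $A$ is the real scalar $(p_0-\delta)(e_0-\theta)$ with $p_0,e_0\in\{0,1\}$, so its $2p$-th power is nonnegative. On a generic block I will compute the characteristic polynomial of $A$. The determinant is
\[
\det(P-\delta)\det(E-\theta)=\bigl[(1-\delta)(-\delta)\bigr]\bigl[(1-\theta)(-\theta)\bigr]=a^2,
\]
and the trace is $t=\tr(PE)-\theta\tr P-\delta\tr E+2\delta\theta=\lambda-\theta-\delta+2\delta\theta$. The roots therefore satisfy $\mu_1\mu_2=a^2>0$ and $\mu_1+\mu_2=t$. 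If the roots are real, they have the same sign and $\mu_1^{2p}+\mu_2^{2p}\ge0$. If they are nonreal, they are conjugate with $|\mu_i|=a$, so $\mu_1^{2p}+\mu_2^{2p}=2\operatorname{Re}\mu_1^{2p}\ge-2a^{2p}$. Summing over at most $r$ generic blocks gives $\tr A^{2p}\ge-2ra^{2p}$. More precisely, every block contributes at least $-2a^{2p}$, and every block with real roots contributes at least $0$.

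\emph{Transfer.} Suppose $\norm{\theta^{-1}X^TEX-I_r}>\eta$. Then some compressed eigenvalue $\lambda$ satisfies $|\lambda-\theta|>\theta\eta$. It then suffices to exhibit one block whose contribution is at least $(\theta\eta/2)^{2p}$ with real (hence nonnegative) spectrum; the remaining blocks together contribute at least $-2ra^{2p}$, which gives \eqref{eq:spectral-transfer}. There are three cases.
\begin{itemize}
\item $\lambda=1$, from $U\cap F$. Then $1-\theta>\theta\eta$, and since $\delta\le1/2$ the scalar $(1-\delta)(1-\theta)$ is at least $\theta\eta/2$.
\item $\lambda=0$, from $U\cap F^\perp$. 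The scalar $(1-\delta)\theta$ is at least $\theta/2\ge\theta\eta/2$.
\item $\lambda$ from a generic block. Using $\delta\le\theta\eta^2/64$,
\[
|t|\ge|\lambda-\theta|-\delta|1-2\theta|>\theta\eta-\tfrac{\theta\eta^2}{64}\ge\tfrac{63}{64}\theta\eta,
\qquad
4a^2\le4\delta\theta\le\tfrac{\theta^2\eta^2}{16}<t^2 .
\]
So the roots are real, and the larger one in absolute value is $\bigl(|t|+\sqrt{t^2-4a^2}\bigr)/2$. Since $|t|\ge\tfrac{63}{64}\theta\eta$ and $\sqrt{t^2-4a^2}\ge\bigl(\tfrac{63}{64}\bigr)\tfrac{\sqrt{15}}{4}\,\theta\eta$, this is well above $\theta\eta/2$.
\end{itemize}

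The only delicate step is this generic case. The hypothesis $\delta\le\theta\eta^2/64$ must do two jobs at once. It keeps the shift $\delta(1-2\theta)$ in $t$ small relative to $\theta\eta$. It also forces $4a^2<t^2$, so that the deviating block has real roots and cannot itself incur the negative compensation. The constants above leave ample room for both.
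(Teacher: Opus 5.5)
Your proof is correct and follows essentially the same route as the paper. Both use the two-projection block decomposition, the block characteristic polynomial with determinant $a^2$ and trace $t=\lambda-\theta-\delta+2\delta\theta$, the conjugate-root compensation $-2a^{2p}$, and the same three cases for a deviating compressed eigenvalue.

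There is one small numerical slip. The bound $\sqrt{t^2-4a^2}\ge\frac{63}{64}\cdot\frac{\sqrt{15}}{4}\,\theta\eta$ would require $4a^2\le t^2/16$, that is, $|t|\ge\theta\eta$, but you only have $|t|>\frac{63}{64}\theta\eta$. The claimed bound can fail marginally when $\theta$ is small and $\delta=\theta\eta^2/64$. Use $t^2-4a^2\ge\bigl((63/64)^2-1/16\bigr)\theta^2\eta^2$ instead. Alternatively, as the paper does, note that the smaller root is at most $a$, so the larger is at least $|t|-a>\frac{55}{64}\theta\eta$. Either way the conclusion is unaffected.
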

\begin{proof}
Fix an integer \(p\ge1\). A common invariant-block
decomposition of \(P\) and \(E\) controls cancellation in
\(\tr A^{2p}\). The fixed eigenvalue product on each
two-dimensional block bounds its possible negative contribution.
Under the additional small-\(\delta\) hypothesis, a spectral
deviation forces one block to contribute more than
\((\theta\eta/2)^{2p}\). The blockwise lower bounds control the
remaining contributions and yield the stated inequality.

\paragraph{Compressed spectrum.}
The compressed operator is \(PEP\) restricted to
\(\operatorname{range}P\). Since \(X^TX=I_r\) and \(P=XX^T\),
the map \(v\mapsto Xv\) is an isometry onto that space, and
\[
 PEPX=X(X^TEX).
\]
Thus its eigenvalues \(\lambda_1,\ldots,\lambda_r\) are precisely
those of \(X^TEX\). They lie in \([0,1]\), because
\(0\le u^TEu\le\|u\|^2\). In particular,
\[
 \norm{\theta^{-1}X^TEX-I_r}
       =\theta^{-1}\max_{1\le i\le r}|\lambda_i-\theta|.
\]

\paragraph{Invariant blocks.}
The standard two-projection decomposition
\cite[Theorem~2]{halmos1969} is constructed directly in finite
dimensions here to include the endpoint eigenspaces.
Diagonalize \(PEP\) on \(\operatorname{range}P\).
For a unit eigenvector \(u\) with eigenvalue \(0<\lambda<1\), put
\[
 w=\frac{Eu-\lambda u}{\sqrt{\lambda(1-\lambda)}}.
\]
Here \(\lambda u=PEu\), so \(w\) is the normalized component of
\(Eu\) perpendicular to \(\operatorname{range}P\).
Indeed, \(\|Eu\|^2=\langle u,Eu\rangle=\lambda\), and hence
\[
 P(Eu-\lambda u)=0,\qquad
 \|Eu-\lambda u\|^2=\lambda-2\lambda^2+\lambda^2
                         =\lambda(1-\lambda).
\]
It follows that \(Pw=0\), \(\norm w=1\), and \(u,w\) are orthonormal.
The definition of \(w\) and \(E^2=E\) give
\[
 Eu=\lambda u+\sqrt{\lambda(1-\lambda)}\,w,\qquad
 Ew=\frac{(1-\lambda)Eu}{\sqrt{\lambda(1-\lambda)}}
    =\sqrt{\lambda(1-\lambda)}\,u+(1-\lambda)w.
\]
Thus their span is invariant under both projections, whose matrices
in this basis are
\begin{equation}\label{eq:two-projection-block}
 P=\begin{pmatrix}1&0\\0&0\end{pmatrix},\qquad
 E=\begin{pmatrix}\lambda&\sqrt{\lambda(1-\lambda)}\\
                  \sqrt{\lambda(1-\lambda)}&1-\lambda\end{pmatrix}.
\end{equation}
Distinct eigenbasis vectors \(u,u'\) with eigenvalues
\(\lambda,\lambda'\in(0,1)\) give orthogonal pairs, even when
\(\lambda=\lambda'\). To see this, use
\[
 \langle u,Eu'\rangle
   =\langle u,PEPu'\rangle
   =\lambda'\langle u,u'\rangle=0,\qquad
 \langle Eu,Eu'\rangle=\langle u,Eu'\rangle=0.
\]
Together with the definitions of \(w,w'\), these identities give
\(\langle u,w'\rangle=\langle w,u'\rangle=\langle w,w'\rangle=0\).

For the endpoint eigenvalues \(\lambda=0,1\), no second vector \(w\)
is needed. In either case \(Pu=u\). If \(\lambda=0\), then
\(\|Eu\|^2=\lambda=0\), so \(Eu=0\). If \(\lambda=1\), then
\(\|(I-E)u\|^2=1-\lambda=0\), so \(Eu=u\).
The corresponding eigenspaces are therefore
\(\operatorname{range}P\cap\ker E\) and
\(\operatorname{range}P\cap\operatorname{range}E\), respectively.
Each endpoint eigenvector in the chosen orthonormal eigenbasis spans a
one-dimensional subspace invariant under both \(P\) and \(E\).

Every vector of the eigenbasis of
\(\operatorname{range}P\) now lies in a one- or two-dimensional invariant block.
It remains to handle the orthogonal complement of all these blocks.
Because the blocks together contain \(\operatorname{range}P\),
this complement lies in \(\ker P\), so \(P\) is zero there.
To check that \(E\) preserves it, take a vector \(v\) in the complement
and a vector \(z\) in any constructed block. Then
\[
 \langle Ev,z\rangle=\langle v,Ez\rangle=0,
\]
where the first equality uses symmetry of \(E\), and the last
follows because \(Ez\) stays in the same block.
Thus the restriction of \(E\) to the complement is again an
orthogonal projection. An orthonormal eigenbasis of this restriction
splits the complement into one-dimensional blocks on which \(P=0\)
and \(E=0\) or \(E=1\). This completes the decomposition of the
entire ambient space.

Finally, a two-dimensional block is created only for an eigenbasis
vector whose eigenvalue lies in \((0,1)\). There are \(r\) vectors
in the eigenbasis of \(\operatorname{range}P\), so the number of
two-dimensional blocks is at most \(r\).

\paragraph{Block polynomial.}
On \eqref{eq:two-projection-block}, the restriction of \(A\) is
\[
 \begin{pmatrix}
 (1-\delta)(\lambda-\theta)&
       (1-\delta)\sqrt{\lambda(1-\lambda)}\\
 -\delta\sqrt{\lambda(1-\lambda)}&
       -\delta(1-\lambda-\theta)
 \end{pmatrix}.
\]
Its trace is
\((1-\delta)(\lambda-\theta)-\delta(1-\lambda-\theta)
=\lambda-\delta-\theta+2\delta\theta\).
Its determinant is \(a^2\): the two centered factors have
determinants \(-\delta(1-\delta)\) and \(-\theta(1-\theta)\).
Consequently its characteristic polynomial is
\begin{equation}\label{eq:block-polynomial}
 z^2-tz+a^2,\qquad t=\lambda-\delta-\theta+2\delta\theta.
\end{equation}
Write its roots as \(z_1,z_2\). The product and sum of the roots satisfy
\[
 z_1z_2=a^2,\qquad
 z_1+z_2=t=(\lambda-\theta)-\delta(1-2\theta).
\]
Thus the product is independent of \(\lambda\), while the sum records
the deviation \(\lambda-\theta\) up to the shift
\(-\delta(1-2\theta)\).

\paragraph{Trace lower bound.}
To prove \(\tr A^{2p}\ge-2ra^{2p}\), consider a two-dimensional
block. Its contribution to \(\tr A^{2p}\) equals
\(z_1^{2p}+z_2^{2p}\), including when the block has a defective
repeated root. Indeed, Cayley--Hamilton gives the recurrence
\(S_j=tS_{j-1}-a^2S_{j-2}\), \(j\ge2\), for both the power traces
and the root-power sums, with the same initial values
\(S_0=2\), \(S_1=t\).

If the roots are nonreal, they
are conjugates, and \(z_1z_2=|z_1|^2=a^2\) gives
\[
 z_1^{2p}+z_2^{2p}
       =2\operatorname{Re}(z_1^{2p})\ge-2a^{2p}.
\]
If the roots are real, their even-power sum is nonnegative.
The remaining one-dimensional eigenvalues are among
\[
 -(1-\delta)\theta,\quad (1-\delta)(1-\theta),\quad
                  -\delta(1-\theta),\quad \delta\theta,
\]
corresponding to the four possibilities
\((P,E)=(1,0),(1,1),(0,1),(0,0)\) on a common eigenvector.
Their even powers are nonnegative. Summing over at most \(r\)
two-dimensional blocks proves \(\tr A^{2p}\ge-2ra^{2p}\).

\paragraph{Spectral deviation.}
For the second conclusion, assume \(0<\eta<1\) and
\(\delta\le\theta\eta^2/64\). Suppose the bad event in
\eqref{eq:spectral-transfer} occurs.
Since the deviation norm equals
$\theta^{-1}\max_i|\lambda_i-\theta|$, there is a compressed eigenvalue with
\(|\lambda-\theta|>\theta\eta\). Set \(b=\theta\eta\).
The following cases identify a block contributing more than \((b/2)^{2p}\).

For \(0<\lambda<1\), the corresponding block has polynomial
\eqref{eq:block-polynomial}. The hypothesis and \(0<\eta<1\) give
\(\delta\le b\eta/64<b/64\). Since \(|1-2\theta|\le1\),
\[
 \begin{split}
 |t|&=|(\lambda-\theta)-\delta(1-2\theta)|
       \ge|\lambda-\theta|-\delta|1-2\theta|
       >b-\delta>63b/64,\\
 a^2&\le\delta\theta\le\theta^2\eta^2/64=b^2/64.
 \end{split}
\]
In particular,
\[
 |t|>63b/64,\qquad a\le\sqrt{\delta\theta}\le b/8.
\]
It follows that \(|t|>2a\), so the discriminant \(t^2-4a^2\)
is positive and both roots are real. They have the same sign
because \(z_1z_2=a^2>0\). Also,
\[
 \min\{|z_1|,|z_2|\}\le\sqrt{|z_1z_2|}=a,\qquad
 |z_1|+|z_2|=|t|.
\]
Thus the larger absolute value is at least \(|t|-a\).
Since \(|t|-a>55b/64>b/2\) and both even powers are nonnegative, this
block contributes more than \((b/2)^{2p}\).

The endpoint cases give the same conclusion in one dimension.
For \(\lambda=0\), the eigenvalue of \(A\) is
\(-(1-\delta)\theta\), with magnitude
\((1-\delta)\theta\ge\theta/2>b/2\).
For \(\lambda=1\), the bad-event condition gives
\(1-\theta=|\lambda-\theta|>b\). The corresponding eigenvalue
\((1-\delta)(1-\theta)\) is therefore at least
\((1-\theta)/2>b/2\). These bounds use
\(\delta\le1/2\) and \(\eta<1\), and apply throughout \(0<\theta<1\).
These cases also cover \(E=0\) and \(E=I\).

\paragraph{Final comparison.}
On the bad event, one block contributes more than \((b/2)^{2p}\).
After excluding this block, at most \(r\) two-dimensional blocks
remain, each contributing at least \(-2a^{2p}\); every remaining
one-dimensional block contributes a nonnegative amount.
Thus all other blocks together contribute at least \(-2ra^{2p}\),
so
\[
 \tr A^{2p}+2ra^{2p}>(b/2)^{2p}=(\theta\eta/2)^{2p}.
\]
On the complementary event the indicator is zero, and the lower
trace bound makes the right-hand side nonnegative.
This proves \eqref{eq:spectral-transfer}.
\end{proof}

The event bound in Lemma~\ref{lem:transfer} applies to both spectral
edges because it depends on $|\lambda-\theta|$. Its right-hand side
is nonnegative after the explicit compensation, even on the event
where the Gram matrix is close to $I_r$.

\begin{remark}[Centered projection traces]
Kunisky \cite[Theorem~1.12 and Sections~4--5]{kunisky_manova}
also passes from centered alternating traces to a spectral edge.
His theorem assumes fixed positive limiting rank proportions, one
equal to $1/2$, and convergence of the empirical spectral distribution
to the MANOVA law. Bounds at moment orders growing faster than the
logarithm of the dimension then give convergence in probability of the largest
eigenvalue. The transfer uses recursions for moments of the positive
compressed operator. In Lemma~\ref{lem:transfer}, the fixed product
of the two roots instead gives a pointwise compensation for negative
trace contributions and controls both edges under
$\delta\le\theta\eta^2/64$.
\end{remark}

Now return to the Walsh projection $P=XX^T$, with
$X=HD_2HD_1V$ and $\delta=r/n$. With the independent Bernoulli
selector $E$ from the start of this section, put
\[
 \kappa=n\theta,\qquad K_2=4^{24},\qquad K_0=576K_2.
\]
\begin{proposition}\label{prop:trace}
For an integer \(p\ge2\), suppose
\begin{equation}\label{eq:trace-hypotheses}
 0<\theta<1,\qquad \kappa\ge r,\qquad
                         r\ge2(4p+1)^{1000}.
\end{equation}
Then
\begin{equation}\label{eq:trace-bound}
                  \left|\E\tr A^{2p}\right|
                           \le K_0^p(\delta\theta)^p.
\end{equation}
\end{proposition}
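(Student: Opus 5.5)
We will expand the trace as
\[
 \tr A^{2p}=\sum_{i_1,\dots,i_{2p}} R_{i_1i_2}W_{i_2}R_{i_2i_3}W_{i_3}\cdots R_{i_{2p}i_1}W_{i_1},
\]
where \(W_i=E_{ii}-\theta\). We then use independence of \(E\) from \((D_1,D_2)\) to factor the expectation into a selector part and a projection part. The selector part depends only on the equality pattern \(\pi\) of the indices \(i_1,\dots,i_{2p}\). Since \(\E W_i=0\) and \(|\E W_i^s|\le\theta(1-\theta)\le\theta\) for \(s\ge2\), every block of \(\pi\) must have size at least two, and the pattern contributes at most \(\theta^{|\pi|}\). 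For fixed \(\pi\), the projection part is \(\E\prod_j R_{i_ji_{j+1}}\) summed over injective labelings of the blocks of \(\pi\). We will expand this moment into joint entry cumulants over set partitions \(\sigma\) of the \(2p\) factors. Because \(R\) is centered (\(\E P=\delta I_n\), from Lemma~\ref{lem:cumulants}), only partitions \(\sigma\) without singletons survive.

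For each pair \((\pi,\sigma)\), Lemma~\ref{lem:cumulants} will bound each cumulant block of size \(s\) by roughly \(C^s\,r\,n^{-s}\). This is the graph contraction of Lemma~\ref{lem:graph} giving one factor \(r\) per connected component, with \(n^{-1}\) per entry. The same lemma makes the cumulant vanish unless the Walsh labels of its endpoints satisfy a binary (XOR) equation over \(\F_2^m\). Lemma~\ref{lem:binary-constraints} then bounds the number of labelings of the \(|\pi|\) free row labels consistent with the independent binary constraints by \(n^{|\pi|-\mathrm{rank}}\). Proposition~\ref{prop:entry-contribution} packages this into a single bound for one pattern of the form
\[
 C^{p}\,(\delta\theta)^p\,(n\theta)^{-(p-|\pi|)}\,r^{-(p-|\sigma|)}\,n^{-(\text{deficit in independent constraints})}.
\]
The normalization is that the maximal configuration, with \(|\pi|=p\) (all pairs) and \(|\sigma|=p\) (all pairs), gives exactly \((\delta\theta)^p\) up to \(C^p\). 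Heuristically, \(p\) free labels contribute \(n^p\), selectors contribute \(\theta^p\), and \(p\) cumulant pairs contribute \((r/n^2)^p\), so the product is \((r\theta/n)^p\).

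Next we sum over all \(\pi\) and \(\sigma\). Lemma~\ref{lem:count} will bound the number of equality patterns with \(p-\ell\) blocks by something like \((Cp)^{2\ell}\), and similarly for cumulant partitions. Each unit of loss is paid by \((n\theta)^{-1}\le r^{-1}\) (using \(\kappa\ge r\)) or by \(r^{-1}\). The hypothesis \(r\ge 2(4p+1)^{1000}\) then ensures that \((Cp)^{c}/r\le 1/2\), so the series over \(\ell\) is geometric. What remains is \(576K_2\)-type constants raised to the power \(p\), giving \(|\E\tr A^{2p}|\le K_0^p(\delta\theta)^p\).

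The main obstacle will be the middle step: verifying that every loss of a block, whether a coincidence among indices or a merged cumulant, really comes with its compensating factor. A block of \(\pi\) of size \(s>2\) must lose a free label without the binary constraints becoming dependent in a way that gives back a factor \(n\). Crossing patterns between \(\pi\) and \(\sigma\) can also create dependent constraints. I would first try to show that the rank of the binary system is at least \(|\sigma|-c(\pi,\sigma)\), where \(c\) counts components of the graph whose vertices are the blocks of \(\pi\) and whose edges are the blocks of \(\sigma\). I would then charge each missing constraint to a cycle in that graph, with each cycle paid for by a lost block, using polynomial-in-\(p\) counting together with the \(r^{-1}\) gains.
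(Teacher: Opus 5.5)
Your outline matches the paper's skeleton: selector moments over equality partitions $\pi$ with blocks of size at least two, singleton-free cumulant partitions, the XOR count $n^{v-h}$, and the per-pattern factor $(\delta\theta)^p\kappa^{-s}r^{-d}n^{-h}$. The summation step, however, has a genuine gap. Your accounting rewards only the block losses $s=p-|\pi|$ and $d=p-|\sigma|$. It assumes the number of patterns at a given loss is $C^p$ times a polynomial in $p$ per unit of loss, and this fails at zero loss. Every pairing of $[2p]$ is an admissible $\pi$ with $s=0$, so there are $(2p-1)!!$ of them. For a fixed $\pi$ there are again $(2p-1)!!$ pair partitions $\sigma$ with $d=0$. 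If each such term is bounded by $C^p(\delta\theta)^p$ and summed with the triangle inequality, you get a factor $((2p-1)!!)^2$, which no fixed $K_0^p$ dominates for large $p$. Similarly, a cumulant block of size $q$ carries the constant $(2q)^{12q}$, not $C^q$, so large blocks must also be charged to $d$. The paper does this with $L^{72d}$, using that at most $6d$ occurrences lie in blocks of size at least three.

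The missing idea is a third parameter, tied to $d$ and $h$, that measures departure from the leading geometry. The paper lets $t$ count the vertices of the quotient graph incident to a nonloop edge of odd multiplicity.
\begin{itemize}
\item Lemma~\ref{lem:count} gives $N_{p,s,t}\le8p\,3^pL^{74(s+t)}$, and $s=t=0$ forces a doubled cycle or a doubled path with a loop at each end.
\item For fixed $\pi$, Lemma~\ref{lem:binary-constraints} uses \emph{injectivity} of the row labels. A pair of adjacent nonparallel edges, or a loop with a nonloop edge, has XOR row $\mathbf e_v+\mathbf e_w$ with $v\ne w$. This forces $\iota(v)=\iota(w)$, so its cumulant vanishes.
\item The surviving pairs are parallel, loop--loop, or disjoint with weight-four rows. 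Since an odd-multiplicity edge cannot be paired entirely with parallel copies, $t\le12d+4h$.
\item The cumulant-partition count becomes $(3K_2)^pL^{84d+12h+9s+2t+1}$. Only then does summing $L^{76t}$ over $t\le12d+4h$ give $2L^{912d+304h}$, which is absorbed by $\kappa^{-s}r^{-d}n^{-h}$ since $\kappa,n\ge r\ge2L^{1000}$.
\end{itemize}
So the factor $n^{-h}$, which your summation never uses, is essential.

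Your proposed repair, $\operatorname{rank}\ge|\sigma|-c(\pi,\sigma)$, is false. For $p=2$ and the word $(a,b,a,b)$, splitting the four parallel $a$--$b$ edges into two pairs gives zero rows, so the rank is $0$ while $|\sigma|-c=1$. For a doubled cycle on $p$ vertices with parallel pairs, the gap is $p-1$. These are exactly the leading terms, which carry no extra decay, so no scheme charging missing constraints to lost blocks can succeed. Your worry that dependent constraints could ``give back'' a factor $n$ is also misplaced, since $n^{v-h}\le n^v$ always. What must be controlled is the number of pairs $(\pi,\sigma)$ at small $(s,d,h)$, and that is what $t$ and the injectivity exclusion provide.
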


Notice that the absolute value in \eqref{eq:trace-bound} is outside the expectation.

The proof of Proposition~\ref{prop:trace} is given in
Section~\ref{sec:trace-proof}, using the cumulant estimates in
Section~\ref{sec:cumulants}. Combining Proposition~\ref{prop:trace}
with Lemma~\ref{lem:transfer} gives the following Bernoulli
embedding estimate.

\begin{samepage}
\begin{lemma}\label{cor:bernoulli}
Let
\[
 p=\lceil\log_2(3000r)\rceil,
 \qquad
 r\ge2(4p+1)^{1000}.
\]
For \(0<\eta<1\), \(0<\theta<1\), and
\(\kappa=n\theta\ge64K_0r/\eta^2\), the exact random projection \(P\)
and independent Bernoulli coordinate projection \(E\) satisfy
\begin{equation}\label{eq:bernoulli-bound}
 \Prob\{\norm{\theta^{-1}X^TEX-I_r}>\eta\}\le1/1000.
\end{equation}
\end{lemma}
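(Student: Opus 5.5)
We plan to combine Lemma~\ref{lem:transfer} with Proposition~\ref{prop:trace} through Markov's inequality on the compensated trace. First we check the hypotheses of both results. Since $\kappa=n\theta\ge64K_0r/\eta^2$, we get $\delta=r/n\le\theta\eta^2/(64K_0)\le\theta\eta^2/64$, and in particular $\delta\le1/2$ because $\theta<1$ and $\eta<1$. For Proposition~\ref{prop:trace} we need $p\ge2$, $0<\theta<1$, $\kappa\ge r$ and $r\ge2(4p+1)^{1000}$. The first holds since $r\ge1$ gives $p\ge\lceil\log_2 3000\rceil=12$. The second is assumed, the third follows from $64K_0/\eta^2\ge1$, and the last is the stated rank hypothesis.

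Next, take expectations in the pointwise inequality \eqref{eq:spectral-transfer}; the joint law of $P$ and the independent $E$ is used only through $\E\tr A^{2p}$. This gives
\[
 \Prob\{\norm{\theta^{-1}X^TEX-I_r}>\eta\}\,(\theta\eta/2)^{2p}
 \le \E\tr A^{2p}+2r a^{2p}
 \le K_0^p(\delta\theta)^p+2r(\delta\theta)^p .
\]
Here we bound the signed expectation by its absolute value and use $a^2=\delta(1-\delta)\theta(1-\theta)\le\delta\theta$. Dividing by $(\theta\eta/2)^{2p}$ yields
\[
 \Prob\{\cdot\}\le (K_0^p+2r)\Bigl(\frac{4\delta}{\theta\eta^2}\Bigr)^p
 =(K_0^p+2r)\Bigl(\frac{4r}{\kappa\eta^2}\Bigr)^p,
\]
which is the display in the overview.

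Finally, we insert $\kappa\ge64K_0r/\eta^2$, so that $4r/(\kappa\eta^2)\le1/(16K_0)$. The first term is then at most $16^{-p}$ and the second at most $2r(16K_0)^{-p}\le2r\,2^{-p}$, using $K_0\ge1$. With $p\ge\log_2(3000r)$ we get $2^{-p}\le1/(3000r)$. Hence $16^{-p}\le2^{-p}\le1/3000$ and $2r\,2^{-p}\le2/3000$, so the total is at most $3/3000=1/1000$.

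There is no real obstacle here; the only points needing care are verifying $\delta\le1/2$ and $\delta\le\theta\eta^2/64$ from the lower bound on $\kappa$, and ensuring the factor $2r$ is absorbed by the choice of $p$ without an extra logarithm.
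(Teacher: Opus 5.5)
Your proposal is correct and follows essentially the same route as the paper: derive $\kappa\ge r$ and $\delta\le\theta\eta^2/64<1/2$ from the width assumption, take expectations in the pointwise inequality of Lemma~\ref{lem:transfer}, bound the signed trace via Proposition~\ref{prop:trace} and $a^2\le\delta\theta$, and absorb the factor $2r$ using $2^{-p}\le(3000r)^{-1}$. Your explicit check that $p\ge12\ge2$, needed for Proposition~\ref{prop:trace}, is a detail the paper leaves implicit.
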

\end{samepage}
\begin{proof}
Let
$$
 \mathcal B
   =\left\{
      \norm{\theta^{-1}X^TEX-I_r}>\eta
     \right\}.
$$
Since \(\kappa=n\theta\) and \(\delta=r/n\), the assumption
\(\kappa\ge64K_0r/\eta^2\) implies
$$
 \kappa\ge r,
 \qquad
 \delta=\frac{r\theta}{\kappa}
     \le \frac{\theta\eta^2}{64K_0}
     \le \frac{\theta\eta^2}{64}.
$$
In particular, $0<\delta<1/64<1/2$. Hence Proposition~\ref{prop:trace} and
Lemma~\ref{lem:transfer} both apply. 
Taking expectations in the pointwise inequality
\eqref{eq:spectral-transfer} gives
$$
 \Prob(\mathcal B)\left(\frac{\theta\eta}{2}\right)^{2p}
 \le
 \E\tr A^{2p}+2r a^{2p}.
$$
Proposition~\ref{prop:trace},
\(a^2\le\delta\theta\), and
\(\E\tr A^{2p}\le|\E\tr A^{2p}|\) together give
$$
 \Prob(\mathcal B)\left(\frac{\theta\eta}{2}\right)^{2p}
 \le
 (K_0^p+2r)(\delta\theta)^p.
$$
Since
$
 \delta\theta=\frac{r\theta^2}{\kappa}
$, 
division by \((\theta\eta/2)^{2p}\) yields
$$
 \Prob(\mathcal B)
 \le
 (K_0^p+2r)
 \left(\frac{4r}{\kappa\eta^2}\right)^p.
$$
The width assumption gives
$
 \frac{4r}{\kappa\eta^2}\le\frac1{16K_0}
$, 
and therefore, since \(K_0\ge1\),
$$
 \Prob(\mathcal B)
 \le
 (1+2r)16^{-p}
 \le
 3r\,2^{-p}.
$$
Finally,
\(p=\lceil\log_2(3000r)\rceil\) implies
\(2^{-p}\le(3000r)^{-1}\), so

$$
 \Prob(\mathcal B)\le 1/{1000}.
$$

\end{proof}

Lemma~\ref{cor:bernoulli} supplies the large-rank Bernoulli estimate.
The remaining sampling and rank cases will be treated in
Section~\ref{sec:main-proof}. The next section proves the entry estimates
needed for Proposition~\ref{prop:trace}.

\section{Joint cumulants of the transformed projection}
\label{sec:cumulants}

The proof of Proposition~\ref{prop:trace} expands the trace into
products of entries of the centered projection $R=P-\delta I_n$.
These entries are dependent. Joint cumulants organize their joint
moments into contributions from blocks of entries: the second
cumulant is covariance, and higher cumulants separate a collection's
dependence from products of smaller subcollections. The target estimate is
Lemma~\ref{lem:cumulants}, which bounds joint cumulants of the scaled
entries $nP_{ij}$ by $r$ times a factor depending only on their order
and gives a Walsh-symmetry condition for their vanishing. The same
estimates apply to $R$ at orders at least two; its first cumulants
vanish by centering.

The frame $V$ remains fixed throughout, as in Theorem~\ref{thm:main}.
Section~\ref{sec:graph-contractions} proves the deterministic
graph-sum bound that supplies the factor $r$.
Section~\ref{sec:cumulant-identities} develops the moment and product
cumulant identities. The moment identity will expand the trace
moments, while the product identity will turn the sign expansion of
$P$ into connected graph sums. Section~\ref{sec:projection-cumulants}
combines the graph bound with these identities, then uses Walsh
symmetry and computes the mean to obtain the estimates needed for $R$.

\subsection{Graph contractions}\label{sec:graph-contractions}

The sign expansion will leave deterministic sums of matrix-entry
products with shared indices. The graph representation of such a sum
is described first, followed by the bound needed for the graphs that arise.

\paragraph{A graph contraction.}
For positive integers $N_u,N_v,N_w$, let
$A\in\R^{N_v\times N_u}$, $B\in\R^{N_w\times N_v}$, and
$C\in\R^{N_w\times N_u}$ be fixed matrices. Consider
\[
 S=\sum_{i=1}^{N_u}\sum_{j=1}^{N_v}\sum_{k=1}^{N_w}
       C_{ki}B_{kj}A_{ji}.
\]
The shared indices are recorded by the following triangle:
\begin{center}
\begin{tikzpicture}[>=stealth]
\node (u) at (0,0) {$u$};
\node (v) at (2,0) {$v$};
\node (w) at (4,0) {$w$};
\draw[->] (u) -- node[above] {$A$} (v);
\draw[->] (v) -- node[above] {$B$} (w);
\draw[->] (u) to[bend right=30] node[below] {$C$} (w);
\end{tikzpicture}
\end{center}
A \emph{labeling} $(i_u,i_v,i_w)=(i,j,k)$ selects the summand
$C_{ki}B_{kj}A_{ji}$. Assigning $A$ to $u\to v$ means that this
edge contributes $A_{ji}$: its column index is at $u$ and its row
index is at $v$. The edge matrices and coordinate ranges are fixed;
the labeling varies. The \emph{graph contraction} sums the products
over all labelings. Thus the graph records shared indices among
matrix-entry factors; the edge matrices are not adjacency matrices.
Here contraction means index summation, not contracting an edge
of the graph.

The following definition fixes the general convention for later use.

\begin{definition}[Graph contraction]\label{def:graph-contraction}
The convention is the real-coordinate form of a \emph{graph of matrices} in
Mingo and Speicher \cite[Definition~8 and Eq.~(9), pp.~2279--2280]{mingo},
with optional scalar vertex weights included as diagonal factors.
Let $\mathcal V$ be the vertex set
and $\mathcal E$ the list of edge occurrences of a finite multigraph,
allowing parallel edges and loops. Each vertex $u$ has a positive integer
coordinate dimension $N_u$ and a range $\{1,\ldots,N_u\}$ from which
its label $i_u$ is chosen,
and each oriented edge $e=(u,v)$ carries
a fixed matrix $M_e:\R^{N_u}\to\R^{N_v}$. A \emph{vertex weight}
is an optional fixed scalar function $\omega_u$ contributing the
factor $\omega_u(i_u)$. The contraction is
\begin{equation}\label{eq:graph-contraction}
 \sum_{\substack{1\le i_u\le N_u\\u\in\mathcal V}}
       \prod_{e=(u,v)\in\mathcal E}(M_e)_{i_v,i_u}
       \prod_{u\in\mathcal V}\omega_u(i_u).
\end{equation}
Parallel edges give separate factors, while a loop at $u$ gives
$(M_e)_{i_u,i_u}$. A loop counts twice toward the degree, and
connectedness ignores orientation. The sum is unrestricted:
labels at distinct vertices may coincide when their coordinate
sets coincide. It is also signed: no absolute value is taken inside
the product or the sum.
\end{definition}

\paragraph{Boundary matrices.}
Return to the triangle and fix $i,k$, summing only $j$. This gives
\[
 T_{ki}=\sum_{j=1}^{N_v}C_{ki}B_{kj}A_{ji}
       =C_{ki}(BA)_{ki}.
\]
The vertices $u,w$ are the \emph{boundary vertices}, whose free
indices index the columns and rows of $T$; $v$ is an \emph{internal
vertex}, whose index is summed out. Thus the boundary matrix is
the entrywise product of $C$ and $BA$, whereas the full contraction
is $S=\one_{N_w}^TT\one_{N_u}$. Here $\one$ denotes an all-ones
vector of the indicated dimension. A weight $\omega_v$ at the
internal vertex inserts $\omega_v(j)$ into the sum, replacing $BA$
by $B\diag(\omega_v(1),\ldots,\omega_v(N_v))A$. This also explains
how vertex weights can be absorbed into diagonal edge matrices.

\begin{definition}[Boundary matrix]\label{def:boundary-matrix}
For a graph as in Definition~\ref{def:graph-contraction},
take all vertex weights to be one and choose
distinct vertices $s,t$. Fix $i_s=a$ and $i_t=b$ and sum all other
labels to define $T:\R^{N_s}\to\R^{N_t}$ by
\[
 T_{ba}=
 \sum_{(i_u)_{u\notin\{s,t\}}}
       \prod_{e=(u,v)\in\mathcal E}(M_e)_{i_v,i_u},
 \qquad i_s=a,\quad i_t=b,
\]
where every summed label ranges over its coordinate set. The vertices
$s$ and $t$ are called the input and output, respectively; summing their labels recovers
the full contraction as $\one_{N_t}^TT\one_{N_s}$.
For an input-output graph, this is the one-input, one-output
operator defined by Mingo and Speicher \cite[Eq.~(11), p.~2281]{mingo}.
Here the finite sum defines $T$ before any input-output hypothesis
is imposed; that hypothesis is needed for the operator-norm bound.
\end{definition}

\paragraph{Input-output form.}
\begin{definition}[Input-output graph]\label{def:input-output}
In the one-input, one-output case needed here, an \emph{input-output
graph} is a directed acyclic graph with distinct vertices $s,t$
such that $s$ has only outgoing edges, $t$ has only incoming edges,
and every vertex lies on a directed path from $s$ to $t$
\cite[Definition~10, p.~2280]{mingo}.
\end{definition}
The triangle above already has this
form with $s=u$ and $t=w$: its underlying undirected graph is a
cycle, but its displayed orientation has no directed cycle.

For an input-output graph in Definition~\ref{def:input-output},
Mingo and Speicher's operator bound
\cite[Theorem~11(1), p.~2281]{mingo} states that the boundary matrix satisfies
\[
 \|T\|_2\le\prod_{e\in\mathcal E}\|M_e\|_2.
\]
For the triangle, the right-hand side is
$\|A\|_2\|B\|_2\|C\|_2$; there is no factor $N_v$ from the
internal sum. The original graph need not already be in input-output
form. A \emph{bridge} is an edge whose deletion disconnects a
connected graph. Mingo and Speicher's graph conversion lemma
\cite[Lemma~16, p.~2286]{mingo} states that a connected graph without bridges can be modified
into input-output form with any two prescribed distinct vertices
as its sole input and output. The coordinate dimensions may differ,
and the edge matrices may be rectangular.

\paragraph{Vertex splitting.}
The allowed modifications are described in
\cite[Definition~13 and Proposition~14, p.~2285]{mingo}. In the triangle
example, split the internal vertex $v$ into $v_1,v_2$, attach $A$
to $v_1$ and $B$ to $v_2$, and join them by the identity $I_{N_v}$:
\begin{center}
\begin{tikzpicture}[>=stealth]
\node (u) at (0,0) {$u$};
\node (v1) at (1.8,0) {$v_1$};
\node (v2) at (3.6,0) {$v_2$};
\node (w) at (5.4,0) {$w$};
\draw[->] (u) -- node[above] {$A$} (v1);
\draw[->] (v1) -- node[above] {$I_{N_v}$} (v2);
\draw[->] (v2) -- node[above] {$B$} (w);
\draw[->] (u) to[bend right=25] node[below] {$C$} (w);
\end{tikzpicture}
\end{center}
The boundary entry is unchanged, because
\[
 \sum_{j_1,j_2=1}^{N_v}
 C_{ki}B_{k j_2}(I_{N_v})_{j_2j_1}A_{j_1i}
 =\sum_{j=1}^{N_v}C_{ki}B_{kj}A_{ji}=T_{ki}.
\]
The identity edge enforces equality of the two new labels. This
preserves each boundary entry, not only the sum $S$, and adds a
matrix of norm one. More generally, the prescribed modifications
distribute incident edges between copies of a vertex and connect
the copies by identity edges. One may also reverse an edge and
transpose its matrix, preserving its entry and operator norm.
With the designated boundary labels held fixed, these operations
preserve the boundary matrix, the full graph sum, and the product
of edge norms.

Lemma~\ref{lem:graph} will use the factorization $P_0=VV^T$ and
vertex splitting to make both boundary dimensions equal to $r$,
while the original indices range over $n$ coordinates.

\begin{lemma}\label{lem:graph}
Let a connected finite multigraph have positive even degrees. Assign
a matrix of operator norm at most one to each edge, with compatible
coordinate dimensions at its endpoints. Suppose one edge matrix is a
real rank-\(r\) orthogonal projection \(P_0=VV^T\), \(r\ge1\).
At each vertex also allow a coordinate weight of absolute value at most
one. The contraction of Definition~\ref{def:graph-contraction},
given by \eqref{eq:graph-contraction}, has absolute value
at most \(r\).
\end{lemma}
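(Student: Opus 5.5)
First absorb every vertex weight into an edge. At a vertex $u$ with weight $\omega_u$, pick one incident edge and multiply its matrix on the $u$ side by $\diag(\omega_u(1),\ldots,\omega_u(N_u))$. This diagonal matrix has operator norm at most one, so the edge norm stays at most one. Every vertex has positive degree, so an incident edge exists; for a loop either side works.

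Next factor the projection edge through $\R^r$. Say $P_0=VV^T$ sits on the edge $e_0=(x,y)$, possibly a loop. Replace it by a path $x\to s'\to t'\to y$ with matrices $V^T$ (from $\R^n$ to $\R^r$), $I_r$, and $V$, where $s',t'$ are two new vertices of dimension $r$. Summing the labels at $s',t'$ recovers $(P_0)_{i_y,i_x}$, so the contraction is unchanged. All new edges have norm at most one, since $\|V\|_2=\|V^T\|_2=1$.

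The new graph is still connected and has even degrees: $s',t'$ have degree $2$, and the degrees of $x,y$ are unchanged. A connected graph with all degrees even has an Eulerian circuit, so every edge lies on a cycle and there is no bridge. We may therefore apply the graph conversion lemma \cite[Lemma~16]{mingo} with input $s'$ and output $t'$. We assume, as the statement implicitly does, that the multigraph has at least one edge besides $P_0$; otherwise the edge is a loop and the contraction is $\tr P_0=r$ directly.

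The conversion uses only vertex splitting with identity edges and edge reversal with transposition. As explained before the lemma, these operations preserve the full graph sum and keep every edge norm at most one. Splitting may create copies of $s'$ or $t'$, but the designated input and output are still $r$-dimensional.

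The Mingo--Speicher operator bound \cite[Theorem~11(1)]{mingo} then gives a boundary matrix $T:\R^r\to\R^r$ with $\|T\|_2\le\prod_e\|M_e\|_2\le1$.

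Finally, the full contraction is $\one_r^TT\one_r$. If this is used directly, Cauchy--Schwarz only gives the bound $\|\one_r\|_2^2\|T\|_2\le r$, which is fine. A cleaner route is to notice that the $I_r$ edge from $s'$ to $t'$ can instead be kept as the closing constraint. That is, convert the graph with the edge $s'\to t'$ removed, taking $s'$ as input and $t'$ as output. The identity edge then forces the two boundary labels to be equal, and the contraction becomes $\tr T$ for the boundary operator $T$ of the remaining graph. This gives $|\tr T|\le r\|T\|_2\le r$.

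For this version I must check that the graph with the $I_r$ edge deleted is still bridgeless. That can fail, so the first version, which keeps the edge, is safer. There $|\one_r^TT\one_r|\le r\|T\|_2\le r$ follows just as well.

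The main obstacle I expect is verifying the hypotheses of the conversion lemma. The lemma needs a connected, bridgeless graph, and loops, parallel edges and rectangular matrices have to be handled; I will cite the remark that these are allowed. I also need to check that splitting $s'$ and $t'$ still leaves the boundary dimension equal to $r$, so that no factor $n$ enters.
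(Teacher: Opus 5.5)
Your proposal is correct and follows essentially the same route as the paper: you factor $P_0=VV^T$ through two $r$-dimensional vertices joined by $I_r$, use the even degrees to rule out bridges, apply Mingo--Speicher's Lemma~16 and Theorem~11(1) to obtain $\|T\|_2\le1$, and conclude with $|\one_r^TT\one_r|\le r$. The differences are cosmetic. You absorb each weight into an incident edge rather than a diagonal loop, which is fine provided you do not absorb a weight into the projection edge before factoring it (or you absorb it into $V^T$ or $V$ afterwards). Your separate single-loop case is unnecessary, since that loop simply becomes a three-edge cycle.
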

\begin{proof}
\begin{samepage}
The construction creates two boundary vertices of dimension $r$, without
changing the contraction. Mingo and Speicher's Lemma~16
\cite{mingo} will put the graph in input-output form with these
vertices as its boundaries. Their Theorem~11(1) will then bound
the associated matrix $T:\R^r\to\R^r$ by $\|T\|_2\le1$.
The remaining factor $r$ comes from summing the two boundary
labels, as verified at the end of the proof.
\par
\end{samepage}

A connected even-degree graph has no bridge. Indeed, if deleting one
edge separated a set of vertices from the rest, the degree sum on
that set would be twice its number of internal edges plus one.
This is odd, whereas a sum of even degrees is even.
Represent a vertex weight $\omega_v(i_v)$ by a loop carrying
$\diag(\omega_v)$. Its entry at that vertex is exactly
$\omega_v(i_v)$ and its norm is $\max_{i_v}|\omega_v(i_v)|\le1$.
Thus absorbing the weights preserves every summand of the
contraction. Each added loop contributes two to the degree, so
the graph remains connected with positive even degrees.

The projection factorization supplies a small coordinate space.
Choose the columns of $V$ to be an orthonormal basis of
$\operatorname{range}P_0$. Then $V^TV=I_r$, so
$\|Vx\|_2=\|x\|_2$ for $x\in\R^r$ and
$\|V\|_2=\|V^T\|_2=1$.
Replace the distinguished edge entry by
\[
 (P_0)_{ij}=\sum_{a=1}^r V_{ia}V_{ja}.
\]
This replaces the distinguished edge by a two-edge path through
a new degree-two vertex of dimension \(r\). Mingo and Speicher's
Lemma~16 requires two distinct vertices for the input and output,
not just this one.
Split it into two dimension-\(r\) vertices, put one incident edge
at each, and join them by \(I_r\). In coordinates, the replacement is
\[
 (P_0)_{ij}
   =\sum_{a,b=1}^r V_{ia}(I_r)_{ab}V_{jb}.
\]
If the distinguished projection acts on $\R^N$, the two rectangular
matrices are $V:\R^r\to\R^N$ and $V^T:\R^N\to\R^r$.
Call the new vertices $s,t$, with labels $a,b$, respectively.
Both labels range over exactly $r$ coordinates. Since
$(I_r)_{ab}$ is zero unless $a=b$, summing the new labels recovers
$(P_0)_{ij}$ for every fixed pair of old endpoint labels $i,j$.
All other factors are unchanged, so the full signed sum is unchanged.
The resulting graph is still connected with even degrees: the old
endpoint degrees are unchanged, and each new vertex has degree two.
If the original edge was a loop, the replacement is a three-edge
cycle through the old vertex, so the same degree check applies.
Thus the modified graph still has no bridge.

The graph is now connected and has no bridge, and $s,t$ are
distinct. These are precisely the hypotheses of Mingo and
Speicher's Lemma~16 \cite{mingo}, which gives a modification in
the input-output form of Definition~\ref{def:input-output},
with $s$ as its sole input and $t$ as its sole
output. The modifications reverse edges and transpose their
matrices, or split vertices using identity edges. By their
Proposition~14 \cite{mingo}, they preserve the full graph sum
and the product of edge norms. Transposition does not change a
norm, and every inserted identity has norm one. Splitting copies
the coordinate space of a vertex, so both boundary dimensions remain $r$.

In this input-output graph, use Definition~\ref{def:boundary-matrix}
to define \(T:\R^r\to\R^r\) by taking $T_{ba}$
to be the sum over all other labels with $a,b$ fixed at $s,t$. Mingo and
Speicher's Theorem~11(1) \cite{mingo} applies to these compatible
coordinate spaces and edge matrices. It gives
$\|T\|_2\le\prod_e\|M_e\|_2\le1$: the original edge matrices
and weight loops have norm at most one, and the factors $V,V^T$
and inserted identities have norm one.
Summing $a,b$ therefore gives the original full signed contraction
\(\one_r^T T\one_r\), where $\one_r$ is the all-ones vector.
Cauchy--Schwarz and the definition of the operator norm give
$|\one_r^TT\one_r|\le\|\one_r\|_2^2\|T\|_2\le r$.
Here $\|\one_r\|_2^2=r$; no ambient-dimension factor is introduced.
\end{proof}

\subsection{Cumulant identities}\label{sec:cumulant-identities}

With the graph bound in hand, the next step is to explain why connected graphs
arise in a joint cumulant of projection entries. Each entry expands
into products of signs. The Leonov--Shiryaev formula for cumulants
of products retains precisely the factor partitions that connect
all the prescribed products
\cite[Eq.~(IV.d), p.~323]{leonov_shiryaev1959}.
After the partition notation is introduced, this formula,
the moment-cumulant identity, and the needed vanishing rules are collected in
Lemma~\ref{lem:cumulant-identities}. All references to
Leonov and Shiryaev below use the English translation.

\begin{definition}[Joint cumulant]\label{def:joint-cumulant}
Let $q\ge1$ and let $Y_1,\ldots,Y_q$ be scalar random variables
for which all joint moments appearing below are finite.
Their joint cumulant is defined by
the classical moment-partition formula
\cite[Eq.~(II.c), p.~321]{leonov_shiryaev1959}
\begin{equation}\label{eq:cumulant-definition}
 \cum(Y_1,\ldots,Y_q)=
 \sum_{\pi\in\mathcal P([q])}(-1)^{|\pi|-1}(|\pi|-1)!
                       \prod_{B\in\pi}\E\prod_{j\in B}Y_j.
\end{equation}
Here $[q]=\{1,\ldots,q\}$, and $\mathcal P([q])$ denotes the set
of partitions of $[q]$ into nonempty disjoint blocks. For a partition
$\pi$, $|\pi|$ is its number of blocks, and $B\in\pi$ denotes one
such block. Each block contributes the joint moment
$\E(\prod_{j\in B}Y_j)$; the outer product multiplies these moments
over all blocks of $\pi$. The notation $\cum(Y_j:j\in B)$ denotes
the joint cumulant of the variables whose indices belong to $B$.
\end{definition}

For example, the partitions of $[2]$ give
\[
 \cum(Y_1,Y_2)=\E(Y_1Y_2)-(\E Y_1)(\E Y_2).
\]
Thus the second cumulant is covariance. Higher joint cumulants
separate the dependence of a collection from products of its
smaller subcollections.

A joint cumulant of products can be expressed as a sum of products
of joint cumulants of the individual factors. The indices in a
partition refer to occurrences in an expression, even when the
same random variable occurs more than once.
Let $\mathcal I$ be a finite set of occurrence positions, and let
$(Z_a)_{a\in\mathcal I}$ be scalar random variables with the joint
moments needed in the formulas below. Fix a partition
$\tau=\{I_1,\ldots,I_q\}$ of $\mathcal I$, and put
$Y_f=\prod_{a\in I_f}Z_a$ for $1\le f\le q$.
Thus $I_f$ records the positions of the factors in the $f$th product.
The variables $Z_a$ need not be distinct: different positions may
carry the same random variable.

\begin{definition}[Join of partitions]\label{def:partition-join}
For partitions $\sigma,\tau$ of a finite nonempty set $\mathcal I$,
the join of $\sigma$ and $\tau$,
$\sigma\vee\tau$, is the partition generated by both identifications:
two positions belong to the same join block if they can be
connected by a chain of $\sigma$-blocks or $\tau$-blocks.
Write $\one_{\mathcal I}=\{\mathcal I\}$ for the one-block partition.
For the prescribed product groups $\tau$, the condition
$\sigma\vee\tau=\one_{\mathcal I}$ expresses the
\emph{indecomposability} of the factor partition in
Leonov and Shiryaev's terminology
\cite[Remark~5, p.~323]{leonov_shiryaev1959}.
\end{definition}

In the product formula below, $\tau$ stays fixed, whereas $\sigma$
ranges over partitions of the factor positions to specify the
cumulants on the right-hand side.

\begin{lemma}[Moment and product cumulant identities]
\label{lem:cumulant-identities}
Let $q\ge1$. All variables below are scalar random variables for
which the joint moments in the stated formulas are finite.
Cumulants and joins are as in Definitions~\ref{def:joint-cumulant}
and~\ref{def:partition-join}.
\begin{enumerate}
\item For $Y_1,\ldots,Y_q$, the inverse, or moment-cumulant,
identity \cite[Eq.~(I.c), p.~321]{leonov_shiryaev1959} is
\begin{equation}\label{eq:moment-cumulant}
 \E\prod_{j=1}^qY_j
    =\sum_{\pi\in\mathcal P([q])}
       \prod_{B\in\pi}\cum(Y_j:j\in B).
\end{equation}
\item For a partition $\tau=\{I_1,\ldots,I_q\}$ of the factor
positions $\mathcal I$ and variables $(Z_a)_{a\in\mathcal I}$,
the Leonov--Shiryaev formula
\cite[Eq.~(IV.d), p.~323]{leonov_shiryaev1959}
reads
\begin{equation}\label{eq:product-cumulant}
 \cum\left(\prod_{a\in I_1}Z_a,\ldots,
                 \prod_{a\in I_q}Z_a\right)
 =\sum_{\substack{\sigma\in\mathcal P(\mathcal I)\\
                    \sigma\vee\tau=\one_{\mathcal I}}}
       \prod_{B\in\sigma}\cum(Z_a:a\in B).
\end{equation}
\item Joint cumulants are multilinear. A joint cumulant vanishes
when its arguments split into two nonempty independent groups.
At order $q\ge2$, adding deterministic constants to the arguments
does not change the cumulant. An odd-order cumulant of a random
variable symmetric about zero is zero.
\end{enumerate}
\end{lemma}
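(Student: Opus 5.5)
The three items are classical and I would check them directly from Definition~\ref{def:joint-cumulant}, using the exponential generating-function description of cumulants where it is cleaner. For item~1, I would apply Möbius inversion on the partition lattice $\mathcal P([q])$. The coefficient $(-1)^{|\pi|-1}(|\pi|-1)!$ in \eqref{eq:cumulant-definition} is the Möbius function $\mu(\pi,\one_{[q]})$. Formula \eqref{eq:cumulant-definition}, applied to every subset of $[q]$, therefore says that the multiplicative extension of $\cum$ is the Möbius transform of the multiplicative extension of the moments. Inverting this relation gives \eqref{eq:moment-cumulant}. Equivalently, I would compare coefficients of $t_1\cdots t_q$ in $\log\E\exp(\sum t_jY_j)$ and its exponential for bounded variables, then pass to the general case because both sides are polynomial identities in the finite moments.

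For item~2, I would follow the standard Leonov--Shiryaev argument. First apply \eqref{eq:moment-cumulant} to the $Z_a$ inside every block moment appearing in \eqref{eq:cumulant-definition} for $Y_1,\dots,Y_q$. This expresses $\cum(Y_1,\dots,Y_q)$ as
\[
 \sum_{\sigma\in\mathcal P(\mathcal I)}\prod_{B\in\sigma}\cum(Z_a:a\in B)\sum_{\pi}\mu(\pi,\one_{[q]}),
\]
where $\pi$ ranges over partitions of $[q]$ whose lift to $\mathcal I$ is coarser than $\sigma$. These $\pi$ form exactly the interval $[\bar\sigma,\one_{[q]}]$ in $\mathcal P([q])$, where $\bar\sigma$ is the partition of $[q]$ induced by $\sigma\vee\tau$. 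The inner Möbius sum equals $1$ if $\bar\sigma=\one_{[q]}$ and $0$ otherwise, and $\bar\sigma=\one_{[q]}$ is precisely the condition $\sigma\vee\tau=\one_{\mathcal I}$. This yields \eqref{eq:product-cumulant}. The main care point is this bookkeeping: distinct positions may carry the same variable, so everything must be indexed by positions rather than by variables. I expect that bookkeeping to be the only real obstacle, and the lattice-interval identification handles it.

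For item~3, multilinearity is immediate, since each term of \eqref{eq:cumulant-definition} is multilinear in the arguments. For the vanishing under independence, I would use the generating function. If the arguments split into independent groups $G_1,G_2$, then the log-moment generating function is a sum of a function of $(t_j)_{j\in G_1}$ and a function of $(t_j)_{j\in G_2}$. The mixed coefficient of $\prod_j t_j$ is therefore zero. This can be made rigorous for finite moments by a truncated Taylor expansion, or by induction on $q$ using \eqref{eq:moment-cumulant} and factorization of the block moments.

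For shift invariance at order $q\ge2$, I would combine multilinearity with the vanishing rule. Write $\cum(Y_1+c,Y_2,\dots)=\cum(Y_1,\dots)+\cum(c,Y_2,\dots)$. The second term vanishes because a constant is independent of everything and $q\ge2$ makes both groups nonempty. Finally, let $Y$ be symmetric about zero. Then $-Y$ has the same law as $Y$, so multilinearity gives $\cum(Y,\dots,Y)=(-1)^q\cum(Y,\dots,Y)$, and this vanishes for odd $q$.
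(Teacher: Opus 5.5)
Your proposal is correct. For item~2 it has the same skeleton as the paper: substitute \eqref{eq:moment-cumulant} into \eqref{eq:cumulant-definition} for the products, note that the admissible outer partitions are exactly those coarser than the partition of $[q]$ induced by $\sigma\vee\tau$, and show that the resulting coefficient is $\mathbf 1_{\{\sigma\vee\tau=\one_{\mathcal I}\}}$. The difference is the tool. You evaluate the combinatorics with the M\"obius calculus of the partition lattice, whereas the paper works with formal power series throughout.

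Concretely, the paper obtains item~1 by identifying $\cum(Y_1,\ldots,Y_q)$ as the coefficient of $t_1\cdots t_q$ in $\log M$, where $M(t)=\E\prod_j(1+t_jY_j)$ is a multilinear polynomial, and then exponentiating. It proves the cancellation $\sum_{j=1}^b S(b,j)(-1)^{j-1}(j-1)!=\mathbf 1_{\{b=1\}}$ from $\log(1+(e^z-1))=z$. This is precisely your identity $\sum_{\bar\sigma\le\pi\le\one_{[q]}}\mu(\pi,\one_{[q]})=\mathbf 1_{\{\bar\sigma=\one_{[q]}\}}$, since $[\bar\sigma,\one_{[q]}]\cong\mathcal P([b])$ with $b=|\sigma\vee\tau|$.

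Your route is shorter and treats items~1 and~2 uniformly. However, it imports two facts that the paper effectively proves in passing:
\begin{itemize}
\item the value $\mu(\pi,\one_{[q]})=(-1)^{|\pi|-1}(|\pi|-1)!$;
\item the factorization of each interval $[\pi,\sigma]$ into a product of smaller partition lattices, which your ``multiplicative extension'' step tacitly needs in order to invert.
\end{itemize}

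The polynomial $M$ also pays off in item~3. It involves only the finitely many multilinear moments assumed finite, so $\log M=\log M_{A_1}+\log M_{A_2}$ is a purely formal identity. Of your options there, the induction on $q$ via \eqref{eq:moment-cumulant} and factorization of block moments is rigorous under the stated hypothesis. The $\log\E\exp$ and truncated-Taylor versions need additional justification that $M$ makes unnecessary, since a moment generating function need not exist. The same applies to your bounded-variable alternative for item~1, which needs a density argument before the identity can be treated as a polynomial one.

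Your shift-invariance and odd-order symmetry arguments coincide with the paper's.
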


The join condition says that the blocks of $\sigma$, together with
the prescribed product groups, connect all $q$ products. For
instance, if a sign occurs twice in one product and once in another,
it still occupies three positions in $\mathcal I$; the formula
sums partitions of these three positions, not partitions of the
single sign name.
For a concrete example of the join condition, take
$I_1=\{1,2\}$ and $I_2=\{3,4\}$.
Then $\sigma=\{\{1,3\},\{2\},\{4\}\}$ joins the two product
groups through the block $\{1,3\}$ and contributes the term
\[
 \cum(Z_1,Z_3)\,\E Z_2\,\E Z_4,
\]
since a one-variable cumulant is its expectation. In contrast,
$\sigma=\{\{1,2\},\{3,4\}\}$ leaves the groups separate and is
excluded. The join condition determines which terms occur; their
cumulant factors may impose further restrictions or make them zero.

\begin{proof}
These formulas are used only as finite polynomial identities in the
joint moments. First, consider the moment polynomial
\[
 M(t_1,\ldots,t_q)
    =\E\prod_{j=1}^q(1+t_jY_j)
    =\sum_{B\subseteq[q]}\left(\E\prod_{j\in B}Y_j\right)
                         \prod_{j\in B}t_j,
\]
where $t_1,\ldots,t_q$ are formal variables and the empty product
is one. Since $M$ has constant term one, its formal logarithm is
\[
 \log M=\sum_{j\ge1}\frac{(-1)^{j-1}}{j}(M-1)^j.
\]
Every monomial of $M-1$ has positive degree. Thus terms with
$j>q$ cannot contribute to the coefficient of $t_1\cdots t_q$.
For $1\le j\le q$, expanding $(M-1)^j$ amounts to choosing
an ordered list of $j$ nonempty subsets of $[q]$, one from each
factor. To obtain $t_1\cdots t_q$, each index must occur exactly
once in this list: the chosen subsets must be disjoint and their
union must be $[q]$. They therefore form an ordered partition.
Each unordered partition into $j$ blocks has $j!$ such orderings,
all contributing the same product of moments. If
$[t_1\cdots t_q]F$ denotes the coefficient of $t_1\cdots t_q$
in a formal series $F$, this gives
\[
 [t_1\cdots t_q](M-1)^j
   =j!\sum_{\substack{\pi\in\mathcal P([q])\\|\pi|=j}}
          \prod_{B\in\pi}\E\prod_{i\in B}Y_i.
\]
The logarithm supplies the factor $(-1)^{j-1}/j$, so each
partition receives the coefficient $(-1)^{j-1}(j-1)!$.
Summing over $j$ shows that the coefficient of $t_1\cdots t_q$
in $\log M$ is $\cum(Y_1,\ldots,Y_q)$ by
\eqref{eq:cumulant-definition}.
The same coefficient argument for any nonempty subset $B$ gives
$\cum(Y_j:j\in B)$ as the coefficient of $\prod_{j\in B}t_j$
in $\log M$. Now compare the coefficient of $t_1\cdots t_q$
in $M=\exp(\log M)$. Only products of monomials on disjoint
nonempty subsets can contribute. For a partition $\pi$, its
$|\pi|!$ block orderings cancel the factorial denominator in
the exponential series. The resulting coefficient is the
right-hand side of \eqref{eq:moment-cumulant}, proving part~1.

To verify the coefficient in
\eqref{eq:product-cumulant}, apply \eqref{eq:cumulant-definition}
to the products $Y_1,\ldots,Y_q$, and expand each resulting joint
moment using \eqref{eq:moment-cumulant} for the factors $Z_a$.
For an outer block $C\subseteq[q]$, write $U_C$ for the union
$\bigcup_{f\in C}I_f$ of its factor positions. Since the $I_f$
are disjoint, $\prod_{f\in C}Y_f=\prod_{a\in U_C}Z_a$.
The two substitutions give
\[
\begin{aligned}
 \cum(Y_1,\ldots,Y_q)
 &=\sum_{\pi\in\mathcal P([q])}(-1)^{|\pi|-1}(|\pi|-1)!
       \prod_{C\in\pi}\E\prod_{a\in U_C}Z_a\\
 &=\sum_{\pi\in\mathcal P([q])}(-1)^{|\pi|-1}(|\pi|-1)!
       \prod_{C\in\pi}\left(
          \sum_{\sigma_C\in\mathcal P(U_C)}
             \prod_{B\in\sigma_C}\cum(Z_a:a\in B)\right).
\end{aligned}
\]
For fixed $\pi$, the sets $U_C$, $C\in\pi$, partition
$\mathcal I$. Choosing one inner partition $\sigma_C$ for each
$C$ therefore gives a partition $\sigma$ of $\mathcal I$ by
taking all their blocks together. Conversely, a fixed $\sigma$
arises this way exactly when every one of its blocks lies inside
one $U_C$. In that case the inner partitions are uniquely
determined, so the product of inner sums contributes it once.
Keeping occurrence labels distinct, fix a partition $\sigma$ and
collect the terms with cumulant product
$\prod_{B\in\sigma}\cum(Z_a:a\in B)$.
The admissible outer partitions are therefore precisely those that
do not split a group of products already connected by $\sigma$
and $\tau$.

Put $b=|\sigma\vee\tau|$. Each of these $b$ connected groups
contains whole product groups $I_f$, since the join includes all
identifications from $\tau$. The admissible outer partitions are
therefore exactly the ways to partition these $b$ groups further.
There are $S(b,j)$ ways to obtain $j$ outer blocks, and each has
coefficient $(-1)^{j-1}(j-1)!$ in
\eqref{eq:cumulant-definition}. Hence the coefficient of the fixed
cumulant product is
\[
 \sum_{j=1}^b S(b,j)(-1)^{j-1}(j-1)!=\boldsymbol{1}_{\{b=1\}},
\]
where \(S(b,j)\) is the number of partitions of a $b$-element set
into \(j\) nonempty blocks. To see the cancellation explicitly, use
\[
 \sum_{b\ge j}S(b,j)\frac{z^b}{b!}
       =\frac{(e^z-1)^j}{j!}.
\]
Indeed, fix $j\ge1$ and expand each factor as
$e^z-1=\sum_{k\ge1}z^k/k!$. Multiplication and collection by
total degree give
\[
 \begin{split}
 (e^z-1)^j
 &=\sum_{k_1,\ldots,k_j\ge1}
       \frac{z^{k_1+\cdots+k_j}}{k_1!\cdots k_j!}\\
 &=\sum_{b\ge j}\left(
       \sum_{\substack{k_1+\cdots+k_j=b\\k_1,\ldots,k_j\ge1}}
       \frac{b!}{k_1!\cdots k_j!}\right)\frac{z^b}{b!}.
 \end{split}
\]
For fixed positive block sizes $k_1,\ldots,k_j$ summing to $b$,
the factor $b!/(k_1!\cdots k_j!)$ counts ways to assign $b$
distinct labels to $j$ ordered blocks of those sizes: choose the
labels of the first block, then the second, and so on.
Summing over all positive size lists therefore counts all
partitions of these labels into $j$ ordered nonempty blocks.
Every partition counted by $S(b,j)$ has exactly $j!$ block
orderings, even if some block sizes coincide, since its blocks
are distinct subsets. Hence
\[
 \sum_{\substack{k_1+\cdots+k_j=b\\k_1,\ldots,k_j\ge1}}
       \frac{b!}{k_1!\cdots k_j!}=j!S(b,j).
\]
Thus the coefficient of $z^b/b!$ in $(e^z-1)^j$ is $j!S(b,j)$;
division by $j!$ proves the stated generating-function identity.
Consequently,
\[
 \begin{split}
 \sum_{b\ge1}\left[\sum_{j=1}^b
   S(b,j)(-1)^{j-1}(j-1)!\right]\frac{z^b}{b!}
 &=\sum_{j\ge1}\frac{(-1)^{j-1}}{j}(e^z-1)^j\\
 &=\log\bigl(1+(e^z-1)\bigr)=z.
 \end{split}
\]
This is the coefficient comparison in \(\log(\exp z)=z\):
the coefficient is one when $b=1$ and zero when $b>1$.
Thus precisely the partitions with a connected join survive,
each with coefficient one, proving part~2. The comparison is
formal; each fixed coefficient involves only finitely many terms.

For part~3, multilinearity follows directly from
\eqref{eq:cumulant-definition}, since each argument occurs in
exactly one moment factor in every summand.
Suppose the arguments $Y_1,\ldots,Y_q$ split into two
independent families indexed by nonempty disjoint sets $A_1,A_2$
with $A_1\cup A_2=[q]$. The usual logarithm argument for the
vanishing of mixed cumulants
\cite[\S3.B, lemma on p.~324]{leonov_shiryaev1959}
uses the same moment polynomial $M$. Independence gives
\[
 M=M_{A_1}M_{A_2},\qquad
 M_{A_i}=\E\prod_{j\in A_i}(1+t_jY_j),\quad i=1,2,
 \qquad \log M=\log M_{A_1}+\log M_{A_2}.
\]
Each logarithm on the right involves only the variables from its
own family, so neither contains $t_1\cdots t_q$. Its coefficient,
and hence the mixed joint cumulant, is zero. Only terms through
total degree $q$ are needed in these formal logarithms; no
convergence assumption on a moment-generating function is used.

For $q\ge2$, a deterministic argument forms an independent group
from all the other arguments, so the corresponding cumulant is
zero. Multilinearity then proves invariance under adding constants.
Finally, if $Z$ and $-Z$ have the same law, the order-$q$ cumulant
of $Z$ equals $(-1)^q$ times itself by multilinearity. It is
therefore zero for odd $q$.
\end{proof}

\subsection{Cumulants of the transformed projection}
\label{sec:projection-cumulants}

Recall that $K=(\F_2)^m$, $n=2^m$, and
$H_{ab}=n^{-1/2}(-1)^{\langle a,b\rangle}$ is the normalized Walsh
matrix. The deterministic frame $V\in\R^{n\times r}$ satisfies
$V^TV=I_r$, and $\delta=r/n$. All diagonal entries of $D_1,D_2$
are mutually independent uniform Rademacher signs. In this subsection,
expectations and cumulants are taken over these two sign diagonals,
with $V$ and the matrix-entry indices fixed.

The two ingredients can now be combined. Expanding the entries of $P$
gives products of signs with deterministic matrix coefficients.
The product formula in Lemma~\ref{lem:cumulant-identities}
groups these coefficients into
connected graph sums, each bounded by $r$ through Lemma~\ref{lem:graph}.
Counting the surviving partitions gives the order-dependent factor
in the following estimate. Walsh symmetry then restricts
the contributing index combinations, and the mean calculation permits
the passage from $P$ to $R$.

\begin{samepage}
\begin{lemma}\label{lem:cumulants}
For the projection \(P=HD_2HD_1VV^TD_1HD_2H\), every integer
\(q\ge1\), and arbitrary fixed pairs of indices
\((i_f,j_f)\in K\times K\), $1\le f\le q$, with repetitions allowed,
\begin{equation}\label{eq:cumulant-bound}
 \left|\cum(nP_{i_1j_1},\ldots,nP_{i_qj_q})\right|
                      \le (2q)^{12q}r.
\end{equation}
The cumulant is zero unless
\begin{equation}\label{eq:xor-constraint}
                 \sum_{f=1}^q(i_f+j_f)=0\quad\hbox{in }K.
\end{equation}
Moreover, \(\E P=\delta I_n\).
\end{lemma}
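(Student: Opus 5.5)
The plan is to expand each entry of \(P\) in the two sign layers, apply the product formula of Lemma~\ref{lem:cumulant-identities} to obtain connected graph sums, bound each sum with Lemma~\ref{lem:graph}, and count the surviving partitions. The vanishing condition and the mean will come separately from a Walsh character symmetry of the law of \(P\).

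\textbf{Expansion into sign products.} Write \(\epsilon_a=(D_2)_{aa}\), \(\xi_b=(D_1)_{bb}\) and \(P_0=VV^T\). Then
\[
 nP_{ij}=n\sum_{a,b,c,d\in K}H_{ia}H_{ab}(P_0)_{bc}H_{cd}H_{dj}\,
  \epsilon_a\xi_b\xi_c\epsilon_d .
\]
By multilinearity (Lemma~\ref{lem:cumulant-identities}, part~3), \(\cum(nP_{i_1j_1},\ldots,nP_{i_qj_q})\) is a sum over all labels of the deterministic coefficients times the cumulant of the \(q\) four-fold sign products. That cumulant is then expanded by \eqref{eq:product-cumulant} over the \(4q\) occurrence positions \(\mathcal I\), with \(\tau\) grouping the four positions of each product. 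Only restricted partitions \(\sigma\) survive:
\begin{itemize}
\item a block of \(\sigma\) containing two different sign variables has a vanishing cumulant by independence;
\item a block of odd size vanishes by symmetry;
\item so every surviving block carries one sign variable, occurs an even number of times, and satisfies \(\sigma\vee\tau=\one_{\mathcal I}\).
\end{itemize}

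\textbf{Graph bound for a fixed \(\sigma\).} Fix such a \(\sigma\). Identifying the labels inside each block turns the label sum into a graph contraction with one vertex per block of \(\sigma\).
\begin{itemize}
\item The inner edges carry \(H\) (norm one) or \(P_0\).
\item The boundary factors \(H_{i_fa}=n^{-1/2}(-1)^{\langle i_f,a\rangle}\) have fixed outer labels. Each one becomes \(n^{-1/2}\) times a vertex weight of modulus one at the \(\epsilon\)-vertex.
\item There are exactly \(2q\) boundary factors, so their \(n^{-q}\) cancels the prefactor \(n^q\).
\end{itemize}
The degrees are even: an \(\epsilon\)-block of size \(s\) has degree \(s\), one inner \(H\) edge per occurrence, and \(s\) is even. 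A \(\xi\)-block of size \(s\) has degree \(2s\), from one \(H\) edge and one \(P_0\) endpoint per occurrence. The graph is connected precisely because \(\sigma\vee\tau=\one_{\mathcal I}\), since each product is itself a path \(a\!-\!b\!-\!c\!-\!d\). It also contains a \(P_0\) edge. Lemma~\ref{lem:graph} therefore bounds the contraction by \(r\), and each \(\sigma\)-term is at most \(r\prod_{B\in\sigma}|\cum(\text{signs in }B)|\).

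\textbf{Counting.} What remains is crude combinatorics.
\begin{itemize}
\item The number of partitions of \(4q\) positions is at most \((4q)^{4q}\).
\item By \eqref{eq:cumulant-definition}, a single Rademacher cumulant of order \(s\) is at most \(\sum_\pi(|\pi|-1)!\le s^{2s}\).
\item Over a partition, the product of these is at most \((4q)^{8q}\).
\end{itemize}
This gives roughly \((4q)^{12q}r\), and the constant still has to be matched to \((2q)^{12q}\). I expect this bookkeeping to be the main nuisance rather than a conceptual obstacle. I would sharpen it using that surviving blocks have even size at least two, and that the \(\epsilon\) and \(\xi\) positions can only pair within their own type. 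If needed I would verify \(q=1\) directly, where \(\sigma\) is forced and the bound is trivial.

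\textbf{Vanishing and mean.} For \(y\in K\), \(\diag(\chi_y)H=HT_y\), where \(\chi_y(x)=(-1)^{\langle y,x\rangle}\) and \(T_y\) is translation by \(y\). Hence \(\diag(\chi_y)HD_2H\diag(\chi_y)=HT_yD_2T_yH\), and \(T_yD_2T_y\) has the same law as \(D_2\) and is independent of \(D_1\). So the family \((P_{ij})\) has the same joint law as \((\chi_y(i)\chi_y(j)P_{ij})\). Multilinearity then gives
\[
 \cum=\chi_y\Bigl(\textstyle\sum_f(i_f+j_f)\Bigr)\cum\quad\text{for every }y,
\]
so the cumulant vanishes unless \eqref{eq:xor-constraint} holds.

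For the mean, take \(q=1\): \(\E P_{ij}=0\) for \(i\ne j\). Multiplying \(D_2\) by a character \(\chi_x\) preserves its law and gives \(P\overset{d}{=}T_xPT_x\), so \(\E P_{ii}\) is independent of \(i\). Finally \(\tr P=r\) forces \(\E P_{ii}=r/n=\delta\).
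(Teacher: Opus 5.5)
Your proof of the bound and of the XOR condition follows the paper's proof. It uses the same four-sign expansion and the product formula restricted to even single-sign blocks. It builds one graph vertex per block, with the boundary characters as unimodular vertex weights, applies Lemma~\ref{lem:graph} for the factor $r$, and uses Walsh conjugation symmetry.

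The sharpening you propose is exactly the paper's count, and it gives the required constant exactly. Surviving blocks never mix layers, so $\sigma$ is a pair of partitions: one of the $2q$ $D_2$-positions and one of the $2q$ $D_1$-positions. There are at most $(2q)^{2q}\cdot(2q)^{2q}$ such pairs. Each layer's cumulant product is at most $\prod_A|A|^{2|A|}\le(2q)^{4q}$. The total is therefore $(2q)^{4q}(2q)^{8q}r=(2q)^{12q}r$, and no separate $q=1$ check is needed.

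One step of your symmetry argument is missing. Write $C_y=\diag(\chi_y)$. To apply your identity to $C_yPC_y$, you must insert $C_y^2=I$ on both sides of the middle factor, which then becomes $(C_yD_1)P_0(C_yD_1)$. So you must also note that $C_yD_1$ has the law of $D_1$, independently of $T_yD_2T_y$. The paper's identity $C_sHD_2HD_1V=H(T_sD_2T_s)H(C_sD_1)V$ records exactly this.

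For $\E P$ your route genuinely differs from the paper's. You use three steps:
\begin{itemize}
\item the $q=1$ case of the XOR condition kills the off-diagonal means;
\item translation invariance ($T_xPT_x$ has the law of $P$, via $D_2\mapsto C_xD_2$) equalizes the diagonal means;
\item $\tr P=r$ fixes the diagonal means at $\delta$.
\end{itemize}
The paper instead conditions on $D_1$, uses $\E[D_2BD_2\mid D_1]=\diag(B_{aa})$ and $\E B_{aa}=\sum_xH_{ax}^2(P_0)_{xx}=r/n$, and concludes with $H^2=I$. Your version needs no computation and reuses the symmetry already in hand. The paper's version does not depend on the vanishing argument, and it uses only that $H$ has entries of modulus $n^{-1/2}$ and satisfies $H^2=I$, not the group structure of $K$.
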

\end{samepage}
The sum in \eqref{eq:xor-constraint} is coordinatewise addition
modulo two, or XOR. This is a necessary condition for a nonzero
cumulant; it does not assert nonvanishing when the sum is zero.
\begin{proof}
The proof first establishes the bound by expanding into connected graph sums,
then uses Walsh symmetry for the vanishing condition, and finally
computes the mean.
The combination of cumulant expansions and graph-sum bounds also
occurs in the trace-cumulant analysis of V\'azquez-Becerra
\cite[Section~4]{vazquez_becerra}; the calculation below keeps the
entry indices fixed and the dependence on $q$ and $r$ explicit.
Write \(P_0=VV^T\) and \(\chi_i(a)=(-1)^{\langle i,a\rangle}\).
For $\ell=1,2$ and $a\in K$, denote the diagonal signs by
$\xi_{\ell,a}=(D_\ell)_{aa}$. The exact entry expansion is
\begin{equation}\label{eq:entry-expansion}
 nP_{ij}=\sum_{a,b,x,y\in K}
 \chi_i(a)\chi_j(b)\,
 \xi_{2,a}\xi_{2,b}\xi_{1,x}\xi_{1,y}
                  H_{ax}(P_0)_{xy}H_{yb}.
\end{equation}
The factor \(n\) cancels the two outer Hadamard normalizations.
Both inner Hadamard matrices in \eqref{eq:entry-expansion} remain normalized.

For fixed summation indices in $q$ copies of
\eqref{eq:entry-expansion}, let $\mathcal I$ be the set of the
$4q$ sign occurrence positions. Set $\tau=\{I_1,\ldots,I_q\}$,
where $I_f$ contains the occurrences of
$\xi_{2,a_f},\xi_{2,b_f},\xi_{1,x_f},\xi_{1,y_f}$ in the
$f$th product. These are four positions even if some labels agree.
By multilinearity, the joint cumulant can be expanded into a sum over
the $q$ copies of the four indices, with their deterministic
coefficients taken outside the cumulant. Apply
Lemma~\ref{lem:cumulant-identities}, in the form
\eqref{eq:product-cumulant}, and then sum the deterministic
matrix and character coefficients. A nonzero partition
has a partition \(\alpha\) of its \(2q\) middle-sign ($D_2$) occurrences
and a partition \(\beta\) of its \(2q\) input-sign ($D_1$) occurrences,
all blocks even. Equal-coordinate occurrences within a block are identified.
Indeed, the vanishing rules in Lemma~\ref{lem:cumulant-identities}
force each block to involve just one sign
coordinate in one layer, and symmetry of that sign makes every
odd-order cumulant zero.
Different blocks may receive equal labels; no inequality between
their labels is imposed. For example, two different blocks can
both use $\xi_{2,a}$; their factors are still two cumulants in
\eqref{eq:product-cumulant}. Thus block membership imposes equalities
within a block but does not specify the full equality pattern of
all labels.

To display the resulting sum, write
$\kappa_b=\cum(\xi,\ldots,\xi)$ with $b$ arguments, where $\xi$
is a single uniform Rademacher sign. For fixed $\alpha,\beta$, let
\[
 S_{\alpha,\beta}
 =\sum_{\substack{a_f,b_f,x_f,y_f\in K\\1\le f\le q}}^{\prime}
   \prod_{f=1}^q
   \chi_{i_f}(a_f)\chi_{j_f}(b_f)
   H_{a_fx_f}(P_0)_{x_fy_f}H_{y_fb_f}.
\]
The prime imposes equality of coordinate labels within each block
of $\alpha$ and of $\beta$, and imposes no inequalities between
different blocks. Every block cumulant is then $\kappa_b$, where
$b$ is its size, independently of its coordinate label. Grouping
the finite sums by their partitions therefore gives
\[
 \begin{aligned}
 \cum(nP_{i_1j_1},\ldots,nP_{i_qj_q})
 &=\sum_{\substack{\alpha,\beta\ \text{with even block sizes}\\
             (\alpha\cup\beta)\vee\tau=\one_{\mathcal I}}}
   \left(\prod_{A\in\alpha}\kappa_{|A|}\right)
   \left(\prod_{B\in\beta}\kappa_{|B|}\right)
   S_{\alpha,\beta}.
 \end{aligned}
\]
Here $\alpha\cup\beta$ is the partition of all $4q$ occurrence
positions obtained by taking the blocks in both layers. It remains to
bound the deterministic sum $S_{\alpha,\beta}$ for each partition
pair in this display.

Before the block identifications, the $f$th product of matrix
entries describes the path
\[
 a_f\ \overset{H}{\longleftrightarrow}\ x_f
 \ \overset{P_0}{\longleftrightarrow}\ y_f
 \ \overset{H}{\longleftrightarrow}\ b_f.
\]
The connected-join condition in
Definition~\ref{def:partition-join} connects all $q$ such paths.
Indeed, the groups $I_f$ connect the occurrences
along each such path, and the blocks of $\alpha,\beta$ identify
vertices between paths. Having one join block is exactly what
makes their union connected after these identifications.

Each middle-sign position $a_f,b_f$ is incident to one edge, whereas
each input-sign position $x_f,y_f$ is incident to two. Merging the
positions in a block adds their degrees; an edge whose endpoints
merge becomes a loop and still contributes twice. Thus the resulting
graph has one vertex per middle-sign block, of degree equal to its
even size, and one per input-sign block, of degree twice its size.
It is connected with positive even degrees. Its matrices are
normalized \(H\)'s and \(P_0\)'s, all of norm one, and it contains
a rank-$r$ projection edge $P_0$ from each original path.
At each middle-sign vertex the outer characters multiply to a
coordinate weight of modulus one; input-sign vertices have weight one.
After the identifications, every block vertex has one freely summed
label in $K$. Thus $S_{\alpha,\beta}$ is precisely an unrestricted
signed graph contraction in Definition~\ref{def:graph-contraction},
and Lemma~\ref{lem:graph} gives $|S_{\alpha,\beta}|\le r$.
Absolute values have not been inserted into the matrix-entry products.
The graph bound applies to the full signed sum; it does not bound
the sum of the absolute values of its individual summands.

The order-\(b\) cumulant of a single Rademacher sign has absolute value
at most \(b^{2b}\): Definition~\ref{def:joint-cumulant}, in the
form \eqref{eq:cumulant-definition}, has at most \(b^b\) partitions, each
factorial coefficient is at most \(b!\le b^b\), and each moment has
absolute value at most one. For one layer, this gives
\[
 \prod_{A\in\alpha}|\kappa_{|A|}|
 \le\prod_{A\in\alpha}|A|^{2|A|}
 \le(2q)^{2\sum_{A\in\alpha}|A|}
 =(2q)^{4q},
\]
since its block sizes sum to $2q$. The same bound holds for $\beta$.
A set of $2q$ positions has at most $(2q)^{2q}$ partitions: after
numbering the positions, assigning each position the least number
in its block encodes the partition
by a function from that set to itself. Hence there are at most
$(2q)^{4q}$ partition pairs, even before the even-size and connected-join
restrictions. Applying the triangle inequality to the displayed
partition sum gives
\[
 \begin{aligned}
 \left|\cum(nP_{i_1j_1},\ldots,nP_{i_qj_q})\right|
 &\le \underbrace{(2q)^{4q}}_{\text{partition pairs}}
     \quad\underbrace{(2q)^{4q}(2q)^{4q}}_{\text{two coefficient products}}
     \quad\underbrace{r}_{\text{graph sum}}\\
 &=(2q)^{12q}r,
 \end{aligned}
\]
which proves \eqref{eq:cumulant-bound}.

The proof of \eqref{eq:xor-constraint} uses a symmetry that preserves the
law of $P$ but can reverse the sign of the target cumulant whenever
the XOR sum is nonzero. For $s\in K$, let
\(C_s=\diag(\chi_s(i):i\in K)\) and let $T_s$ translate coordinates:
$(T_su)_a=u_{a+s}$ for $u\in\R^n$. In particular $T_s^2=I_n$.
The character identity
$\chi_i(a+s)=\chi_i(a)\chi_i(s)$ gives the first two Walsh identities
below by comparing matrix entries:
\begin{equation}\label{eq:walsh-conjugation}
 C_sH=HT_s,\qquad T_sH=HC_s,\qquad
 C_sHD_2HD_1V=H(T_sD_2T_s)H(C_sD_1)V.
\end{equation}
For the last identity, insert $T_s^2=I_n$ next to $D_2$ and use
the first two identities:
\[
 \begin{aligned}
 C_sHD_2HD_1V
 &=HT_sD_2HD_1V\\
 &=H(T_sD_2T_s)T_sHD_1V\\
 &=H(T_sD_2T_s)H(C_sD_1)V.
 \end{aligned}
\]
The diagonal entries of $T_sD_2T_s$ are those of $D_2$ permuted
by $a\mapsto a+s$. The entries of $C_sD_1$ are those of $D_1$
multiplied by fixed signs. Both operations preserve the independent
uniform sign law within each diagonal, and the two new diagonals
remain independent because each depends on a different original
diagonal. Thus
\(C_sPC_s\) has the same law as \(P\) for the same fixed \(V\).
Entrywise,
\[
 (C_sPC_s)_{ij}=\chi_s(i)\chi_s(j)P_{ij}
              =\chi_s(i+j)P_{ij}.
\]
Write $c=\cum(nP_{i_1j_1},\ldots,nP_{i_qj_q})$. Equality in law
and multilinearity then give, for every $s\in K$,
\[
 c=\left(\prod_{f=1}^q\chi_s(i_f+j_f)\right)c
   =\chi_s\!\left(\sum_f(i_f+j_f)\right)c.
\]
If
$q_0=\sum_f(i_f+j_f)\ne0$, choose a nonzero coordinate of $q_0$
and let $s$ be the corresponding binary basis vector. Then
$\chi_s(q_0)=-1$, so the cumulant equals its negative and is zero.
This proves \eqref{eq:xor-constraint}.

Finally set \(B=HD_1P_0D_1H\), so that $P=HD_2BD_2H$.
Condition on $D_1$, which fixes $B$. The $(a,b)$ entry of $D_2BD_2$
is $B_{ab}\xi_{2,a}\xi_{2,b}$. Its conditional expectation is
$B_{aa}$ if $a=b$ and zero otherwise, by independence and centering
of the signs. Thus
\[
 \E_{D_2}[D_2BD_2\mid D_1]
      =\diag(B_{aa}:a\in K),
\]
where the right side is the diagonal matrix with the indicated
entries. To average these entries over $D_1$, expand
\[
 B_{aa}=\sum_{x,y\in K}
        H_{ax}H_{ya}(P_0)_{xy}\xi_{1,x}\xi_{1,y}.
\]
Only $x=y$ survives in expectation. Since $H_{ax}^2=1/n$ and
$\sum_x(P_0)_{xx}=\tr P_0=r$, this gives
\[
 \E_{D_1}B_{aa}
     =\sum_x H_{ax}^2(P_0)_{xx}=\frac rn.
\]
Taking the two expectations in turn and using $H^2=I_n$ now yields
\[
 \E P
 =H\,\diag(\E_{D_1}B_{aa}:a\in K)\,H
 =H\left(\frac rn I_n\right)H
 =\delta I_n.
\]
\end{proof}

Recall \(R=P-\delta I_n\). Every entry of \(R\), including every diagonal
entry, has mean zero. Cumulants of order at least two are unchanged
by this centering, by Lemma~\ref{lem:cumulant-identities}, so
\eqref{eq:cumulant-bound}--\eqref{eq:xor-constraint} hold for \(R\) at those orders.
The zero diagonal mean will remove singleton loop blocks in
the trace expansion.

\begin{remark}[Trace cumulants and entry cumulants]
The bounded trace-cumulant theorem of V\'azquez-Becerra
\cite[Lemma~6 and Theorem~7]{vazquez_becerra} uses simultaneous
conjugation invariance of the relative mixing matrices under every
signed permutation, together with bounds on their entry moments.
Its constants may depend on the fixed word lengths. The proof above
uses the two independent sign layers for an arbitrary fixed $V$,
retains a rank-$r$ edge in each graph sum, and gives the explicit
order dependence in \eqref{eq:cumulant-bound}. Its vanishing condition
comes from Walsh character symmetry. The discrete Fourier sums in
\cite[Section~3.2]{vazquez_becerra} are instead evaluated through
quadratic forms and Gauss sums; they do not supply the binary-rank
count used in Section~\ref{sec:binary-constraints}.
\end{remark}

\section{The trace-moment estimate}\label{sec:trace-proof}

This section proves Proposition~\ref{prop:trace}: under its hypotheses,
\[
 \bigl|\E\tr(RW)^{2p}\bigr|\le K_0^p(\delta\theta)^p.
\]
Besides estimating each term in the trace expansion, the proof must control
how many terms contribute. Section~\ref{sec:equality-patterns} first
averages the sampling variables and groups cyclic positions with equal
row labels. It bounds the number of the resulting equality partitions
and the loops in their graphs. Section~\ref{sec:binary-constraints}
then expands the entries of $R$ by cumulants. Lemma~\ref{lem:binary-constraints}
counts the row labels allowed by their binary constraints, and
Proposition~\ref{prop:entry-contribution} bounds the contribution of
each equality partition using Lemma~\ref{lem:cumulants}.
Finally, Section~\ref{sec:trace-summation} multiplies these
counts and bounds and sums over their parameters. Keeping track of
the lost free labels and cumulant blocks allows the rank hypothesis
to absorb the powers of the moment order in the counts.

\subsection{Equality patterns of the sampling variables}\label{sec:equality-patterns}

Recall that $R=P-\delta I_n$, $W=E-\theta I_n$, and
$\delta=r/n$. The entries $E_{ii}$ are independent
Bernoulli$(\theta)$ variables, independent of $D_1,D_2$, and the row
labels lie in $K=(\mathbb F_2)^m$, with $|K|=n$. Write
$w_i=W_{ii}=E_{ii}-\theta$. Matrix multiplication and cyclic
reindexing of the diagonal factors give the original trace expansion
\begin{equation}\label{eq:original-trace-expansion}
\begin{split}
 \E\tr(RW)^{2p}
   &=\sum_{i_1,\ldots,i_{2p}\in K}
       \E_{D_1,D_2}\!\left[
                 \prod_{j=1}^{2p}R_{i_j i_{j+1}}\right]
       \E_E\!\left[\prod_{j=1}^{2p}w_{i_j}\right],
 \qquad i_{2p+1}=i_1.
\end{split}
\end{equation}
In the direct matrix product the $j$th diagonal factor is
$w_{i_{j+1}}$; its product is the same as the one displayed.
The two expectations separate because the selectors are independent
of both sign diagonals.

Every word $(i_1,\ldots,i_{2p})$ determines one equality partition
$\pi$ of the position set $[2p]$: positions belong to the same
block if and only if their full row labels in $K$ coincide.
For a block $u\in\pi$, let $m_u=|u|$. Independence across distinct
selector coordinates then gives
\[
 \E_E\prod_{j=1}^{2p}w_{i_j}
      =\prod_{u\in\pi}\E w^{m_u},
 \qquad w=\zeta-\theta,\quad \zeta\sim\operatorname{Bernoulli}(\theta).
\]
A singleton block contributes $\E w=0$, so only partitions with
\(m_u\ge2\) remain. Let $v=|\pi|$ be the number of blocks.
Since $\sum_u m_u=2p$, it follows that $1\le v\le p$. Write
\[
 v=p-s,\quad 0\le s\le p-1.
\]
Thus $s$ counts the loss of blocks relative to a partition into $p$
pairs. To display the regrouping explicitly, let $u_j$ be the block
containing position $j$, with $u_{2p+1}=u_1$. Choosing a word with
equality partition $\pi$ is equivalent to assigning distinct labels
to its blocks, so \eqref{eq:original-trace-expansion} becomes
\[
 \E\tr(RW)^{2p}
 =\sum_{\substack{\pi\in\mathcal P([2p])\\ |u|\ge2\ (u\in\pi)}}
   \left(\prod_{u\in\pi}\E w^{m_u}\right)
   \sum_{\substack{\iota:\pi\to K\\ \iota\ \mathrm{injective}}}
   \E_{D_1,D_2}\!\left[
      \prod_{j=1}^{2p}R_{\iota(u_j),\iota(u_{j+1})}\right].
\]
Here $\mathcal P([2p])$ denotes the set of all partitions of $[2p]$.
This is the standard decomposition of a graph sum into injective
sums over quotient graphs; see Male
\cite[Definition~2.5 and Eq.~(2.2)]{male_traffic}.
This finite combinatorial identity is used without assumptions of
traffic independence.
The outer partitions are counted in this subsection, and
the inner entry sum is estimated in the next.

Construct the quotient multigraph of $\pi$ by taking its blocks as vertices
and, for each $j\in[2p]$, placing an edge occurrence between the
blocks containing $j$ and $j+1$, with cyclic indexing.
Edges are treated as undirected because $R$ is symmetric, but
their occurrences remain distinct. A position contributes one
incoming and one outgoing half-edge, including when these form a
loop. The cyclic word visits every block and connects each block to
the next, so the quotient graph is connected. It has \(2p\) edge
occurrences and degrees \(2m_u\ge4\). Since
$\sum_u\deg u=2\sum_u m_u=4p$ and $v=p-s$, it follows that
\begin{equation}\label{eq:degree-excess}
                         \sum_u(\deg u-4)=4s.
\end{equation}
Unlike the unrestricted contraction in
Definition~\ref{def:graph-contraction}, the sum here is restricted
to distinct full row labels for its \(v\) vertices.
Let \(t\) count the vertices incident to a
\emph{nonloop} unordered edge of odd multiplicity, and let \(\ell\)
count loop occurrences. In particular, $t$ counts vertices, not odd
edges; a vertex is counted once even if several odd-multiplicity
edges meet there. Since $t\le v=p-s$, it follows that \(s+t\le p\).
For a centered Bernoulli variable \(w\) and an integer \(b\ge2\),
its two possible values give
\[
 \E w^b=\theta(1-\theta)^b+(1-\theta)(-\theta)^b.
\]
Taking absolute values and factoring out $\theta(1-\theta)$ yields
\begin{equation}\label{eq:selector-moment}
 |\E w^b|
 \le\theta(1-\theta)\big((1-\theta)^{b-1}+\theta^{b-1}\big)
 \le\theta(1-\theta)\le\theta.
\end{equation}
The bounds $(1-\theta)^{b-1}\le1-\theta$ and
$\theta^{b-1}\le\theta$ hold for every $0<\theta<1$.
Thus the selector coefficient satisfies
\[
 \left|\prod_{u\in\pi}\E w^{m_u}\right|
 \le\prod_{u\in\pi}\theta=\theta^v.
\]
These are ordinary moments of equality blocks, not selector cumulants.

For example, take $p=2$ and distinct labels $a,b$. The word
$(a,b,a,b)$ has blocks $\{1,3\},\{2,4\}$ and gives four parallel
edges between its two degree-four vertices. The word $(a,a,b,b)$
has blocks $\{1,2\},\{3,4\}$ and gives two connecting edges and one
loop at each vertex. Both have $s=t=0$, whereas their loop counts
are respectively zero and two. These examples also explain why a
label appearing twice produces degree four, not degree two.

The parameters $s$ and $t$ will control the graph count. The first
controls the total degree above four by \eqref{eq:degree-excess};
the second locates vertices where nonloop occurrences cannot all be
paired with parallel copies. Section~\ref{sec:binary-constraints}
will relate those vertices to larger cumulant blocks or binary
constraints in Lemma~\ref{lem:binary-constraints}. A
bound on $\ell$ is also needed there, to count pairings
of loop occurrences. The counting class is specified next, independently
of any row-label assignment.

\begin{definition}[Equality-partition counting class]
\label{def:equality-class}
For $p\ge2$, $0\le s\le p-1$ and $0\le t\le p-s$,
let $\mathcal Q_{p,s,t}$ be the set of partitions $\pi$ of the
fixed position set $[2p]$ with exactly $p-s$ blocks, every block of
size at least two, and exactly $t$ vertices incident to an unordered
nonloop edge of odd multiplicity in the quotient graph just
defined. The positions retain their numbers: partitions related
by rotating or reversing the cycle are not identified unless they
are the same set partition. Put $N_{p,s,t}=|\mathcal Q_{p,s,t}|$
and $L=4p+1$. For each $\pi$ in this class, $\ell$ denotes the
number of its loop occurrences.
\end{definition}

The loop bound in
\eqref{eq:equality-count} holds for every individual graph,
whereas its bound on $N_{p,s,t}$ counts the entire class.
In particular, \(N_{p,s,t}\) counts partitions before any
assignment of row labels.
The abbreviation $L=4p+1$ collects the factors depending on the
moment order. The useful feature of the count is its dependence on
$s+t$: its upper bound is $8p\,3^p$ times $L^{74(s+t)}$, so
the additional powers of $L$ depend only on $s+t$. This dependence will
allow the later dimension factors to compensate for the graph count.

\begin{lemma}\label{lem:count}
For the class in Definition~\ref{def:equality-class}, each
$\pi\in\mathcal Q_{p,s,t}$ obeys the
following loop bound, and the counting class obeys the size bound:
\begin{equation}\label{eq:equality-count}
 \ell\le4t+12s+2,\qquad
                N_{p,s,t}\le8p\,3^p L^{74(s+t)}.
\end{equation}
\end{lemma}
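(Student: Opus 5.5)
The plan has two parts: bound the loops in a single quotient graph, then count partitions by an explicit encoding whose cost is multiplicative in local defects.

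\textbf{Loop bound.} A loop occurrence at position $j$ means $j$ and $j+1$ lie in the same block $u$. Classify the vertices.

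If $u$ has degree exactly $4$, then $m_u=2$, so $u=\{j,j+1\}$ contains exactly one loop. Its other two half-edges go to the blocks of $j-1$ and $j+2$, which are single nonloop edges.

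Consider first the case where these two neighbours are distinct. Each of these edges has odd multiplicity unless it is paired with a parallel copy. A degree-$4$ vertex with one loop has only two remaining half-edges, so pairing forces both edges to join $u$ to the same neighbour. Hence if $u$ is not counted in $t$, both neighbours coincide.

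Consider next the case where the neighbours coincide. The two parallel edges $u$--$w$ come from the segment $j-1,j,j+1,j+2$ with $j-1,j+2\in w$. I would charge such a configuration to $w$: either $w$ has excess degree, which is charged to $s$, or $w$ has degree $4$. In the latter case $w=\{j-1,j+2\}$, and the cycle closes as $(w,u,u,w)$, which forces $p=2$ and $\ell\le2$. This is the source of the additive $+2$.

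For vertices of degree $>4$, the number of loops at $u$ is at most $\deg u/2\le 2+\tfrac12(\deg u-4)$. Summing $(\deg u-4)$ over all vertices gives $4s$ by \eqref{eq:degree-excess}. The remaining constant $2$ per such vertex is absorbed because the number of vertices with positive excess is at most $4s$. This yields a contribution $\le 2s+8s$ from high-degree vertices.

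Degree-$4$ loop vertices are counted in $t$ unless they are paired into a double edge to a neighbour $w$. A neighbour of degree $4$ is impossible except when $p=2$, and a neighbour of higher degree is again charged to $s$, each such $w$ absorbing at most $\deg w/2$ such neighbours. Collecting gives $\ell\le 4t+12s+2$. I would not optimise the constants, since the stated ones leave room.

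\textbf{Counting $N_{p,s,t}$.} I would encode $\pi$ by walking the cycle $1,\dots,2p$ and recording at each step whether the position opens a new block or returns to an old one. The unconstrained backbone choices contribute the factor $3^p$ together with the prefactor $8p$, coming from the choice of starting point and orientation and a few global bits. The model is a pair partition in which every block has size $2$ and every edge is doubled, i.e.\ no odd vertices; these are walks retracing a tree-like doubled structure. At such steps, the return target is forced by a stack, as in the standard Catalan/noncrossing encoding, and the step type costs a bounded number of options per pair, giving $3^p$.

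Every deviation from this model is an \emph{exceptional step}: a return to a block not on top of the stack, a block of size $>2$, a loop, or a step along an odd-multiplicity edge. Each exceptional step needs at most a position and a target, costing at most $L^{O(1)}$ choices.

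The key claim is that the number of exceptional steps is $O(s+t)$. Block sizes above $2$ contribute at most $2s$ extra returns, since $\sum_u(m_u-2)=2s$. The loops are controlled by the first part. Odd edges are incident to the $t$ counted vertices, and each such vertex has degree $\le 4+4s$. Any return that is not forced by the stack creates either an odd edge or excess degree. Bounding the exceptional steps by roughly $c(s+t)$ with $c\le 37$ and allowing $L^2$ choices each then gives $L^{74(s+t)}$.

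\textbf{Main obstacle.} The hardest step is justifying that every non-stack return is charged to $s$ or $t$ with a uniform constant. A crossing return between two degree-$4$ blocks can keep all multiplicities even, as in the word $(a,b,a,b)$, where $s=t=0$. I would first handle this by adding a bounded number of alternative return rules per step: for example, return to the block one or two positions down the stack, which accounts for patterns like $abab$. This would enlarge the base from $2$ to $3$ in $3^p$, which I expect is why the base is $3$. After that, I would verify that any remaining crossing creates an odd-multiplicity edge at one of its endpoints.
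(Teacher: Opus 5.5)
There is a genuine gap, mainly in the bound on $N_{p,s,t}$. Nothing in your stack picture produces the base $3$. A free record of \emph{open} or \emph{return} at each of the $2p$ steps ranges over at least the $C_p\sim4^p/(\sqrt{\pi}\,p^{3/2})$ Dyck sequences, and these exceed $8p\,3^p$ for large $p$; extra return targets only enlarge the count. The model is also not the $s=t=0$ class here. The quotient graph has $2p$ edge occurrences on $p-s$ vertices, so it is never a doubled tree, and the noncrossing word $(a_1,a_1,a_2,a_2,\dots,a_p,a_p)$ has $t=p$ for $p\ge3$.

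More seriously, the claim that exceptional returns number $O(s+t)$ is false, even with returns one or two places down the stack. The doubled cycle traversed twice, $(v_1,\dots,v_p,v_1,\dots,v_p)$, has $s=t=\ell=0$, yet every return goes to the oldest open block. For $k\ge2$ the word
\[
 (b,x_1,\dots,x_k,\,b,y_1,\dots,y_k,\,b,z_1,\dots,z_k,\,
  b,y_1,\dots,y_k,\,b,x_1,\dots,x_k,\,b,z_1,\dots,z_k)
\]
has $p=3k+3$, $s=2$ and $t=\ell=0$. Each of the $k$ returns in the second pass through $y_1,\dots,y_k$ targets a block with at least $k$ open blocks on each side in opening order. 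All but the first of these steps follow doubled edges between degree-four vertices, so they cannot be charged to odd edges or excess degree, and no bounded menu of fixed-depth rules covers them. The cost is then $L^{\Theta(p)}$, not $L^{O(s+t)}$.

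The missing idea is structural. Let $B$ be the set of odd-incident vertices together with the vertices of degree above four, so $|B|\le t+2s$ and $D_B=\sum_{u\in B}\deg u\le4t+12s$. Every vertex outside $B$ has degree four and even nonloop multiplicities, hence is internal (multiplicities $(2,2)$), a loop terminal, or a four-edge terminal. The rest of the graph is therefore a union of non-branching doubled paths. Contracting these to links leaves at most $8(s+t)$ retained vertices of total degree at most $36(s+t)$, whose degrees, edge pairing, ports and link lengths cost $L^{72(s+t)}$. The cyclic positions are then recovered from an Euler transition system, with $\prod_u(2m_u-1)!!\le3^pL^{2s}$ choices, and one of $4p$ starting directed edges. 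So $3^p$ counts the three transition pairings at each degree-four vertex. For $s=t=0$ only the doubled cycle and the doubled path with end loops occur, giving $8p\,3^p$.

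The loop bound also contains a false step. A degree-four neighbour $w$ of a loop terminal $u$ does not force the word $(w,u,u,w)$. In $(x,w,u,u,w,x)$ the vertex $w$ is internal and $p=3$. In $(b,x_1,x_2,x_2,x_1,b,y,y,b,z,z,b,c,c)$, where $p=7$ and $s=1$, the loop terminal $x_2$ reaches the excess vertex $b$ only through the internal vertex $x_1$, and such connecting paths can be arbitrarily long. Charging to the immediate neighbour therefore fails, and the case of a degree-four odd-incident neighbour is not treated at all.

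Instead, follow the doubled path, which cannot branch at internal vertices, until it reaches $B$. If it never does, the graph is a doubled path with end loops and $\ell\le2$. Otherwise distinct terminal paths and the loops at $B$ occupy disjoint pairs of half-edges at $B$, so $2T+2\ell_B\le D_B$. Since every loop outside $B$ sits at a loop terminal, $\ell\le\ell_B+T\le D_B/2\le2t+6s$.

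Your constants need repair too. With up to $4s$ excess vertices, your separate estimates total about $20s+t$, which exceeds $4t+12s+2$ when $t$ is small. Each excess $\deg u-4$ is even, so there are in fact at most $2s$ excess vertices.
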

\begin{proof}
The graph count uses the restricted local structure at vertices
of degree four whose nonloop multiplicities are even. This structure
first gives a bound on the loops. Doubled paths are then compressed,
and the retained graphs and their path lengths are counted, followed by
the traversals that recover the numbered cyclic positions.

\paragraph{The loop bound.}
Mark all odd-incident vertices and all vertices of degree greater
than four, and call their union \(B\). Every degree is even, so
each vertex of degree greater than four contributes at least two
to the total excess $4s$ in \eqref{eq:degree-excess}. There are
therefore at most \(2s\) such vertices. Moreover,
\[
 \sum_{u\in B}\deg u
 =4|B|+\sum_{u\in B}(\deg u-4)
 \le4(t+2s)+4s.
\]
Consequently,
\begin{equation}\label{eq:marked-degree}
 |B|\le t+2s,\qquad D_B:=\sum_{u\in B}\deg u\le4t+12s.
\end{equation}
Outside \(B\), degree is four and all nonloop multiplicities are
even. To list the possibilities, let $\ell_u$ be the number of loops
at such a vertex $u$, and let $a_{uz}$ count its edge occurrences
to another vertex $z$. A loop contributes two to the degree. In a
graph with more than one vertex, connectivity gives
\[
 4=2\ell_u+\sum_{z\ne u}a_{uz},\qquad
 a_{uz}\in2\mathbb Z_{\ge0},\qquad
 \sum_{z\ne u}a_{uz}>0.
\]
Hence $\ell_u$ is zero or one. If it is zero, the positive
nonloop multiplicities sum to four; if it is one, they sum to two.
The possibilities are exactly
\[
\begin{array}{c|c|c}
 \text{type} & \ell_u & \text{positive nonloop multiplicities}\\ \hline
 \text{internal} &0&(2,2)\\
 \text{loop terminal} &1&(2)\\
 \text{four-edge terminal} &0&(4)
\end{array}
\]
The internal type has two distinct neighbors. Each terminal type
has one neighbor.

Suppose first that \(B\ne\varnothing\). A four-edge terminal is directly adjacent
to \(B\). Otherwise its neighbor would also have degree four,
with all four edges returning to the terminal, giving an isolated
two-vertex component.

Starting instead at a loop terminal, follow
its doubled edge. At each internal vertex there is just one other
neighbor, so this doubled path continues without branching. If it
ended at another loop terminal, the path would be an entire
component disjoint from $B$; it cannot close into a cycle through
internal vertices, each of which already has its two neighbors.
It must therefore reach $B$.

Two terminal paths cannot merge at an internal vertex and then
continue to $B$: that would require a third neighbor. They therefore
use disjoint half-edges at $B$, at least two per terminal. Let $T$
be the number of terminal vertices and $\ell_B$ the number of
original loops at vertices in $B$. Each of these loops also uses
two half-edges at $B$, distinct from the terminal attachments. Thus
\[
 2T+2\ell_B\le D_B,\qquad T\le D_B/2.
\]
Every loop outside $B$ is the single loop of a loop terminal.
Consequently,
\[
 \ell=\ell_B+\#\{\text{loop terminals}\}
 \le\ell_B+T\le D_B.
\]

If \(B=\varnothing\), a four-edge terminal forces the whole graph
to be the two-vertex graph with four parallel edges, by the same
degree argument. Otherwise, in a graph with more than one vertex,
every nonloop edge group is doubled. Replace each such group by a
single edge and delete the loops. The resulting graph is connected,
with degree two at internal vertices and degree one at loop terminals.
Following edges can only close into a cycle or end in a path, since
there is no vertex at which to branch. Restoring the doubled edges
and loops gives a doubled cycle or a doubled path with one loop at
each endpoint. In the one-vertex case, degree four forces exactly
two loops. In all cases $\ell\le2$ when $B=\varnothing$.
Together with \eqref{eq:marked-degree}, this proves
the loop bound in \eqref{eq:equality-count}.

\paragraph{Counting the retained graphs.}
For the count, put $a=s+t$ and first suppose $a\ge1$.
Then $B\ne\varnothing$: either $t>0$ or the degree excess $4s$
is positive.
The count starts with the possible retained graphs and their expansions.
For each expanded graph, the next step counts the Euler transition systems
that recover the partition of the cyclic positions. These encode
closed traversals by pairing the incident half-edges at each vertex;
see Fleischner, Genest, and Jackson
\cite[Section~1]{fleischner_genest_jackson}.

Retain \(B\) and all terminal vertices, and contract maximal chains
of internal doubled-path vertices into distinguished doubled links.
Only their lengths can vary along these chains; each removed vertex
has degree four and the same two-neighbor structure.
A link may return to the same retained vertex or be parallel to
another edge; its paired half-edge ports must be retained.
Direct doubled edges may be given length zero. An isolated cycle
of removed vertices would be a component disjoint from the nonempty
set $B$, so connectivity excludes it.

Here a half-edge is an incidence of one edge at one endpoint;
a loop has two half-edges at its vertex. A port is the pair of
half-edges belonging to the two copies of a doubled path at one
retained endpoint. A returning path has two distinct ports at the
same vertex. Its two contracted edges are loops, but each loop
joins one half-edge of one port to one half-edge of the other.
These loops created by contraction are distinct from the original
loop occurrences counted by $\ell$.
The length records the number of removed internal vertices,
not the number of edges. Expanding a link of length $j$ therefore
replaces its two edges by $j+1$ doubled path segments.

The $p=2$ words above illustrate the simplest equality graphs.
The following example includes both a returning link and a link
parallel to other edges. It will illustrate the two counting steps:
encoding the graph by its links, then recovering the cyclic positions
from a traversal. Consider the word
\begin{equation}\label{eq:encoding-example-word}
 (b,x,y,b,y,x,b,z,c,b,c,z,b,c).
\end{equation}
Its last letter is followed by its first. Here $p=7$ and the five
vertex multiplicities are $m_b=5$, $m_c=3$, and
$m_x=m_y=m_z=2$. Thus $s=2$, all degrees are at least four, and
the graph is connected.

There are two copies of each segment of
$b$--$x$--$y$--$b$, two copies of each segment of
$b$--$z$--$c$, and four direct $b$--$c$ edges. Every nonloop
multiplicity is even, so $t=0$ and $B=\{b,c\}$.
The vertices $x,y,z$ are internal: each has two distinct neighbors
joined to it by doubled edges and has no loop.
Suppressing $x,y$ gives a returning link of length two at $b$;
suppressing $z$ gives a length-one link between $b,c$, parallel to
the four direct edges. Figure~\ref{fig:p7-contraction} shows the two
graphs. In the contracted graph, the half-edges of the two ports at
$b$ are named $\{b_1,b_2\}$ and $\{b_3,b_4\}$; the full labeling
and traversal of this example are given after the proof.

\input{figures/p7_contraction.tex}

\begin{samepage}
Now return to the arbitrary graph with $a=s+t\ge1$.
All $T$ terminal vertices are retained, and the bound
$T\le D_B/2$ has already been established. Contraction preserves the degree of every retained
vertex: the two incidences of a doubled path at each endpoint are
replaced by the two incidences of its link. This also holds at a
returning link, whose two ports remain distinct. Each terminal has
degree four.
The retained graph has \(q\) vertices and total degree \(D\), with
\begin{equation}\label{eq:retained-graph}
 q\le |B|+D_B/2\le8a,\qquad
 D=D_B+4T\le3D_B\le36a,\qquad q\le p,\quad D\le4p.
\end{equation}
\end{samepage}
The bounds $q\le p$ and $D\le4p$ hold because only vertices were
removed, while all retained degrees were preserved.
Temporarily label its \(q\) vertices and distinguish their half-edges.
The following data give a surjective encoding of its possible
expansions: every original graph has a code. A graph may have
several codes, which is harmless for an upper bound.

\begin{enumerate}
\item Choose the number of retained vertices and their degrees.
Since $1\le q\le p$, $q\le8a$, and every degree is at most $4p$,
\[
 \#\{\text{choices of }q\}\le p\le L^a,\qquad
 \#\{\text{degree vectors for fixed }q\}\le(4p+1)^q\le L^{8a}.
\]
Retain only even degrees at least four and totals obeying the
bounds on $D$ above. The degree vector determines $D$ and a list
of $D$ distinct half-edges, grouped by vertex.

\item Pair the half-edges to specify the edges of the retained
multigraph. Pairing the first unpaired half-edge leaves $D-1$
choices, then $D-3$, and so on. Therefore
\[
 \#\{\text{edge pairings}\}=(D-1)!!
 =\prod_{j=1}^{D/2}(2j-1)\le D^{D/2}\le L^{18a}.
\]
The last inequality uses $D\le4p<L$ and $D/2\le18a$.
Pairing two half-edges at the same vertex is allowed and produces a loop.

\item Designate disjoint pairs of half-edges at each vertex as ports.
For an upper bound, give each half-edge either no mate or one of
the $D$ half-edge names as a mate. This gives
\[
 \#\{\text{port choices}\}\le(D+1)^D\le L^{36a},
\]
using $D+1\le L$ and $D\le36a$. Retain only mutual pairings of
distinct half-edges at the same vertex, and require each port to
be edge-paired to one other whole port. Explicitly, after naming
the half-edges of the two ports, the link has the form
\[
 \underbrace{\{h_1,h_2\},\ \{k_1,k_2\}}_{\text{two ports}},
 \qquad
 \underbrace{\{h_1,k_1\},\ \{h_2,k_2\}}_{\text{two edge pairs}}.
\]
The four half-edges remain distinct even if the ports are at the
same vertex. The ports identify the link even when other edges
have the same endpoints.

\item Choose the length of each link and expand its internal
vertices in path order. A link uses four half-edges, so there are
at most $D/4\le9a$ links. A length counts removed vertices and is
at most $p$, since the original graph has $v\le p$ vertices. Hence
\[
 \#\{\text{length assignments}\}\le(p+1)^{D/4}\le L^{9a}.
\]
Keep only expansions with the prescribed vertex and edge counts.
\end{enumerate}
Multiplying the five upper bounds gives
\[
 \underbrace{L^a}_{\text{vertex count}}\,
 \underbrace{L^{8a}}_{\text{degrees}}\,
 \underbrace{L^{18a}}_{\text{edges}}\,
 \underbrace{L^{36a}}_{\text{ports}}\,
 \underbrace{L^{9a}}_{\text{lengths}}
 =L^{72a}
\]
as a bound on the number of graph encodings.

For decoding, give the retained vertices the names $1,\ldots,q$
and number their incident half-edges in the order of the degree
vector. The edge and port pairings above then specify every retained
edge and every link, including returning links.

To make the expansion deterministic, order the links by their
smallest half-edge labels. On each link, start at the port with
the smaller label and insert the prescribed number of internal
vertices in path order. Give the new vertices and half-edges fresh
names in this order. Thus the code fixes their names; no further
labeling choice is needed. Undesignated half-edges keep their original
edge pairings, even when they share endpoints with a designated link.

Every original graph yields such data by its specified contractions.
Conversely, valid data reconstruct its graph with distinguished
half-edges. Invalid and duplicate codes enlarge the count. Core
labels are auxiliary; no separately labeled set of original vertices
is chosen.

\paragraph{Recovering the cyclic positions.}
Now fix one expanded graph reconstructed by these data.
Its edges do not yet specify the equality partition: different
traversals of the same graph can place its vertices at different
cyclic positions. An Euler circuit, meaning a closed traversal
using every edge occurrence once, pairs arriving and departing
half-edges at each vertex. These local transition pairs specify
how the traversal continues through a vertex; the earlier edge
pairing specified how it passes between vertices.
For degree \(2m_u\), pairing the first half-edge has $2m_u-1$
choices, the next unpaired half-edge has $2m_u-3$, and so on.
Thus the number of local transition pairings is
\[
 (2m_u-1)!!=3\prod_{j=3}^{m_u}(2j-1)
                                  \le3(4p)^{m_u-2}.
\]
Here $m_u\ge2$ and each factor $2j-1$ is at most $4p$;
when $m_u=2$, the product on the right is empty and equals one.
Multiplying over the $v$ vertices gives
\[
 \prod_u(2m_u-1)!!
 \le3^v(4p)^{\sum_u(m_u-2)}
 =3^v(4p)^{2s}\le3^pL^{2s},
\]
where $\sum_u(m_u-2)=2p-2v=2s$ and $v\le p$.

Choose a starting directed edge in at most
\(4p\) ways: there are $2p$ edge occurrences and two directions
for each, including the two choices of departure half-edge for a
loop. To decode, write $h_j$ for the departing half-edge at step $j$
and $\bar h_j$ for the other half-edge of that edge occurrence.
The rule for continuing the traversal is
\[
 h_j\ \xrightarrow{\ \text{edge pairing}\ }\ \bar h_j
     \ \xrightarrow{\ \text{local pairing}\ }\ h_{j+1}.
\]
Both pairings are fixed, so the initial directed edge determines
every subsequent step. Local pairings can divide the edge occurrences
among several circuits; retain only systems that traverse all $2p$
edge occurrences in one circuit.

For such a system, let $u_j$ be the vertex at which $h_j$ departs.
The recovered equality partition is exactly
\[
 \bigl\{\{j\in[2p]:u_j=u\}:u\text{ is a vertex of the expanded graph}\bigr\}.
\]
Every original equality partition produces one of these systems:
its cyclic positions specify the successive edge occurrences, and
each visit pairs its arriving half-edge with its departing half-edge.
The edge at position 1 supplies the starting direction. Transferring
these pairs to an isomorphic decoded graph therefore recovers the
original partition.

Renaming the auxiliary vertices preserves every equality $u_j=u_k$,
so it does not change this partition. Their names and their positions
in the cyclic word are already fixed by the code and traversal; no
additional vertex permutation or placement factor is required.
Combining the graph and traversal counts gives
\[
 L^{72a}\,3^pL^{2s}\,(4p)
 \le4p\,3^pL^{74a},\qquad s\le a.
\]

For \(a=0\), the underlying graph is a doubled cycle or a doubled
path with a loop at each endpoint, as in the $B=\varnothing$
case of the loop bound. Since $s=0$, there are $v=p\ge2$ vertices,
so the one-vertex case does not arise. With $p$ vertices fixed,
there are at most these two graphs. Each vertex has exactly three
local transition pairings, giving
\[
 N_{p,0,0}\le
 \underbrace{2}_{\text{graphs}}\,
 \underbrace{4p}_{\text{starting directions}}\,
 \underbrace{3^p}_{\text{local pairings}}
 =8p\,3^p.
\]
This proves \eqref{eq:equality-count}.
\end{proof}

\paragraph{Decoding the example.}
Return to the word in \eqref{eq:encoding-example-word} and its
contraction in Figure~\ref{fig:p7-contraction}.
This example makes the encoding behind the bound on $N_{p,s,t}$
in Lemma~\ref{lem:count} concrete. Its original graph has no loops
or terminal vertices; the terminal-attachment argument for the loop
bound is a separate part of the proof. The correspondence with the
counting parameters is
\begin{center}
\begin{tabular}{p{0.18\textwidth}|p{0.28\textwidth}|p{0.42\textwidth}}
Object & In this example & Role in the proof\\ \hline
Equality class & $p=7$, $v=5$, $s=2$, $t=0$
 & Fourteen positions and five letters give $s=p-v=2$;
 all nonloop multiplicities are even, so the partition lies in
 $\mathcal Q_{7,2,0}$.\\
Marked vertices & $B=\{b,c\}$
 & These are the only vertices of degree greater than four;
 they remain when the internal vertices are removed.\\
Retained graph & $q=2$, $D=10+6=16$
 & These are the vertex and half-edge counts used in
 \eqref{eq:retained-graph} and the graph encoding.\\
Link lengths & $2$ and $1$
 & Expanding the returning link inserts $x,y$;
 expanding the other link inserts $z$.
\end{tabular}
\end{center}
Suppressing $x,y,z$ leaves $q=2$ retained vertices of degrees
ten and six. Label their half-edges $b_1,\ldots,b_{10}$ and
$c_1,\ldots,c_6$. One valid retained code has the edge pairs
\begin{equation}\label{eq:encoding-example-edges}
\begin{gathered}
 \{b_1,b_3\},\ \{b_2,b_4\},\
 \{b_5,c_1\},\ \{b_6,c_2\},\\
 \{b_7,c_3\},\ \{b_8,c_4\},\
 \{b_9,c_5\},\ \{b_{10},c_6\}.
\end{gathered}
\end{equation}
Designate $\{b_1,b_2\}$ and $\{b_3,b_4\}$ as the two ports
of a length-two returning link. Designate $\{b_5,b_6\}$ and
$\{c_1,c_2\}$ as the ports of a length-one link between $b,c$.
The four remaining edges have no designated ports. The first
link expands to the doubled path $b$--$x$--$y$--$b$,
and the second to $b$--$z$--$c$, parallel to the four direct
$b$--$c$ edges. A returning link is therefore distinguished by
its two ports at $b$, while a parallel link is distinguished
from the other edges by its designated half-edges.

\begin{samepage}
For actual transition data after expansion, name the fourteen
edge occurrences $e_1,\ldots,e_{14}$ in their traversal order in
\eqref{eq:encoding-example-word}; repeated endpoint pairs use
different copies. At each vertex the arriving/departing edge
pairs are
\begin{equation}\label{eq:encoding-example-transitions}
\begin{aligned}
 b &: (e_{14},e_1),(e_3,e_4),(e_6,e_7),
                    (e_9,e_{10}),(e_{12},e_{13}),\\
 c &: (e_8,e_9),(e_{10},e_{11}),(e_{13},e_{14}),\\
 x &: (e_1,e_2),(e_5,e_6),\qquad
 y : (e_2,e_3),(e_4,e_5),\\
 z &: (e_7,e_8),(e_{11},e_{12}).
\end{aligned}
\end{equation}
\end{samepage}
The two returning-link copies in
\eqref{eq:encoding-example-edges} expand along
$(e_1,e_2,e_3)$ and $(e_6,e_5,e_4)$, respectively.
The copies of the other link expand along $(e_7,e_8)$ and
$(e_{12},e_{11})$. The four undesignated edges become
$e_9,e_{10},e_{13},e_{14}$ in their listed order.
Start with $e_1$ directed from $b$ to $x$.
At $x$, the pair $(e_1,e_2)$ in
\eqref{eq:encoding-example-transitions} forces the next edge to be
$e_2$. At $y$, the pair $(e_2,e_3)$ selects $e_3$, which returns
to $b$. There the pair $(e_3,e_4)$ selects $e_4$. The first steps are
\[
 \underbrace{b}_{1}\xrightarrow{e_1}
 \underbrace{x}_{2}\xrightarrow{e_2}
 \underbrace{y}_{3}\xrightarrow{e_3}
 \underbrace{b}_{4}\xrightarrow{e_4}
 \underbrace{y}_{5}.
\]
The numbers beneath the vertices are positions in the cyclic word.
The return to $b$ already places positions $1,4$ in the same equality
block; the two visits to $y$ place positions $3,5$ in another.
Continuing to alternate edge traversals with the specified transitions
reconstructs the displayed word and hence the partition
\[
 \bigl\{\{1,4,7,10,13\},\{9,11,14\},
          \{2,6\},\{3,5\},\{8,12\}\bigr\}.
\]
All fourteen edge occurrences are traversed once. The example
belongs to $\mathcal Q_{7,2,0}$; the auxiliary names $b,c,x,y,z$
play no role in the resulting set partition.
This is the decoding used in the counting proof: once the graph
code, local transition pairs, and starting direction have been chosen,
the positions belonging to each vertex require no further choice.

\subsection{Cumulant partitions and binary constraints}\label{sec:binary-constraints}

Section~\ref{sec:equality-patterns} reduced the trace expansion to a
sum over equality partitions and injective row-label assignments.
Fix one equality partition and consider its contribution. There
are two tasks: count the cumulant partitions of its edge occurrences,
and count the row labels allowed by the XOR conditions of
Lemma~\ref{lem:cumulants}. The rank of these conditions will also
relate the original graph's odd-multiplicity edges to the sizes
of the cumulant blocks.

Fix $\pi\in\mathcal Q_{p,s,t}$ as in
Definition~\ref{def:equality-class}, with vertex set $\mathcal V=\pi$.
Recall that $v=|\mathcal V|=p-s$, that $t$ counts vertices incident
to odd-multiplicity nonloop edges, and that $\ell$ counts loop
occurrences. The notation $L=4p+1$ remains in force.
Let $u_j$ be the vertex containing position $j$, with $u_{2p+1}=u_1$.
The row labels lie in $K=(\F_2)^m$, where $n=2^m$; an injective
assignment $\iota:\mathcal V\to K$ gives distinct labels to distinct
vertices. Applying the moment formula \eqref{eq:moment-cumulant}
in Lemma~\ref{lem:cumulant-identities} gives
\begin{equation}\label{eq:entry-moment-expansion}
 \E_{D_1,D_2}\prod_{j=1}^{2p}R_{\iota(u_j),\iota(u_{j+1})}
   =\sum_{\rho\in\mathcal P([2p])}
      \prod_{C\in\rho}
        \cum\bigl(R_{\iota(u_j),\iota(u_{j+1})}:j\in C\bigr).
\end{equation}
The partition $\pi$ groups vertex positions and fixes which row
labels coincide; $\rho$ groups edge occurrences to form cumulants.
It does not identify the graph's vertices. Recall that
$R=P-\delta I_n$ has centered entries, including its diagonal
entries. Hence singleton blocks of $\rho$ vanish, including singleton
loops.

\begin{samepage}
A partition with no singleton blocks has
\[
                         b=p-d,\qquad 0\le d\le p-1
\]
blocks. Here $d$ measures the loss of cumulant blocks from the
partition into $p$ pairs, whereas $s$ measures the loss of vertex
blocks in $\pi$.
\end{samepage}

For each cumulant block, Lemma~\ref{lem:cumulants} requires the sum
of its endpoint labels to be zero in $K$. The following matrix
collects these equations. Its rank counts independent restrictions
on the row labels, rather than counting the equations with repetition.

\begin{definition}[Binary constraint matrix]\label{def:binary-constraint}
For an equality graph with vertex set $\mathcal V$ and successive
vertices $u_1,\ldots,u_{2p}$, with $u_{2p+1}=u_1$, and a
partition $\rho$ of its edge
occurrences, the binary constraint matrix has one column per vertex
and one row per block: the
sum of that block's endpoint incidence vectors over \(\F_2\).
Writing $\mathbf e_u$ for the coordinate vector of a graph vertex,
the row belonging to $C\in\rho$ is
\[
 \sum_{j\in C}(\mathbf e_{u_j}+\mathbf e_{u_{j+1}})
                       \quad\hbox{in }\F_2^{\mathcal V}.
\]
Repeated endpoints cancel in pairs; in particular a loop
contributes zero. Write $h$ for the rank of this matrix over $\F_2$.
\end{definition}

Some pair blocks must be removed while the labels are still
injective. Otherwise, counting all solutions of the binary system
would retain partitions that actually vanish for every labeling in
the trace expansion. The next lemma performs this removal and gives
the two consequences of the binary constraints used below.

\begin{lemma}\label{lem:binary-constraints}
Fix $\pi\in\mathcal Q_{p,s,t}$ and a partition $\rho$ of its edge
occurrences into $b=p-d$ blocks, all of size at least two. If a pair
block consists of adjacent nonparallel nonloop edges, or of a loop
and a nonloop edge, its cumulant vanishes for every injective
row-label assignment.

After excluding partitions with either kind of pair block, the XOR
conditions have exactly $n^{v-h}$ unrestricted row-label assignments,
and hence at most $n^{v-h}$ injective ones, where $v=p-s$ and $h$ is
as in Definition~\ref{def:binary-constraint}. Moreover,
\begin{equation}\label{eq:odd-incidence}
                              t\le12d+4h.
\end{equation}
The XOR conditions are necessary for a nonzero cumulant product;
no sufficiency is asserted.
\end{lemma}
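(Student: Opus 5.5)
The plan is to treat the three assertions in order: the vanishing of two kinds of pair blocks, the count of label assignments, and the incidence inequality \eqref{eq:odd-incidence}. Each is a short consequence of the XOR condition \eqref{eq:xor-constraint} in Lemma~\ref{lem:cumulants}, applied to $R$ at order two, where it holds because $R$ and $P$ have the same cumulants of order at least two.

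\emph{Vanishing pair blocks.} A pair block is a covariance $\cum(R_{\iota(a)\iota(b)},R_{\iota(c)\iota(d)})$. By \eqref{eq:xor-constraint}, it is zero unless $\iota(a)+\iota(b)+\iota(c)+\iota(d)=0$ in $K$.
\begin{itemize}
\item For adjacent nonparallel nonloop edges, write the edges as $\{a,b\},\{b,c\}$ with $a,b,c$ distinct vertices. The sum is $\iota(a)+\iota(c)$, which is nonzero because $\iota$ is injective.
\item For a loop $\{a,a\}$ and a nonloop edge $\{b,c\}$, the sum is $\iota(b)+\iota(c)\neq0$ for the same reason.
\end{itemize}
So these cumulants vanish for every injective $\iota$, and such $\rho$ can be discarded before counting labels.

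\emph{Counting labels.} The XOR conditions for all blocks of $\rho$ say exactly that $M\iota=0$, where $M$ is the binary constraint matrix of Definition~\ref{def:binary-constraint}. Here $\iota$ is viewed as a $\mathcal V\times m$ matrix over $\F_2$, and $M$ acts on the vertex index. The equations separate over the $m$ bit coordinates. Each coordinate ranges over a solution space of dimension $v-h$, so there are $2^{m(v-h)}=n^{v-h}$ solutions in total. Restricting to injective assignments can only decrease this number.

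\emph{The inequality $t\le12d+4h$.} Fix an odd-incident vertex $u$ and a nonloop edge $\{u,z\}$ of odd multiplicity. The occurrences of this edge lie in blocks of $\rho$. Since the multiplicity is odd, the occurrences cannot all sit in pair blocks whose partner is a parallel copy of the same edge. Hence one of two things happens.
\begin{enumerate}
\item Some occurrence lies in a block of size at least three. Since $\sum_C(|C|-2)=2d$, there are at most $2d$ such large blocks. Their total size is at most $2\cdot 2d+2d=6d$. These edge occurrences touch at most $12d$ vertices.
\item Some occurrence is paired with a nonparallel edge. After the exclusions above, the partner is neither a loop nor adjacent, so it is a disjoint edge $\{c,e\}$. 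The row $\mathbf e_u+\mathbf e_z+\mathbf e_c+\mathbf e_e$ then has support of exactly four vertices, one of which is $u$.
\end{enumerate}

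It remains to bound the union of supports in the second case. Let $\mathcal R$ be the set of such disjoint-pair rows, and choose a basis of $\operatorname{span}\mathcal R$ from within $\mathcal R$. This basis has at most $h$ elements, since its span is a subspace of the row space of $M$. Every row of $\mathcal R$ is a combination of the basis rows, so its support lies in the union of the basis supports. That union has at most $4h$ vertices. Adding the two cases gives $t\le12d+4h$.

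The only delicate point is this last step: bounding the union of supports by $4h$ rather than by the number of pair blocks. Choosing the basis inside $\mathcal R$, where every support has size exactly four, resolves it.
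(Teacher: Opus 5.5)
Your proposal is correct and follows the paper's proof essentially step for step: the XOR condition of Lemma~\ref{lem:cumulants} kills the two forbidden pair types, the per-bit kernel count gives $n^{v-h}$ labelings, and $t\le12d+4h$ comes from the at most $12d$ vertices touched by large-block occurrences together with a basis of weight-four disjoint-pair rows chosen inside that row set. The only difference is cosmetic: you bound the large-block occurrences by $6d$ by first noting there are at most $2d$ large blocks, whereas the paper uses $j\le3(j-2)$ for $j\ge3$.
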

\begin{proof}
For adjacent nonparallel edges $\{u,v\}$ and $\{u,w\}$, the
vertices $v,w$ are distinct. Their binary row is
\[
 (\mathbf e_u+\mathbf e_v)+(\mathbf e_u+\mathbf e_w)
       =\mathbf e_v+\mathbf e_w.
\]
Its XOR condition is $\iota(v)+\iota(w)=0$, or
$\iota(v)=\iota(w)$, which contradicts injectivity. A loop
contributes zero, so pairing it with a nonloop edge $\{v,w\}$
gives the same row and the same contradiction. In both cases
Lemma~\ref{lem:cumulants} makes the pair cumulant zero.

The remaining pair types are parallel nonloop occurrences,
disjoint nonloop edges, and two loops. The first and third types
have zero binary row. For disjoint edges $\{u,v\}$ and
$\{w,z\}$ the four endpoints are distinct, and the row is
\[
                  \mathbf e_u+\mathbf e_v+\mathbf e_w+\mathbf e_z.
\]
Thus every nonzero pair row has weight four, meaning four nonzero
coordinates.

Multiplying the row of a general block by the vertex labels gives
exactly its XOR condition \eqref{eq:xor-constraint}. For each of
the $m$ binary coordinates of $K=(\F_2)^m$, the matrix has rank
$h$ and its homogeneous system in $v$ unknowns has $2^{v-h}$
solutions. The coordinates can be chosen independently, giving
\[
                         (2^{v-h})^m=n^{v-h}.
\]
Imposing injectivity can only reduce this number.

To prove \eqref{eq:odd-incidence}, let $M$ count edge occurrences
in blocks of size at least three. Since there are $b=p-d$ blocks,
\[
 \sum_{C\in\rho}(|C|-2)=2p-2b=2d.
\]
Pair blocks contribute zero to this sum. For $j\ge3$,
$j\le3(j-2)$, so
\[
 M=\sum_{\substack{C\in\rho\\|C|\ge3}}|C|
   \le3\sum_{\substack{C\in\rho\\|C|\ge3}}(|C|-2)=6d.
\]
At most $2M\le12d$ vertices touch these occurrences.

Now consider an odd-incident vertex that touches no large-block
occurrence. Choose an incident nonloop edge of odd multiplicity.
All occurrences of that edge belong to pair blocks. They cannot
all be paired with parallel copies, because their number is odd.
At least one is paired with a different edge. The two forbidden
pair types have already been excluded, so that pair is disjoint.
This vertex therefore belongs to the support of a weight-four row.

Let $U$ be the union of the supports of these disjoint-pair rows.
Choose a basis from the rows themselves. It has at most $h$ rows,
since their span is contained in the full constraint row space.
Any coordinate that is zero in every basis row is zero in every
linear combination. Hence all disjoint-pair rows are supported
on the union of the basis supports, and
\[
                              |U|\le4h.
\]
Every odd-incident vertex either touches a large block or belongs
to $U$. This gives $t\le12d+4h$.
\end{proof}

The variables introduced so far record distinct reductions in the
number of free choices. Recall that $\kappa=n\theta$ is the mean
sample size and $\delta=r/n$. The following table summarizes the
roles of the four parameters; the exact factorization appears in
\eqref{eq:dimension-factor}.
\begin{center}
\begin{tabular}{c|p{0.43\textwidth}|p{0.36\textwidth}}
Parameter & What it counts & Factor or constraint used below\\ \hline
$s=p-v$ & Loss of selector equality blocks from the $p$-pair case
         & $\kappa^{-s}$\\
$d=p-b$ & Loss of entry-cumulant blocks from the $p$-pair case
         & $r^{-d}$\\
$h$ & Rank of the binary endpoint-constraint matrix
     & $n^{-h}$\\
$t$ & Vertices incident to odd-multiplicity nonloop edges
     & $t\le12d+4h$
\end{tabular}
\end{center}
The first two losses arise in different partitions. Identifying
selector positions reduces $v$, while putting more edge
occurrences in a cumulant reduces $b$. An independent binary
constraint removes one freely chosen label in $K$.
There is no separate dimensional factor for $t$; instead,
\eqref{eq:odd-incidence} forces every increase of $t$ to be
accounted for by $d$ or $h$. This is what allows the powers
of the moment order in the counts to be absorbed by the rank.

The partition count can now be combined with these dimension factors.
For fixed $\pi$, let $\mathcal R_{d,h}(\pi)$ denote the edge
partitions with $p-d$ blocks of size at least two, binary rank $h$,
and neither forbidden pair type from Lemma~\ref{lem:binary-constraints}.
This notation only groups the remaining terms of
\eqref{eq:entry-moment-expansion}; it does not require their
cumulants to be nonzero. Write
\[
 c(\rho)=\prod_{C\in\rho}(2|C|)^{12|C|},\qquad K_2=4^{24}.
\]
The weight $c(\rho)$ collects the order-dependent constants in
\eqref{eq:cumulant-bound}. Each pair block contributes $K_2$.
Counting with this weight keeps the partition count and its
cumulant constants in one estimate.

\begin{proposition}
\label{prop:entry-contribution}
Fix $\pi\in\mathcal Q_{p,s,t}$, and let $d,h$ be nonnegative
integers. With the notation above,
\begin{equation}\label{eq:entry-partition-count}
 \sum_{\rho\in\mathcal R_{d,h}(\pi)}c(\rho)
       \le (3K_2)^p L^{84d+12h+9s+2t+1}.
\end{equation}
The contribution of these partitions to the trace expansion,
after summing absolute values over their injective row-label
assignments and multiplying by the absolute selector coefficient,
is at most
\begin{equation}\label{eq:fixed-equality-contribution}
 (3K_2)^p L^{84d+12h+9s+2t+1}
       (\delta\theta)^p\kappa^{-s}r^{-d}n^{-h}.
\end{equation}
If $\mathcal R_{d,h}(\pi)$ is nonempty, its parameters satisfy
$t\le12d+4h$.
\end{proposition}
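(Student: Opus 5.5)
The plan is to prove the counting bound \eqref{eq:entry-partition-count} first, and then read off \eqref{eq:fixed-equality-contribution} by a direct product of the earlier estimates. The final assertion $t\le12d+4h$ is exactly \eqref{eq:odd-incidence} of Lemma~\ref{lem:binary-constraints}, which applies to any $\rho\in\mathcal R_{d,h}(\pi)$.

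\emph{Step 1: reduction of the contribution to the count.} Fix $\rho\in\mathcal R_{d,h}(\pi)$.
\begin{itemize}
\item Lemma~\ref{lem:cumulants}, applied to $R$ at orders at least two and rescaled by $n^{-|C|}$, bounds each block cumulant by $n^{-|C|}(2|C|)^{12|C|}r$. The product over the $b=p-d$ blocks is therefore at most $c(\rho)\,r^{p-d}n^{-2p}$.
\item By Lemma~\ref{lem:binary-constraints}, at most $n^{v-h}=n^{p-s-h}$ injective labelings can give a nonzero product.
\item The selector coefficient is at most $\theta^{v}=\theta^{p-s}$ by \eqref{eq:selector-moment}.
\end{itemize}
Multiplying, the factor $r^p\theta^p n^{-p}$ is $(\delta\theta)^p$. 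The leftover factor is $\theta^{-s}n^{-s}r^{-d}n^{-h}=\kappa^{-s}r^{-d}n^{-h}$. Summing over $\rho$ with \eqref{eq:entry-partition-count} then gives \eqref{eq:fixed-equality-contribution}.

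\emph{Step 2: large blocks.} Split $\rho$ into its pair blocks and its large blocks, meaning those of size at least three. As in the proof of Lemma~\ref{lem:binary-constraints}, the large blocks cover $M\le6d$ occurrences, and $M\le 2p$.
\begin{itemize}
\item Choosing these occurrences and their partition costs at most $(2p)^M M^M\le L^{12d}$.
\item Their weight $\prod(2|C|)^{12|C|}$ is at most $(4p)^{12M}\le L^{72d}$.
\end{itemize}
Together this gives the factor $L^{84d}$. Each pair block carries weight exactly $K_2$, and there are at most $p$ of them, which gives the factor $K_2^p$.

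\emph{Step 3: pair blocks.} It remains to count the pairings of the leftover occurrences into the three admissible types: parallel nonloop copies, two loops, and disjoint nonloop edges. The target is $3^pL^{12h+9s+2t+1}$. I would use the local structure from the proof of Lemma~\ref{lem:count}.
\begin{itemize}
\item Outside the marked set $B$, every vertex has degree four with doubled edges. A multiplicity-two edge whose two copies are paired together has no choice. A multiplicity-four edge, which occurs only in the two-vertex case, has three internal pairings. This produces at most $3^p$.
\item The choices made at, or near, vertices of large degree are bounded through $|B|\le t+2s$ and $D_B\le4t+12s$ from \eqref{eq:marked-degree}. They contribute a power $L^{O(s+t)}$.
\item Loops must be paired with loops. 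By \eqref{eq:equality-count}, $\ell\le4t+12s+2$, so loops cost at most $L^{\ell/2}$.
\item Occurrences in disjoint pairs have both endpoints in the set $U$ of Lemma~\ref{lem:binary-constraints}, with $|U|\le4h$. Vertices of $U$ outside $B$ have degree four, so the number of such occurrences is $O(h+s+t)$, and their partner choices cost $L^{O(h+s+t)}$.
\end{itemize}

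\emph{Main obstacle.} The delicate step is the exponent bookkeeping in Step 3. It must land on exactly $12h+9s+2t+1$ and avoid double counting between the loop pairings and the choices at $B$. Note that the naive loop count $L^{\ell/2}\le L^{2t+6s+1}$ already accounts for the $2t$ and the additive $1$. I would try to charge the remaining choices at $B$ to at most $3s$ further factors of $L$, and the disjoint-pair choices to at most $3$ per basis row of the constraint matrix, which gives $12h$. If the exponents come out slightly larger, the later summation absorbs any fixed multiple through the hypothesis $r\ge2L^{1000}$.
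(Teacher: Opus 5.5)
Your Step~1, the large-block count, the loop count $(2p)^{\ell/2}\le L^{2t+6s+1}$, and the final assertion all agree with the paper. The gap is Step~3. That step decides the stated exponent $12h+9s+2t+1$, and you leave it at $L^{O(s+t)}$ and $L^{O(h+s+t)}$.

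The route you sketch cannot close this gap. You charge the pairing choices at high-degree vertices to $|B|\le t+2s$ and $D_B\le4t+12s$, and you bound the disjoint-pair occurrences at $U\cap B$ the same way. Both produce factors $L^{ct}$ on top of the $L^{2t}$ already spent on loops. (Also, multiplicity-four edges are not confined to the two-vertex case: a four-edge terminal attached to $B$ carries one.)

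Your fallback does not rescue this either. In the completion of Proposition~\ref{prop:trace}, the constraint $t\le12d+4h$ turns the total $t$-coefficient $74+2=76$ into $912d$, so the $d$-exponent becomes $84+912=996$. One extra factor $L^{t}$ here would raise it to $1008$. The ratio $L^{1008}/r$ is not controlled by the hypothesis $r\ge2(4p+1)^{1000}$, so the geometric-series step fails. With the constant $1000$ there is no room for even one extra $L^{t}$; you would have to enlarge that constant and $R_0$, which changes Proposition~\ref{prop:trace}.

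The missing idea is to charge every non-loop pairing choice to the global degree excess $\sum_u(\deg u-4)=4s$ of \eqref{eq:degree-excess}. This quantity contains no $t$, so $B$ is not needed at all.

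\emph{Disjoint pairs.} Each such occurrence has both endpoints in $U$, so there are at most
\[
\tfrac12\sum_{u\in U}\deg u \;=\; 2|U|+\tfrac12\sum_{u\in U}(\deg u-4)\;\le\; 8h+2s
\]
candidates. Choosing $U$ costs at most $L^{4h}$, a subset costs $2^{8h+2s}$, and a pairing costs $(2p)^{4h+s}$. Together this is $L^{12h+2s}$.

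\emph{Parallel pairs.} Let $a_e$ be the remaining multiplicity of each nonloop edge. Then $(a_e-1)!!\le3(4p)^{(a_e-4)_+/2}$, and there are at most $p$ positive groups. The local inequality $\sum_{e\ni u}(a_e-4)_+\le\deg u-4$, summed over both endpoints of each edge, gives $\sum_e(a_e-4)_+\le2s$. The total cost is therefore $3^pL^{s}$.

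Multiplying $L^{12h+2s}\cdot L^{2t+6s+1}\cdot3^pL^{s}$ gives exactly $3^pL^{12h+9s+2t+1}$. The only $t$-dependence then comes from the loop bound $\ell\le4t+12s+2$.
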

\begin{proof}
If $\mathcal R_{d,h}(\pi)$ is empty, both bounds are immediate.
The count starts with the large blocks, then the disjoint pairs, loop
pairs, and parallel nonloop pairs. These are all the possibilities
left by Lemma~\ref{lem:binary-constraints}. Throughout the count,
edge occurrences retain their position numbers in $[2p]$.

\paragraph{Large blocks.}
As in the preceding proof, at most $M\le6d$ occurrences lie in
blocks of size at least three. A subset of at most $6d$ positions
can be recorded as its increasing list padded with zeros to length
$6d$, giving at most $(2p+1)^{6d}$ choices. A partition of the
chosen positions can be encoded by assigning each position a block
label from $[2p]$, giving at most $(2p)^{6d}$ choices. Thus there
are at most
\[
                 (2p+1)^{6d}(2p)^{6d}\le L^{12d}
\]
ways to choose and partition the large-block occurrences. These codes
may overcount; only blocks of size at least three with the
required total excess are retained. Their cumulant constants obey
\[
 \prod_{\substack{C\in\rho\\|C|\ge3}}(2|C|)^{12|C|}
       \le(4p)^{12M}\le L^{72d}.
\]
There are at most $p$ pair blocks, whose constants together are at
most $K_2^p$. When $d=0$ the large-block set is empty, and both
large-block counting factors equal one.

\paragraph{Disjoint pairs.}
Let $U$ again be the union of the supports of the disjoint-pair
rows. The preceding proof gives $|U|\le4h$. Listing these vertices
in a fixed order and padding the list gives at most
$(v+1)^{4h}\le L^{4h}$ choices for $U$.
Every occurrence in a disjoint pair has both endpoints in $U$.
Each such occurrence uses two incidences there, so the number
available is at most
\[
 \frac12\sum_{u\in U}\deg u
 =2|U|+\frac12\sum_{u\in U}(\deg u-4)
 \le2|U|+2s\le8h+2s.
\]
Here all degree excesses are nonnegative, and their sum over the
whole graph is $4s$ by \eqref{eq:degree-excess}.
Choosing a subset of the available occurrences costs at most
$2^{8h+2s}$. If the chosen subset has $2j$ occurrences, pairing it
costs $(2j-1)!!\le(2p)^j\le(2p)^{4h+s}$; the empty subset
has one pairing. Therefore
\[
 2^{8h+2s}(2p)^{4h+s}\le L^{8h+2s}.
\]
The last inequality uses $2\sqrt{2p}\le4p+1=L$.
Including the choice of $U$ gives
$L^{12h+2s}$. Pairings are discarded if they are not disjoint, if their
support is not the chosen $U$, or if their resulting full binary
rank is not $h$.

\paragraph{Loop pairs and parallel pairs.}
Once the large blocks and disjoint pairs have been chosen, every
remaining loop must be paired with another loop. If their number
is odd there is no such partition. Otherwise their pairing count
is at most $(2p)^{\ell/2}$, since their number is at most the
original loop count $\ell$. The loop bound
$\ell\le4t+12s+2$ from Lemma~\ref{lem:count} gives
\begin{equation}\label{eq:loop-pair-count}
                  (2p)^{\ell/2}\le L^{2t+6s+1}.
\end{equation}

The remaining nonloop occurrences can only pair with parallel
copies. For each unordered nonloop edge $e$, write $a_e$ for its
remaining multiplicity. An odd $a_e$ gives no valid pairing.
For positive even $a_e$, the number of pairings is $(a_e-1)!!$.
It equals one for $a_e=2$ and three for $a_e=4$; for $a_e\ge6$,
\[
 (a_e-1)!!=3\cdot5\cdot7\cdots(a_e-1)
       \le3(4p)^{(a_e-4)/2}.
\]
Thus the uniform bound is $3(4p)^{(a_e-4)_+/2}$, where
$(x)_+=\max\{x,0\}$. There are at most $p$ positive groups,
because each contains at least two of the $2p$ occurrences.

The original degree excess controls the exponents, even
though some occurrences have already been removed. At a vertex
$u$, if no remaining group has multiplicity above four, the sum
of $(a_e-4)_+$ over incident groups is zero. Otherwise their total
multiplicity is at most $\deg u$, and subtracting four for each
such group subtracts at least four. In either case,
\[
 \sum_{e\ni u}(a_e-4)_+\le\deg u-4.
\]
Every nonloop edge has two endpoints. Summing this inequality over
vertices and using \eqref{eq:degree-excess} gives
\[
 2\sum_e(a_e-4)_+\le\sum_u(\deg u-4)=4s,
 \qquad \sum_e(a_e-4)_+\le2s.
\]
All parallel-pair choices therefore cost at most
\[
 \prod_{e:a_e>0}(a_e-1)!!
       \le3^p(4p)^{\frac12\sum_e(a_e-4)_+}\le3^pL^s.
\]
Combining the large-block count, cumulant constants, and the three
pair counts yields
\[
 L^{12d}\,L^{72d}K_2^p\,L^{12h+2s}\,
 L^{2t+6s+1}\,3^pL^s
       =(3K_2)^pL^{84d+12h+9s+2t+1},
\]
which proves \eqref{eq:entry-partition-count}. The counts permit
incompatible choices, but every partition in
$\mathcal R_{d,h}(\pi)$ is covered. No common sign is assumed
for cumulant terms.

\paragraph{Row labels and dimension factors.}
For a block of size $q\ge2$, centering and multilinearity transfer
\eqref{eq:cumulant-bound} to the entries of $R$ as
\[
 \left|\cum(R_{i_1j_1},\ldots,R_{i_qj_q})\right|
       \le(2q)^{12q}r\,n^{-q}.
\]
There are $b=p-d$ blocks and $2p$ occurrences in total. Their
cumulant product is therefore bounded in absolute value by
$c(\rho)r^{p-d}n^{-2p}$, and vanishes unless every XOR condition
holds. After the forbidden pair
types have been removed, Lemma~\ref{lem:binary-constraints} allows
injectivity to be relaxed in this absolute upper bound, with at
most $n^{v-h}$ labels. The selector coefficient from
\eqref{eq:selector-moment} is at most $\theta^v$.
Their combined dimension factor is exactly
\begin{equation}\label{eq:dimension-factor}
 \theta^v r^{p-d}n^{-2p}n^{v-h}
            =(\delta\theta)^p\kappa^{-s}r^{-d}n^{-h}.
\end{equation}
To see the substitutions, use $v=p-s$, $\delta=r/n$, and
$\kappa=n\theta$:
\[
 \theta^{p-s}r^{p-d}n^{-p-s-h}
       =\left(\frac{r\theta}{n}\right)^p
          (n\theta)^{-s}r^{-d}n^{-h}.
\]
Multiplying \eqref{eq:entry-partition-count} by this factor proves
\eqref{eq:fixed-equality-contribution}. The parameter constraint is
\eqref{eq:odd-incidence} in Lemma~\ref{lem:binary-constraints}.
\end{proof}

For comparison, the case $v=b=p$ and $h=0$ contributes
$\theta^pr^pn^{-p}=(\delta\theta)^p$ before its combinatorial
coefficient. Reducing $v$ by one removes one selector factor
$\theta$ and one free-label factor $n$, hence costs $\kappa^{-1}$.
Reducing $b$ by one removes one factor $r$; increasing $h$ by one
removes one factor $n$. This gives every exponent in
\eqref{eq:dimension-factor} directly.

\subsection{Summation of the trace expansion}\label{sec:trace-summation}

The counts from Sections~\ref{sec:equality-patterns}
and~\ref{sec:binary-constraints} can now be combined. The constraint $t\le12d+4h$
will eliminate $t$, leaving three sums whose dimension factors
are controlled by the rank hypothesis in Proposition~\ref{prop:trace}.

\begin{proof}[Completion of the proof of Proposition~\ref{prop:trace}]
Recall that $A=RW$, $\delta=r/n$, and
\[
 L=4p+1,\qquad \kappa=n\theta,\qquad
 K_2=4^{24},\qquad K_0=576K_2.
\]
Here $s=p-v$ and $d=p-b$ measure the losses of vertex blocks
and cumulant blocks, respectively; $h$ is the binary constraint
rank, and $t$ counts the odd-incident vertices.

For each $\pi\in\mathcal Q_{p,s,t}$, apply
Proposition~\ref{prop:entry-contribution}, whose bound
\eqref{eq:fixed-equality-contribution} combines
\eqref{eq:entry-partition-count} with \eqref{eq:dimension-factor}.
Multiply this by the equality-partition count \eqref{eq:equality-count}
from Lemma~\ref{lem:count}. The powers of $L$ and the remaining
combinatorial factors combine as follows:
\[
 \begin{aligned}
 74(s+t)+(84d+12h+9s+2t+1)
       &=83s+76t+84d+12h+1,\\
 8p\,3^p(3K_2)^p&=8p(9K_2)^p.
 \end{aligned}
\]
Taking the last factor $L$ outside the sum, the finite expansion
and the triangle inequalities in Proposition~\ref{prop:entry-contribution}
bound \(|\E\tr A^{2p}|\) by
\begin{equation}\label{eq:trace-sum}
 8pL(9K_2)^p(\delta\theta)^p
  \sum_{s,t,d,h}L^{83s+76t+84d+12h}\kappa^{-s}r^{-d}n^{-h},
\end{equation}
where the sum runs over the feasible equality-partition parameters
$s,t$ and cumulant-partition parameters $d,h$. Since $v=p-s$
and $b=p-d$ lie between one and $p$, the block losses lie between
zero and $p-1$. Also, $t\le v$, and the binary matrix has $b$ rows
and $v$ columns, so $h\le\min\{v,b\}$. Together with
Lemma~\ref{lem:binary-constraints}, these observations give
\[
 \begin{gathered}
  0\le s,d\le p-1,\qquad 0\le t\le p-s,\\
  0\le h\le\min\{p-s,p-d\},\qquad t\le12d+4h.
 \end{gathered}
\]

For fixed $s,d,h$, the feasible $t$-range may be enlarged to all
integers from zero to $T=12d+4h$, because every term in the upper
bound is nonnegative. To sum these powers, factor out the largest
one and use $L\ge9$:
\[
 \sum_{t=0}^{T}L^{76t}
   =L^{76T}\sum_{j=0}^{T}L^{-76j}
   \le\frac{L^{76T}}{1-L^{-76}}
   \le2L^{76T}.
\]
Since $76T=912d+304h$, this gives
\[
 \sum_{t=0}^{12d+4h}L^{76t}\le2L^{912d+304h}.
\]
Thus the factor $8pL$ in \eqref{eq:trace-sum} becomes $16pL$,
and the remaining powers of $L$ have exponents
$83s+(84+912)d+(12+304)h=83s+996d+316h$.

The hypotheses give $\kappa\ge r$, and $n\ge r$ because the
frame has $r$ orthonormal columns in $\R^n$. Since $s,d,h$ are
nonnegative and each of $83,996,316$ is at most $1000$,
\[
 \frac{L^{83s+996d+316h}}{\kappa^s r^d n^h}
   \le\frac{L^{83s+996d+316h}}{r^{s+d+h}}
   \le\left(\frac{L^{1000}}r\right)^{s+d+h}.
\]
All three parameter ranges can now be extended to nonnegative
integers, again only increasing the upper bound. Consequently
\eqref{eq:trace-sum} is at most
\[
 16pL(9K_2)^p(\delta\theta)^p
          \sum_{s,d,h\ge0}\left(\frac{L^{1000}}r\right)^{s+d+h}.
\]
Under \eqref{eq:trace-hypotheses}, $r\ge2L^{1000}$, so the common
ratio is at most $1/2$. The enlarged sum factors into three
convergent geometric series:
\[
 \sum_{s,d,h\ge0}\left(\frac{L^{1000}}r\right)^{s+d+h}
   =\left(\sum_{j\ge0}\left(\frac{L^{1000}}r\right)^j\right)^3
   =\left(1-\frac{L^{1000}}r\right)^{-3}\le8.
\]
Finally
\[
 128pL\le640p^2\le64^p\qquad(p\ge2);
\]
the first inequality uses $L=4p+1\le5p$. The last holds at $p=2$,
where $640p^2=2560\le4096=64^p$; when $p$ increases by one,
the quantity $640p^2$ grows by a factor $((p+1)/p)^2\le9/4<64$.
Therefore
\[
 \bigl|\E\tr A^{2p}\bigr|
    \le128pL(9K_2)^p(\delta\theta)^p
    \le(64\cdot9K_2)^p(\delta\theta)^p
    =K_0^p(\delta\theta)^p,
\]
which is \eqref{eq:trace-bound}.
\end{proof}

\section{Proof of the main theorem}\label{sec:main-proof}

Theorem~\ref{thm:main} requires a uniform sample of a deterministic
size, whereas Lemma~\ref{cor:bernoulli} controls Bernoulli sampling
and applies only above a rank threshold. These two
restrictions are addressed separately. The first subsection transfers the Bernoulli
estimate to a fixed sample size by coupling two nearby densities.
The second supplies a second-moment bound for the remaining ranks.
The final subsection checks these estimates at the prescribed sample
size in \eqref{eq:prescribed-width} for every rank.

Recall that $X=HD_2HD_1V$ has orthonormal columns. By
\eqref{eq:sampled-gram}, the matrix to control is
$(n/k)X^TS_JS_J^TX$, where $J$ is a uniform $k$-element coordinate
sample independent of the sign diagonals. Full sampling gives $I_r$
exactly; the estimates below control the error when fewer coordinates are used.

\subsection{From Bernoulli sampling to a fixed sample size}

The coupling below uses one ordering of independent uniform
variables. Its Bernoulli marginals give upper and lower bounds
for the same fixed-size Gram matrix. Near full sampling, the
orthonormality of $X$ supplies the upper bound directly.
For a coordinate set $B$, write
$E_B=\operatorname{diag}(\one_{\{i\in B\}}:i\in K)$; in particular,
$E_J=S_JS_J^T$. The notation $M\preceq N$ means that $N-M$ is positive
semidefinite, so this order compares every quadratic form.

\begin{lemma}\label{lem:sampling}
Let \(X\) be an orthonormal frame, possibly random and independent
of sampling. For \(0<\varepsilon<1\) and an integer \(1\le k\le n\), put
\[
 \alpha=\varepsilon/4,\qquad
 \theta_-=(1-\alpha)k/n,\qquad q_+=(1+\alpha)k/n.
\]
Let $\gamma\ge0$. Suppose the normalized Bernoulli Gram of density
$\theta_-$ has error at most $\alpha$ except with probability $\gamma$.
If $q_+<1$, set $\theta_+=q_+$ and suppose the same guarantee
holds at density $\theta_+$. Explicitly, the required bounds are
\begin{equation}\label{eq:sampling-hypotheses}
\begin{aligned}
 \Prob\{\norm{\theta_-^{-1}X^TE_-X-I_r}>\alpha\}
          &\le\gamma,\\
 \Prob\{\norm{\theta_+^{-1}X^TE_+X-I_r}>\alpha\}
          &\le\gamma\quad\text{if }q_+<1.
\end{aligned}
\end{equation}
Each $E_\pm$ used here is a Bernoulli diagonal projection of the
indicated density, independent of $X$. These are marginal statements;
$E_-$ and $E_+$ need not be independent of one another.
When $q_+\ge1$, no upper Bernoulli hypothesis is required.
An independent uniform \(k\)-element sample then satisfies
\begin{equation}\label{eq:sampling-bound}
 \Prob\left\{\norm{\frac nk X^TS_JS_J^TX-I_r}>\varepsilon\right\}
 \le2\gamma+2\exp(-\varepsilon^2 k/48).
\end{equation}
\end{lemma}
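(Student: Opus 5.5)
We couple the three samples through one ordering. Draw i.i.d.\ uniform variables $U_i\in[0,1]$, $i\in K$, independent of $X$, and set
\[
 E_-=E_{\{i:U_i\le\theta_-\}},\qquad E_+=E_{\{i:U_i\le\theta_+\}}\ \ (q_+<1).
\]
Let $J$ consist of the indices of the $k$ smallest $U_i$. Then $J$ is a uniform $k$-subset independent of $X$, and each $E_\pm$ is a Bernoulli projection of the indicated density independent of $X$. Write $N_\pm=\tr E_\pm$ for the Bernoulli sample sizes. On the event $\{N_-\le k\}$ the set $\{U_i\le\theta_-\}$ lies in $J$, so $E_-\preceq E_J$. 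Likewise $\{N_+\ge k\}$ gives $E_J\preceq E_+$. Since conjugation by $X$ preserves the Loewner order, on these events
\[
 \frac{\theta_-}{k/n}\,\theta_-^{-1}X^TE_-X\ \preceq\ \frac nk X^TE_JX\ \preceq\ \frac{\theta_+}{k/n}\,\theta_+^{-1}X^TE_+X .
\]

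On the good Bernoulli events, both normalized Gram matrices lie between $(1-\alpha)I_r$ and $(1+\alpha)I_r$. The lower side then gives $\frac nkX^TE_JX\succeq(1-\alpha)^2I_r\succeq(1-2\alpha)I_r$. The upper side gives $\preceq(1+\alpha)^2I_r\preceq(1+3\alpha)I_r$ since $\alpha<1/4$. Both deviations are at most $3\alpha<\varepsilon$.

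When $q_+\ge1$, the upper edge is handled deterministically. Orthonormality gives $X^TE_JX\preceq X^TX=I_r$, so
\[
 \frac nkX^TE_JX\preceq\frac nkI_r\preceq(1+\alpha)I_r ,
\]
because $n/k\le1+\alpha$. No upper Bernoulli event and no upper count are needed in that case.

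The failure probability is bounded by a union bound:
\[
 \gamma+\gamma+\Prob\{N_->k\}+\Prob\{N_+<k\}.
\]
Here $N_-\sim\mathrm{Bin}(n,\theta_-)$ has mean $\mu_-=(1-\alpha)k$, and $N_+$ has mean $\mu_+=(1+\alpha)k$. The multiplicative Chernoff bounds give
\[
 \Prob\{N_->k\}\le\exp\!\bigl(-\mu_-\eta_-^2/3\bigr),\qquad \eta_-=\frac{\alpha}{1-\alpha}\ge\alpha,\quad \mu_-\ge \frac{3k}{4},
\]
so this term is at most $\exp(-\alpha^2k/4)$. Similarly,
\[
 \Prob\{N_+<k\}\le\exp\!\bigl(-\mu_+\eta_+^2/2\bigr),\qquad \eta_+=\frac{\alpha}{1+\alpha}\ge\frac{4\alpha}{5},
\]
so this term is at most $\exp(-8\alpha^2k/25)$. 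With $\alpha=\varepsilon/4$, both are at most $\exp(-\varepsilon^2k/48)$. The main obstacle is purely bookkeeping: the Chernoff constants must actually deliver the exponent $\varepsilon^2/48$. The lower-tail case and the upper-tail case need separate checks, using $\alpha\le1/4$ in each.
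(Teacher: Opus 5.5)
Your argument follows the paper's proof step for step: the common-ordering coupling, the sandwich with factors $1\mp\alpha$, the deterministic upper edge when $q_+\ge1$, and a union bound with binomial count tails. The step that fails is the final exponent check, which you yourself flagged as the crux. Your intermediate bounds are $\exp(-\alpha^2k/4)=\exp(-\varepsilon^2k/64)$ and $\exp(-8\alpha^2k/25)=\exp(-\varepsilon^2k/50)$. Both are strictly larger than $\exp(-\varepsilon^2k/48)$, so the assertion that ``both are at most $\exp(-\varepsilon^2k/48)$'' does not follow from what you derived. As written, your proof only yields $2\gamma+2\exp(-\varepsilon^2k/64)$. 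That weaker bound would still suffice for Theorem~\ref{thm:main}, but it does not give the lemma's stated constant.

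The loss comes from bounding $\eta_\pm$ and $\mu_\pm$ separately. For $N_-$, the multiplicative form of Chernoff has essentially no slack, so compute the product instead. The factor $1-\alpha$ cancels, giving $\mu_-\eta_-^2=\alpha^2k/(1-\alpha)\ge\alpha^2k$. Since $\eta_-<1/3\le1$, Chernoff applies and gives $\Prob\{N_->k\}\le\exp(-\alpha^2k/(3(1-\alpha)))\le\exp(-\alpha^2k/3)=\exp(-\varepsilon^2k/48)$. For the other count, $\mu_+\eta_+^2/2=\alpha^2k/(2(1+\alpha))\ge2\alpha^2k/5\ge\alpha^2k/3$.

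With this correction your proof is complete and coincides with the paper's. The paper instead derives Bernstein-type tails $e^{-a^2/(2\mu+2a/3)}$ and $e^{-a^2/(2\mu)}$ from the exponential moment. Those give the roomier exponents $\alpha^2k/2$ and $\alpha^2k/3$ for the two counts.
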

\begin{proof}
The proof uses a common-ordering realization of the nested coupling between
Bernoulli and fixed-size uniform samples described by H\'ajek
\cite[Section~2, pp.~362--363]{hajek_sampling}.
The lower threshold is always in $(0,1)$; an upper Bernoulli sample
is needed only when $q_+<1$.
Take independent uniform \(U_i\) on \([0,1]\), independent of \(X\).
Let \(J\) index the \(k\) smallest values, and put
\(B_-=\{i:U_i\le\theta_-\}\), \(N_-=|B_-|\).
When $q_+<1$, define $B_+$ and $N_+$ in the same way using $\theta_+$.
There are almost surely no ties. Exchangeability makes every
$k$-element set equally likely to be $J$, and $J$ is independent
of $X$ because it depends only on the $U_i$. For each threshold,
the indicators are independent Bernoulli variables with the required
mean $\theta_-$ or $\theta_+$.

On $N_-\le k$, every value below the lower threshold is among
the first $k$, so $B_-\subseteq J$. Positive semidefinite order gives
\[
 (1-\alpha)\theta_-^{-1}X^TE_{B_-}X
       \preceq \frac nkX^TE_JX,
\]
because $(1-\alpha)\theta_-^{-1}=n/k$. On the lower Gram success
event in \eqref{eq:sampling-hypotheses}, this implies
\[
 (1-\alpha)^2I_r\preceq\frac nkX^TE_JX.
\]

If $q_+<1$, the count event $k\le N_+$ gives $J\subseteq B_+$.
On the upper Gram success event, the corresponding comparison is
\[
 \frac nkX^TE_JX
 \preceq (1+\alpha)\theta_+^{-1}X^TE_{B_+}X
 \preceq (1+\alpha)^2I_r.
\]
If $q_+\ge1$, including equality, $E_J\preceq I_n$ and $X^TX=I_r$
instead give the deterministic bound
\begin{equation}\label{eq:saturated-upper-bound}
 \frac nkX^TE_JX\preceq\frac nkI_r
       \preceq(1+\alpha)I_r\preceq(1+\alpha)^2I_r.
\end{equation}
Here $n/k\le1+\alpha$ is exactly the condition $q_+\ge1$.
Thus, in either case, the successful comparisons put the spectrum
between \((1-\alpha)^2\) and \((1+\alpha)^2\). These endpoints are within
$1\pm\varepsilon$, because $\alpha=\varepsilon/4$ and
$0<\varepsilon<1$ give
\[
 (1-\alpha)^2\ge1-2\alpha\ge1-\varepsilon,
 \qquad
 (1+\alpha)^2=1+\frac\varepsilon2+\frac{\varepsilon^2}{16}
                   \le1+\frac{9\varepsilon}{16}\le1+\varepsilon.
\]
It remains to bound the relevant count failures.

For a binomial \(N\) of positive mean \(\mu\) and $a>0$, the tail bounds are
\begin{equation}\label{eq:binomial-tails}
 \Prob(N-\mu\ge a)\le e^{-a^2/(2\mu+2a/3)},\qquad
 \Prob(N-\mu\le-a)\le e^{-a^2/(2\mu)}.
\end{equation}
Here is the exponential-moment calculation. If
$N\sim\operatorname{Bin}(n,\beta)$, so $\mu=n\beta$, independence
and $1+u\le e^u$ give
\[
 \E e^{t(N-\mu)}
   =e^{-t\mu}(1-\beta+\beta e^t)^n
   \le\exp\bigl(\mu(e^t-1-t)\bigr).
\]
For $0<t<3$, the factorial bound $j!\ge2\cdot3^{j-2}$,
$j\ge2$, gives the termwise estimate
\[
 e^t-1-t=\sum_{j\ge2}\frac{t^j}{j!}
       \le\frac{t^2}{2}\sum_{j\ge0}(t/3)^j
       =\frac{t^2}{2(1-t/3)}.
\]
Markov's inequality consequently yields
\[
 \Prob(N-\mu\ge a)
   \le\exp\left(-ta+\frac{\mu t^2}{2(1-t/3)}\right).
\]
Choose $t=a/(\mu+a/3)$, which lies in $(0,3)$. Substitution makes
the exponent $-a^2/(2\mu+2a/3)$, proving the upper-tail bound.
For the lower tail, apply the same moment bound with $-t$ and use
\[
 e^{-t}-1+t=\int_0^t(1-e^{-u})\,du
          \le\int_0^t u\,du=\frac{t^2}{2}\qquad(t\ge0).
\]
Thus
\[
 \Prob(N-\mu\le-a)\le\exp(-ta+\mu t^2/2).
\]
Taking $t=a/\mu$ gives the lower-tail bound in
\eqref{eq:binomial-tails}.

For $N_-$, use $\mu=(1-\alpha)k$ and $a=\alpha k$; when
$q_+<1$, use $\mu=(1+\alpha)k$ and the same $a$ for $N_+$.
The bounds give
\[
 \begin{aligned}
 \Prob(N_->k)
   &\le\exp\left(-\frac{\alpha^2k}{2(1-\alpha)+2\alpha/3}\right)
      \le e^{-\alpha^2k/2},\\
 \Prob(N_+<k)
   &\le\exp\left(-\frac{\alpha^2k}{2(1+\alpha)}\right)
      \le e^{-\alpha^2k/3}\qquad(q_+<1),
 \end{aligned}
\]
since $\alpha<1/4$. When $q_+<1$, the count failures sum to at most
$2e^{-\alpha^2k/3}$ and the two Gram failures to at most $2\gamma$.
When $q_+\ge1$, only the lower count and lower Gram can fail,
with total probability at most $e^{-\alpha^2k/2}+\gamma$.
The common bound is therefore
\[
 \Prob\left\{\norm{\frac nkX^TE_JX-I_r}>\varepsilon\right\}
       \le2\gamma+2e^{-\alpha^2k/3}
       =2\gamma+2e^{-\varepsilon^2k/48},
\]
which is \eqref{eq:sampling-bound}. Only the marginal Gram
guarantees stated for the relevant densities enter this union bound.
\end{proof}

\subsection{A second-moment estimate for the remaining ranks}

The high-order trace argument is unnecessary for bounded ranks.
The next estimate averages over both sign diagonals and the uniform
sample. Its quadratic dependence on $r$ will be absorbed into the
universal constant once the rank cutoff has been fixed. The
Frobenius norm is estimated because its
expectation can be expanded using pairs of sampled rows. The operator
norm satisfies $\norm{M}\le\|M\|_F$, so this also bounds the
spectral error needed in Theorem~\ref{thm:main}.

\begin{lemma}\label{lem:small}
For every deterministic orthonormal frame \(V\), let \(X=HD_2HD_1V\)
and let $1\le k\le n$ be an integer. Let \(J\) be an independent
uniform \(k\)-element subset. Then
\begin{equation}\label{eq:small-rank-moment}
 \E_{D_1,D_2,J}\left\|\frac nkX^TS_JS_J^TX-I_r\right\|_F^2
                                      \le\frac{r(r+1)}k.
\end{equation}
Consequently, for every $\varepsilon>0$,
\begin{equation}\label{eq:small-rank-tail}
 \Prob\left\{\norm{\frac nkX^TS_JS_J^TX-I_r}>\varepsilon\right\}
       \le\frac{r(r+1)}{k\varepsilon^2}.
\end{equation}
\end{lemma}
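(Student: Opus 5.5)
The plan is an exact second-moment computation: average over the uniform sample $J$ first with $X$ fixed, then over $D_2$ conditionally on $D_1$. The tail bound \eqref{eq:small-rank-tail} then follows from $\norm{M}\le\|M\|_F$ and Markov's inequality applied to \eqref{eq:small-rank-moment}. The case $n=1$ forces $k=r=1$ and a zero error, so assume $n\ge2$.

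\emph{Averaging over the sample.} Let $x_i^T$ denote the rows of $X$ and write $P=XX^T$, so that $P_{ij}=x_i^Tx_j$. Then
\[
 \Bigl\|\tfrac nkX^TE_JX-I_r\Bigr\|_F^2
 =\tfrac{n^2}{k^2}\sum_{i,j\in J}P_{ij}^2-\tfrac{2n}{k}\sum_{i\in J}P_{ii}+r .
\]
I will use the inclusion probabilities $\Prob(i\in J)=k/n$ and $\Prob(i,j\in J)=k(k-1)/(n(n-1))$ for $i\ne j$, together with $\sum_{i,j}P_{ij}^2=\tr P^2=r$ and $\tr P=r$. A short simplification should give the identity
\[
 \E_J\Bigl\|\tfrac nkX^TE_JX-I_r\Bigr\|_F^2
 =\frac{n-k}{k(n-1)}\Bigl(n\sum_{i}P_{ii}^2-r\Bigr).
\]
Since $(n-k)/(n-1)\le1$, it then suffices to show $n\sum_i\E P_{ii}^2\le r^2+2r$.

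\emph{The diagonal second moment.} Fix $i$ and write $B=HD_1P_0D_1H$, where $P_0=VV^T$. The matrix $B$ is a rank-$r$ orthogonal projection with $\tr B=r$ and $\tr B^2=r$. We have $P=HD_2BD_2H$, hence
\[
 P_{ii}=n^{-1}\sum_{a,b}\chi_i(a+b)\,\xi_{2,a}\xi_{2,b}B_{ab}.
\]
Conditioning on $D_1$, I will expand $P_{ii}^2$ and keep only the sign patterns with nonzero mean. These are $a=b$ together with $c=d$, which gives $(\tr B)^2=r^2$. They also include the two pairings $\{a,b\}=\{c,d\}$ with $a\ne b$, where the characters cancel because $\chi_i(a+b)^2=1$. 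This should give
\[
 \E_{D_2}P_{ii}^2=n^{-2}\Bigl(r^2+2\sum_{a\ne b}B_{ab}^2\Bigr)\le n^{-2}(r^2+2\tr B^2)=n^{-2}(r^2+2r).
\]
Summing over the $n$ values of $i$ and multiplying by $n$ gives $n\sum_i\E P_{ii}^2\le r^2+2r$. Hence the expected squared Frobenius error is at most $(r^2+r)/k$, which is \eqref{eq:small-rank-moment}.

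\emph{Expected difficulties.} The main care point is bookkeeping: the sign-pattern expansion must count the diagonal pairing $a=b$, $c=d$ exactly once, so that no spurious constant appears. I will not invoke the crude constant $(2q)^{12q}$ of Lemma~\ref{lem:cumulants} here, since it would lose the sharp factor $r(r+1)$. The other delicate step is verifying the exact $J$-average identity, in particular the sign of the coefficient $n(n-k)/(k(n-1))\ge0$ in front of $\sum_iP_{ii}^2$.
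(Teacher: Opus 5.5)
Your proposal is correct and follows the paper's proof essentially step for step: the same exact $J$-average identity $\frac{n-k}{k(n-1)}\bigl(n\sum_i P_{ii}^2-r\bigr)$ (with $P_{ii}=\|x_i\|^2$), then a conditional Rademacher fourth-moment computation over $D_2$ given $D_1$, then Markov. The only cosmetic difference is that the paper first absorbs the deterministic signs $\sqrt n H_{ij}$ into the Rademacher variables and computes $\E\|\sum_j\xi_j u_j\|^4=r^2+2r-2\sum_j\|u_j\|^4$, whereas you keep the characters, note that they square away in the surviving pairings, and obtain the same quantity in the form $r^2+2\sum_{a\ne b}B_{ab}^2$.
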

\begin{proof}
The case \(n=1\), and every case \(k=n\), is exact.
For \(n>1\), let \(x_i^T\) be the rows of \(X\), and write
\(G=(n/k)X^TS_JS_J^TX\). The expectation is first taken over $J$ with $X$
fixed; the fourth moments of the rows are then bounded by averaging over
the sign diagonals.

A uniform $k$-element sample has inclusion probabilities
\[
 \Prob(i\in J)=\frac kn,\qquad
 \Prob(i,j\in J)=\frac{k(k-1)}{n(n-1)}\quad(i\ne j).
\]
Since $G=(n/k)\sum_{i\in J}x_ix_i^T$ and
$\sum_i x_ix_i^T=X^TX=I_r$, the first identity gives
$\E[G\mid X]=I_r$. Also,
\[
 \sum_{i\ne j}(x_ix_i^T)(x_jx_j^T)
       =I_r-\sum_i\|x_i\|^2x_ix_i^T.
\]
This follows by expanding $(\sum_i x_ix_i^T)^2$ and removing the
terms with $i=j$. Taking traces, the diagonal terms sum to
$\sum_i\|x_i\|^4$, while the off-diagonal terms sum to
$r-\sum_i\|x_i\|^4$. Applying the two inclusion probabilities to
$\tr G^2$ therefore yields
\[
 \begin{aligned}
 \E[\|G-I_r\|_F^2\mid X]
   &=\E[\tr G^2\mid X]-r\\
   &=\left(\frac nk\right)^2
     \left[\frac kn\sum_i\|x_i\|^4
       +\frac{k(k-1)}{n(n-1)}
          \left(r-\sum_i\|x_i\|^4\right)\right]-r.
 \end{aligned}
\]
The first equality uses symmetry of $G$ and $\E[G\mid X]=I_r$.
Collecting the row-moment and constant terms gives
\begin{equation}\label{eq:conditional-sampling-moment}
 \E\!\left[\|G-I_r\|_F^2\mid X\right]
  =\frac{n-k}{k(n-1)}
                \left(n\sum_i\|x_i\|^4-r\right).
\end{equation}

Condition on $D_1$, which fixes $U=HD_1V$, with rows $u_j^T$.
Then $U^TU=I_r$ and $X=HD_2U$. Recalling
$\xi_{2,j}=(D_2)_{jj}$, the $i$th row satisfies
\[
 \sqrt n\,x_i=\sum_j(\sqrt n H_{ij})\xi_{2,j}u_j.
\]
For fixed $i$, the factors $\sqrt n H_{ij}$ are deterministic
signs, so multiplying by them preserves the independent Rademacher
law. Thus each $\sqrt n\,x_i$ has the law of $\sum_j\xi_j u_j$,
where the $\xi_j$ are independent uniform signs.

To compute its fourth moment, expand
\[
 \left\|\sum_j\xi_j u_j\right\|^4
   =\sum_{a,b,c,d}\xi_a\xi_b\xi_c\xi_d
                    \langle u_a,u_b\rangle\langle u_c,u_d\rangle.
\]
A sign product has zero expectation unless each index occurs an
even number of times. The three pairings account for all such
patterns, but count the all-equal pattern three times. Subtracting
it twice gives the exact identity
\[
 \E[\xi_a\xi_b\xi_c\xi_d]
   =\one_{\{a=b\}}\one_{\{c=d\}}
     +\one_{\{a=c\}}\one_{\{b=d\}}
     +\one_{\{a=d\}}\one_{\{b=c\}}
     -2\one_{\{a=b=c=d\}}.
\]
The first pairing contributes $(\sum_j\|u_j\|^2)^2=r^2$.
Each of the other two contributes
\[
 \sum_{a,b}\langle u_a,u_b\rangle^2
    =\tr\bigl((U^TU)^2\bigr)=r.
\]
The all-equal term contributes $\sum_j\|u_j\|^4$. Consequently
\[
 \E_{\xi}\left\|\sum_j\xi_j u_j\right\|^4
             =r^2+2r-2\sum_j\|u_j\|^4\le r^2+2r.
\]
Each row has conditional fourth moment at most $(r^2+2r)/n^2$.
Thus the row-moment
term in \eqref{eq:conditional-sampling-moment} satisfies
\[
 \E_{D_2}\!\left[n\sum_i\|x_i\|^4\,\middle|\,D_1\right]
       \le r^2+2r.
\]
Average \eqref{eq:conditional-sampling-moment} over $D_2$ using
this bound, and then over $D_1$. Since $(n-k)/(n-1)\le1$,
\[
 \E\|G-I_r\|_F^2
    \le\frac{n-k}{k(n-1)}(r^2+2r-r)
    \le\frac{r(r+1)}k,
\]
which proves \eqref{eq:small-rank-moment}.

Finally, norm domination and Markov's inequality give
\[
 \Prob\{\norm{G-I_r}>\varepsilon\}
    \le\frac{\E\|G-I_r\|_F^2}{\varepsilon^2}
    \le\frac{r(r+1)}{k\varepsilon^2}.
\]
This proves \eqref{eq:small-rank-tail} at every sample size $1\le k\le n$.
\end{proof}
\subsection{Choice of moment order and completion}

Small ranks are handled by the second-moment bound in
Lemma~\ref{lem:small}; large ranks are handled by combining
Lemmas~\ref{cor:bernoulli} and~\ref{lem:sampling}.
The width is fixed by \eqref{eq:prescribed-width} throughout;
full sampling is exact whenever that formula gives $k=n$.

The universal constant can be made explicit. With $K_0$ as in
Proposition~\ref{prop:trace}, choose
\begin{equation}\label{eq:universal-constants}
 K_0=576\cdot4^{24},\qquad R_0=2^{20000},\qquad
 C=200R_0+8196K_0.
\end{equation}
These constants establish a bound uniform in all parameters; they
are not intended as practical oversampling prescriptions.

\begin{proof}[Proof of Theorem~\ref{thm:main}]
Fix $n,r,\varepsilon$ as in the statement, the constants in
\eqref{eq:universal-constants}, and
$k=\min\{n,\lceil Cr/\varepsilon^2\rceil\}$.
If $k=n$, then $S_JS_J^T=I_n$, and orthonormality of $X$ makes
\eqref{eq:sampled-gram} equal to $I_r$ for every draw.
This also covers $n=1$. Henceforth $k<n$, so
\begin{equation}\label{eq:nonfull-width}
 k=\lceil Cr/\varepsilon^2\rceil\ge Cr/\varepsilon^2.
\end{equation}
In particular, $k\ge r$ and $r/n<\varepsilon^2/C<1/2$.
Write $G=(n/k)X^TS_JS_J^TX$.

Suppose first that $r<R_0$. Apply Lemma~\ref{lem:small} at this
same $k$. Its tail bound gives
\[
 \Prob\{\norm{G-I_r}>\varepsilon\}
 \le\frac{r(r+1)}{k\varepsilon^2}
 \le\frac{r+1}{C}
 \le\frac{2R_0}{200R_0}=0.01.
\]
Here $r+1\le2R_0$ and $C\ge200R_0$ suffice; the estimate holds
at the prescribed width without any further choice of sample size.

Now suppose $r\ge R_0$. The first step is to verify that the
integer \(p=\lceil\log_2(3000r)\rceil\) satisfies the rank condition
of Lemma~\ref{cor:bernoulli}. Set $x=\log_2r\ge20000$.
Since $3000<2^{12}$ and taking the ceiling increases a number by
less than one, $p\le x+13$, so $4p+1\le4x+53$.
The desired comparison follows if the function
\[
 f(x)=x-1-1000\log_2(4x+53)
\]
is positive. At $x=20000$, the inequality $80053<2^{17}$ gives
\[
 f(20000)>20000-1-1000\cdot17=2999>0.
\]
For all $x\ge20000$ it is increasing:
\[
 f'(x)=1-\frac{4000}{(4x+53)\log2}>0,
\]
because $\log2>1/2$ makes the denominator greater than
$80053/2>4000$. Exponentiating the inequality $f(x)>0$ now gives
\[
 r=2^x>2(4x+53)^{1000}\ge2(4p+1)^{1000}.
\]
Thus the rank condition holds throughout this regime.

Set $\alpha=\varepsilon/4$, $\theta_-=(1-\alpha)k/n$, and
$q_+=(1+\alpha)k/n$, as in Lemma~\ref{lem:sampling}.
The lower density belongs to $(0,1)$. Whenever $q_+<1$, the
upper density $\theta_+=q_+$ also belongs to $(0,1)$.
Every Bernoulli density needed in that lemma has effective width at least
\begin{equation}\label{eq:bernoulli-widths}
 n\theta_-=(1-\alpha)k\ge\frac34 k
 \ge\frac{1536K_0r}{\varepsilon^2}
 \ge\frac{64K_0r}{(\varepsilon/4)^2}.
\end{equation}
The second inequality involving $K_0$ follows from
\eqref{eq:nonfull-width} and $C\ge2048K_0$; the last threshold is
$1024K_0r/\varepsilon^2$. Thus the moment order, rank condition,
density range, and effective-width condition of
Lemma~\ref{cor:bernoulli} all hold with $\eta=\alpha$.
Each required Bernoulli failure probability is at most $1/1000$,
including when the density exceeds $1/2$.
If $q_+\ge1$, the upper spectral edge is instead controlled by
\eqref{eq:saturated-upper-bound}, including the boundary $q_+=1$.
Lemma~\ref{lem:sampling} therefore applies in both cases with
$\gamma=1/1000$ and gives
\begin{equation}\label{eq:final-failure}
 \Prob\{\norm{G-I_r}>\varepsilon\}
 \le\frac2{1000}+2e^{-\varepsilon^2k/48}<0.01.
\end{equation}
Indeed, \eqref{eq:nonfull-width} gives
$\varepsilon^2k/48\ge2048K_0r/48>42$, and
$e^{42}\ge1+42+42^2/2=925>250$. Hence the right side is strictly
below $2/1000+2/250=0.01$.

Every case uses the same prescribed $k$ and the same constant $C$.
These depend only on $n,r,\varepsilon$ and satisfy $r\le k\le n$.
The failure bound holds for every deterministic orthonormal frame $V$
fixed before the signs and sample, so taking the supremum over those
frames preserves it. Finally, \eqref{eq:sampled-gram} identifies
$G$ with the Gram matrix in \eqref{eq:main-bound}, completing the proof.
\end{proof}

\section{Concluding remarks}\label{sec:conclusion}

The quantitative estimates here prioritize a bound uniform in all
parameters. It remains to determine whether sharper trace and counting
estimates yield oversampling constants useful in practice. A separate
structural question is how far the argument extends beyond the Walsh
model. Within the present proof strategy, the ingredients to replace
are the joint entry-cumulant estimates and the character constraints
used here.

\section*{Formalization and verification}
A Lean~4 formalization of the prescribed-width theorem and its
supporting results is available in the
\href{https://github.com/yuningyang19/OpenProblemsInNLA_TR-01/tree/ed21181197ac839eac95f549404f94e7e3aa6e10/lean}{current companion repository},
together with manuscript-to-Lean correspondence records and
reproduction instructions. On 12 September 2026, the resolution was
incorporated into the Open Problems in Numerical Linear Algebra
collection, where
\href{https://github.com/ajt60gaibb/OpenProblemsInNLA/blob/5adea969c17391693978ada2674d25bb5c3daeb1/randomized-and-low-rank-approximation/TR-01/README.md}{TR-01 is listed}
as ``Lean verified---complete affirmative resolution.'' The entry
links mathematical, statement-comparison, and Lean-verification
reviews conducted by independent Codex agents.

\begin{small}
\section*{Acknowledgments}
The author thanks Prof.~Alex Townsend for maintaining the Open Problems
in Numerical Linear Algebra collection and for incorporating the
resolution of TR-01 into the repository.
ChatGPT 6.0 Astra was used throughout this project for exploration of
proof strategies, development and refinement of mathematical arguments,
manuscript preparation, and Lean 4 formalization.
\end{small}

\end{document}

%% file: figures/p7_contraction.tex
\begin{figure}[htbp]
\centering
\begin{minipage}[t]{0.47\textwidth}
\centering
\begin{tikzpicture}[
  vertex/.style={circle,draw,fill=white,minimum size=13pt,inner sep=1pt},
  removed/.style={vertex,fill=black!8},
  doubled/.style={draw=blue!65!black,line width=0.5pt,
                  double=white,double distance=1.4pt},
  bundled/.style={draw=black!55,line width=1.4pt},
  every node/.style={font=\small}]
\node[vertex] (b) at (0,0) {$b$};
\node[removed] (x) at (-0.7,1.4) {$x$};
\node[removed] (y) at (0.9,1.4) {$y$};
\node[vertex] (c) at (2.7,0) {$c$};
\node[removed] (z) at (1.35,-1.1) {$z$};
\draw[doubled] (b) -- (x) -- (y) -- (b);
\draw[doubled] (b) -- (z) -- (c);
\draw[bundled] (b) -- node[above=3pt,text=black] {$4$} (c);
\path[use as bounding box] (-1.1,-1.5) rectangle (3.1,2.0);
\end{tikzpicture}

\small (a) Before suppression
\end{minipage}\hfill
\begin{minipage}[t]{0.50\textwidth}
\centering
\begin{tikzpicture}[
  vertex/.style={circle,draw,fill=white,minimum size=13pt,inner sep=1pt},
  doubled/.style={draw=blue!65!black,line width=0.5pt,
                  double=white,double distance=1.4pt},
  bundled/.style={draw=black!55,line width=1.4pt},
  every node/.style={font=\small}]
\node[vertex] (b) at (0,0) {$b$};
\node[vertex] (c) at (2.7,0) {$c$};
\coordinate (leftport) at (-0.16,0.16);
\coordinate (rightport) at (0.16,0.16);
\draw[doubled] (leftport)
  .. controls (-1.05,2.0) and (1.05,2.0) ..
  node[pos=0.5,above=4pt] {length $2$} (rightport);
\draw[doubled] (b) to[bend right=43]
  node[midway,below=5pt] {length $1$} (c);
\draw[bundled] (b) -- node[pos=0.8,above=3pt,text=black] {$4$} (c);
\fill[blue!65!black] (leftport) circle (1.25pt);
\fill[blue!65!black] (rightport) circle (1.25pt);
\draw[thin] (leftport) -- (-0.48,0.48)
  node[anchor=east,font=\scriptsize] {$\{b_1,b_2\}$};
\draw[thin] (rightport) -- (0.48,0.48)
  node[anchor=west,font=\scriptsize] {$\{b_3,b_4\}$};
\path[use as bounding box] (-1.7,-1.5) rectangle (3.1,2.0);
\end{tikzpicture}

\small (b) After suppression
\end{minipage}
\caption{The graph of \eqref{eq:encoding-example-word} and its contraction.
Each blue double stroke represents two parallel edge occurrences;
each gray segment labeled $4$ represents four direct $b$--$c$ edges.
Suppressing $x,y$ gives the returning link of length two, with its two
ports marked at $b$. Suppressing $z$ gives the parallel link of length one.
Lengths count removed internal vertices.}
\label{fig:p7-contraction}
\end{figure}